\documentclass{amsart}
\usepackage{amsmath}
\usepackage{stackrel}
\usepackage{amssymb, MnSymbol}
\usepackage{color}
\usepackage{amscd}
\input xy
\xyoption{all}
\newtheorem{Lemma}{Lemma}[section]
\newtheorem{remark}[Lemma]{Remark}

\newtheorem{theorem}[Lemma]{Theorem}
\newtheorem{lemma}[Lemma]{Lemma}
\newtheorem{proposition}[Lemma]{Proposition}
\newtheorem{corollary}[Lemma]{Corollary}

\newtheorem{example}[Lemma]{Example}

\newcommand{\Cal}[1]{{\mathcal #1}}
\newcommand{\End}{\operatorname{End}}

\newcommand{\Hom}{\operatorname{Hom}}

\newcommand{\Mod}{\operatorname{Mod-\!}}

\DeclareMathOperator{\Add}{Add}

\newcommand{\soc}{\mbox{\rm soc}}

\newcommand{\cmat}{\left(\begin{array}}
\newcommand{\fmat}{\end{array}\right)}

\newcommand{\N}{\mathbb N}
\newcommand{\Z}{\mathbb{Z}}

\newcommand{\Q}{\mathbb{Q}}

   \begin{document}
   \title{Covering classes and uniserial modules}
     \author[Alberto Facchini]{Alberto Facchini}
\address{Dipartimento di Matematica, Universit\`a di Padova, 35121 Padova, Italy}
 \email{facchini@math.unipd.it}
\thanks{The first author was partially supported by Ministero dell'Istruzione, dell'Universit\`a e della Ricerca (Progetto di ricerca di rilevante interesse nazionale ``Categories, Algebras: Ring-Theoretical and Homological Approaches (CARTHA)'') and Dipartimento di Matematica ``Tullio Levi-Civita'' of Universit\`a di Padova (Research program DOR1828909 ``Anelli e categorie di moduli''). The second author was supported by a grant from  IPM. The third author was supported by the grant 
GA\v CR 17-23112S.\\
Parts of this paper were written while the first author was visiting the IPM or the
second author was visiting Charles University or the University of Padova. We all would like to thank these three institutions for their hospitality.}

 \author[Zahra Nazemian]{Zahra Nazemian}
\address{School of Mathematics, Institute for Research in Fundamental Sciences (IPM),
	P. O. Box: 19395-5746, Tehran, Iran}
 \email{z\_nazemian@yahoo.com}
\author[Pavel P\v r\'\i hoda]{Pavel P\v r\'\i hoda}
\address{Charles University in Prague, Faculty of Mathematics and Physics, Department of Algebra, Sokolovsk\'a 83, 186 75 Praha 8, Czech Republic}
 \email{prihoda@karlin.mff.cuni.cz}

   \keywords{Weakly generating set, Uniserial module, Covering class. \\ \protect \indent 2010 {\it Mathematics Subject Classification.} Primary 16P70, 16S50. Secondary 16W80.}

      \begin{abstract}  We apply minimal weakly
generating sets to study the existence of $\Add(U_R)$-covers for a uniserial module $U_R$. 
 If $U_R$ is a uniserial right module over a ring $R$, then $S:=\End (U_R)$ has at most two maximal (right, left, two-sided) ideals:
 one is 
the set $I$ of all endomorphisms that are not injective, and the other is the set $K $ of all endomorphisms of $U_R$ that are not surjective. 
We prove that if $U_R$ is either finitely generated, or artinian, or  $I \subset K$, then the class $\Add(U_R)$ is covering if and only if it is closed under direct limit. Moreover, we study endomorphism rings of artinian uniserial
modules giving several examples.
\end{abstract}

    \maketitle

\section{Introduction}

For a right module $M_R$ over a ring $R$ with identity, let $\Add(M_R)$ denote the class of all right $R$-modules that are isomorphic to a direct summand of a direct sum $M_R^{(I)}$ of copies of $M_R$. 
The class $\Add(M_R)$ is a precovering class for every module $M_R$, because it suffices to take, as an  $\Add(M_R)$-precover of a module $A_R$, the canonical mapping $M^{(\Hom(M_R,A_R))}\to A_R$, $(m_f)_{f}\mapsto \sum_ff(m_f)$. If a precovering class is closed under direct limit, then it is covering. That is, every module $A$ has an $\Add(M_R)$-cover. It is not known whether the converse holds (this is called the {\em limit conjecture} or {\em Enochs' conjecture}). 

Assume that $M_R$ is finitely generated and $\Add(M_R)$ consists only of modules isomorphic to direct sums of copies of $M$, e.g., $S:=\End(M_R)$ local \cite[Corollary~2.55]{libro}.  If $A_R$ is an arbitrary $R$-module, the problem of determining
whether $A_R$ has an $\Add(M_R)$-cover is determined by the structure of 
the $S$-module $\Hom(M_R,A_R)$. Indeed, it is easy to see that every generating set 
$X$ of $\Hom(M_R,A_R)$ defines a precover $f_X \colon M_R^{(X)} \to A_R$, 
$(m_x)_{x} \mapsto \sum_xx(m_x)$ and that, essentially, every $\Add(M_R)$-precover of $A_R$
arises in this way. Moreover, $f_X$ is a cover if and only if $\{\,g \in \End(M_R^{(X)}) \mid f_X g = f_X\,\}
\subseteq J(\End(M_R^{(X)}))$. Verifying this condition for a given generating set $X$
can be difficult, and sometimes it is convenient to consider the weaker condition that $X$ is a minimal generating set, because if $f_X$ is a cover, then $X$ is a minimal generating set for the right $S$-module $\Hom(M_R,A_R)$. So, if $\Hom(M_R,A_R)_S$ has no minimal generating set, 
then $A_R$ does not have an $\Add(M_R)$-cover. Modules with or without a minimal generating 
set have been studied in the literature, for example, modules over Dedekind domains with minimal generating 
sets were characterized in \cite{HR}.
When $M_R$ is not finitely generated, we must also consider minimal {\em weakly} generating 
sets of $\Hom(M_R,A_R)_S$. We introduce this notion in Section \ref{s2}.  

In order to illustrate the use of minimal generating sets, let us 
show how it is possible to prove  a very particular instance of \cite[Theorem~4.4]{procedinglondonlidia}.
Consider  
$M_R$ finitely presented with local endomorphism ring $S$ and suppose 
$J(S)$ not $T$-nilpotent, that is, there exists a sequence 
$$M\stackrel {f_1}{\to} M\stackrel{f_2}{\to} M \stackrel{f_3}{\to} \cdots $$
with $f_1,f_2,\dots \in J(S)$ such that the direct limit $L$ of the sequence is non-zero. 
Since $M$ is finitely presented, every element of $\Hom(M_R,L_R)$ factors through 
a colimit injection, and $\Hom(M_R,L_R) = \Hom(M_R,L_R)J(S)$ follows. By \cite[Lemma~3.1]{HR},
$\Hom(M_R,L_R)_S$ has no minimal generating set and hence $L_R$ does not have 
an $\Add(M_R)$-cover.

Conversely, assume that $M_R$ is a finitely presented module with local endomorphism ring $S$ whose Jacobson radical $J(S)$ is 
$T$-nilpotent.  Let $X$ be a subset of $\Hom(M_R,A_R)$
whose image in $\Hom(M_R,A_R)_S/\Hom(M_R,A_R)J(S)$ is a free generating set over $S/J(S)$.
Then it is possible to prove that $X$ is a minimal generating set for  
$\Hom(M_R,A_R)_S$. Moreover, $\{\,g \in \End(M_R^{(X)}) \mid f_X g = f_X\,\} \subseteq J(\End(M_R^{(X)}))$ because $J(S)$ is $T$-nilpotent \cite[Theorem~1]{Zelmanowitz}. So in this case $\Add(M_R)$ is covering. 

In this work, we study when $\Add(U_R)$-covers exist (or do not exist) for a uniserial module $U_R$.
The class $\Add(U_R)$ for a uniserial module $U_R$ was completely determined by the third author in \cite{Pavel}.
The class $\Add(U_R)$ is either {\em trivial}, that is, all its elements are isomorphic to $U_R^{(X)} $ for some suitable set $X$,  or there exists a uniserial module $V_R$ such that every element of the class is isomorphic to $U_R^{(X)} \oplus V_R^{(Y)}$ for suitable index sets  $X$ and $Y$. 
 
Unluckily, knowing $\Hom(U_R,A_R)_S$ is usually 
not sufficient to determine the existence of $\Add(U_R)$-cover of $A_R$. 
We prove that $A_R$ does not have an 
$\Add(U_R)$-cover when $A_R$ is either a factor of $U$, or 
a union of submodules isomorphic to $U$, or a product of copies of $U$ 
provided $U$ satisfies some additional conditions.

A related problem we consider in this paper is whether it is true that, for a uniserial module $U_R$, 
$\Add(U_R)$ is covering if and only if it is closed under direct limit. We verify Enochs' conjecture in three cases: $U_R$ finitely generated, $U_R$ artinian, and  $I \subset K$, that is, every surjective endomorphism of $U_R$ is an automorphism of $U_R$ but there exists an injective endomorphism of $U_R$ that is not an automorphism. See Sections \ref{3} and \ref{5}. 

Examples of uniserial modules $U$ for which the class $\Add(U)$ is covering are given by the uniserial modules $U$ for which 
every 
family of modules $\{\,U_i\mid {i\in I}\,\}\  (U_i = U$ for every $i\in I) $ is  locally  $T$-nilpotent (Section \ref{s2}). 
In the case of artinian uniserial modules,  there is a pair of ordinal numbers that completely characterizes a number of properties of 
$U$, for example when every family of modules $\{U_i\mid {i\in I}\}\  (U_i = U) $ is  locally  $T$-nilpotent,  or when 
 $U$ is self small. See Section \ref{4}. In Section \ref{6}, some examples of artinian uniserial modules and their endomorphism rings are given. 
 
 \smallskip
  
In the whole paper, $U_R$ denotes a uniserial right $R$-module and  $S=\End(U_R)$ is its  endomorphism ring.

\section{Preliminaries. Weakly generating sets.} \label{s2}

For any uniserial $R$-module $U_R\ne0$, the endomorphism ring $S:=\End(U_R)$ exactly has either one or two maximal (right, left, two-sided) ideals \cite{Facchinitransaction}. More precisely, $S$ always contains the two completely prime ideals $I:=\{\,f\in S\mid f$ non-monic$\,\}$ and $K:=\{\,f\in S\mid f$ non-epi$\,\}$. If $I$ and $K$ are comparable, then $S$ is local and its maximal ideal is the larger between $I$ and $K$. In this case, we say that $U_R$ is {\em of type $1$}. If $I$ and $K$ are not comparable, that is, if $I\nsubseteq K$ and $K\nsubseteq I$, then $I$ and $K$ are the two maximal (right, left, two-sided) ideals of $S$.
In this case, we say that $U_R$ is {\em of type $2$}. When $U_R$ is artinian uniserial, then $S$ is always local with maximal ideal~$I$.

\medskip

Recall that a family of modules $\{M_i\mid {i\in I}\}$ is said to be {\em locally semi-$T$-nilpotent} if, for each sequence of 
non-isomorphisms
 $M_{i_1}\stackrel{f_1} {\longrightarrow}  M_{i_2}\stackrel{f_2} {\longrightarrow} 
 M_{i_3} \stackrel{f_3} {\longrightarrow} \cdots $ with pairwise different indices $(i_n)_{n\in\N}$ from $I$, and 
each element $x \in M_{i_1}$, there exists $m = m(x) \in\N$ such that $f_mf_{m-1}\dots f_1(x) = 0$ \cite{lidia}. If the 
same condition is also satisfied when we allow repetitions in the sequence of indices $\{\,i_n \mid n\in\N\,\}$ involved, then 
the family $\{\,M_i\mid {i\in I}\,\}$ is said to be {\em locally $T$-nilpotent}. In most cases in this paper, all the modules $\{\,M_i\mid i
\in I\,\} $ in the family will be equal to a unique module $M$, in which case there is no difference
 between being locally $T$-nilpotent or locally semi-$T$-nilpotent. Also, for any infinite set $I$, the family $\{\,M_i\mid i\in I\,\}$,  where $M_i=M$ for every $i\in I$, is locally $T$-nilpotent if and only if the family $\{\,M_n\mid n\in \N\,\}$, where $M_n=M$ for every $n\in\N$, is locally $T$-nilpotent.

 If $U$ is a finitely presented uniserial module, then the class $\Add(U)$ is covering if and only if $\Add(U)$ is closed under direct limit 
\cite [Theorem 4.4]{procedinglondonlidia}. From the result in \cite {lidia}, we get a class of uniserial modules $U$ for which $\Add(U)$ is covering and closed under direct limit:

\begin{lemma}\label{localy}
Let $U$ be a uniserial module for which the family   $\{U_i\mid {i\in I}\}$, with $U_i=U$ for all $i\ge1$,
is locally $T$-nilpotent. Then:

{\rm (1)} $\End(U)$ is local  with 
maximal ideal $I$, the set of all endomorphisms of $U$ that are not monic.

{\rm (2)} $\Add(U)$ is covering. 

{\rm (3)}  $\Add(U)$ is closed under direct limit.\end{lemma}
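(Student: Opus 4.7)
The plan is to deduce (1) directly from the locally $T$-nilpotent hypothesis, to cite \cite{lidia} for (3), which is the technical heart of the lemma, and then to obtain (2) as a formal consequence of (3) together with the precover/cover principle recalled in the Introduction.

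For (1), I would show that the non-epi ideal $K$ is contained in the non-monic ideal $I$; combined with the type-$1$/type-$2$ dichotomy recalled at the start of Section~\ref{s2}, this will force $S$ to be local with unique maximal ideal $I$. Suppose, toward a contradiction, that some $f\in S$ is monic but not epi. Then $f$ is not an automorphism of $U$, so the constant sequence $U\stackrel{f}{\to}U\stackrel{f}{\to}U\stackrel{f}{\to}\cdots$ is a sequence of non-isomorphisms between copies of $U$. The local $T$-nilpotence hypothesis (which is meaningful precisely because repetitions of indices are allowed when all $U_i$ coincide) yields, for every $x\in U$, some $m=m(x)\in\N$ with $f^m(x)=0$. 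But $f$ monic implies $f^m$ monic, hence $x=0$. Since this holds for every $x\in U$, we conclude $U=0$, contradicting the standing assumption $U_R\ne0$. Therefore every monic endomorphism is epi, i.e.\ $K\subseteq I$, and (1) follows.

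Part (3) is the technical core, and here I would invoke \cite{lidia}: under local $T$-nilpotence of a family of modules whose members have local endomorphism ring (a setting into which we are placed by (1)), the class $\Add(U)$ is closed under direct limits. The genuine obstacle to a self-contained treatment lies exactly here --- a directed colimit of direct summands of modules $U^{(X_i)}$ is not a priori again of the form $U^{(X)}$, and local $T$-nilpotence is precisely the input that produces the decomposition/splitting needed to force such colimits back into $\Add(U)$. Finally, (2) is then purely formal: the Introduction records both that $\Add(U_R)$ is always a precovering class and that any precovering class closed under direct limits is covering, so (3) yields (2).
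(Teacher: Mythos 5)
Your proof is correct. Parts (1) and (3) track the paper's argument closely: for (1) you use the same constant-sequence trick (the paper phrases it as showing the family fails local $T$-nilpotence; you flesh out why: $f$ monic forces $f^m(x)=0\Rightarrow x=0$, hence $U=0$), and for (3) you invoke the decomposition results of \cite{lidia}, just as the paper does via \cite[Theorems 1.1 and 1.4]{lidia}.

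For (2) you take a genuinely different, and arguably cleaner, route. The paper cites \cite[Propositions 4.2 and 4.1]{procedinglondonlidia} to get covering directly, so in the paper's layout (2) does not logically depend on (3). You instead observe that $\Add(U)$ is always precovering and that any precovering class closed under direct limits is covering — both facts recorded in the paper's Introduction — and deduce (2) as a formal corollary of (3). This makes the dependency order (1) $\Rightarrow$ input to (3) $\Rightarrow$ (2) explicit and avoids a second external citation; the cost is that (2) is no longer independent of (3), which is harmless here but worth noticing if one ever wanted (2) under weaker hypotheses than those needed for (3).
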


\begin{proof} (1) If $\End(U)$ is not local with maximal ideal $I$, then there exists $f\colon U\to U$ monic but not epi. Then the sequence $U\stackrel{f} {\longrightarrow}  U\stackrel{f} {\longrightarrow}  U\stackrel{f} {\longrightarrow}   \cdots $ shows that the family is not locally $T$-nilpotent.  This proves that every monic is epi, that is, $I$ is the maximal ideal of $S$. 

(2) follows from \cite[Propositions~4.2 and~4.1]{procedinglondonlidia}.

(3) 
Set  $M := \oplus _{i = 1} ^{\infty}  U_i$, where $U_i = U$. Then 
$M$  satisfies Condition 2, and so 3, of \cite [Theorem 1.1]{lidia}. 
Thus, by \cite [Theorem  1.4]{lidia}, $\Add(M)$, and hence $\Add(U)$, are closed under direct limit. 
\end{proof}

\smallskip

Let $N_R$ and $M_R$ be two $R$-modules. 
A family $\{\,s_x\mid x\in X\,\}$ of elements of  $S = \End (N_R)$ is said to be {\em summable} if, for every $n\in N_R$,  $s_x(n) \neq 0$ only for finitely many $x \in X$. Thus a family $\{\,s_x\mid x\in X\,\}$ of elements of  $S = \End (N_R)$ determines  a morphism $\varphi\colon N_R\to N_R^X$, and the family $\{\,s_x\mid x\in X\,\}$ is summable if and only if the image of $\varphi$ is contained in $N_R^{(X)}$.
We say that $X \subseteq \Hom(N, M)$  {\em weakly generates} $\Hom(N, M)$ if 
any element $g \in \Hom(N, M)$  can be written as $\sum _{x\in X} x s_x $ for some 
summable family $\{\,s_x\mid x \in X\,\}$ of elements in $S$ (in this case we say that $g$ is {\em weakly generated }by $X$).
 Also, we say that $X$ is a {\em minimal weakly generating set}  for  $\Hom(N, M)$ 
 if no element $x$ of $X$ can be weakly generated by $X\setminus\{x\}$.  

Clearly, if  $X$  generates $\Hom(N, M)$,
 then it weakly generates $\Hom(N, M)$.
Moreover, if $X \subseteq Y$ and $X$ weakly generates  $\Hom(N, M)$, then $Y$  
weakly generates  $\Hom(N, M)$ too. 
We say that {\em $\Add(N)$ is trivial} if every module in $\Add(N)$ is isomorphic to a direct sum of copies of $N$ \cite{Pavel}. For instance, $\Add(U)$ is trivial for every uniserial module $U$ of type $1$ \cite[Theorem~2.52]{libro}.

\begin{theorem} \label{mgset}
Let $M$ and $N$ be two right $R$-modules. 

{\rm (1)} If   $\Add(N)$ is trivial and $X $ is a weakly generating set for $\Hom (N, M)$, then the map 
$f \colon N^{(X)} \to M$ 
which sends $(n_x)_{x \in X}$ to $\sum_{x \in X} x(n_x)$ is an $\Add(N)$-precover of $M$. 

{\rm (2)} If $f\colon N^{(X)} \to M$ is an $\Add(N)$-precover for $M$ and the mappings 
 $i_x \colon N \to N^{(X)}$, $x\in X$, are the canonical embeddings, 
then $\{ fi_x \colon N \to M \mid x\in X\}$ is a  weakly generating set for $\Hom(N, M)$. 

{\rm (3)} If $f\colon N^{(X)} \to M$ is an $\Add(N)$-cover, then $\{ fi_x \colon  N \to M \mid x\in X\}$ is a minimal weakly generating set of  $\Hom(N, M)$. 
\end{theorem}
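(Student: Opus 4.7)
My plan is to address the three parts in order, each built on unpacking the definitions and exploiting summability to replace the finiteness of an ordinary generating set.

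For (1), triviality of $\Add(N)$ reduces the precovering problem to factoring an arbitrary $h : N^{(Y)} \to M$ through $f$. By the universal property of the coproduct, such a factorisation $\Phi : N^{(Y)} \to N^{(X)}$ is determined by its components $hj_y : N \to M$, so it suffices to factor a single $g \in \Hom(N,M)$ through $f$. The weak generating hypothesis writes $g = \sum_{x \in X} x s_x$ with $\{s_x\}$ summable, and summability is exactly the condition that lets $n \mapsto (s_x(n))_{x \in X}$ define a morphism $\varphi : N \to N^{(X)}$; then $f\varphi = g$ by construction.

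For (2), given $g \in \Hom(N,M)$, the precover property applied to $g$ (valid since $N \in \Add(N)$) yields $h : N \to N^{(X)}$ with $g = fh$. Taking $s_x$ to be the $x$-th component of $h$, the finite support of each $h(n) \in N^{(X)}$ is precisely summability of $\{s_x\}_{x \in X}$, and $g = fh = \sum_{x \in X} (fi_x)s_x$ exhibits $g$ as weakly generated by $\{fi_x\}$.

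For (3), I argue by contradiction: suppose $fi_{x_0} = \sum_{x \neq x_0} (fi_x)s_x$ for some summable family $\{s_x\}_{x \neq x_0}$. I would construct $g \in \End(N^{(X)})$ by setting $g i_{x_0} := \sum_{x \neq x_0} i_x s_x$ (summability ensures this lands in $N^{(X)}$) and $g i_x := i_x$ for $x \neq x_0$, then check directly that $fg = f$. Since $f$ is a cover, $g$ must be an automorphism of $N^{(X)}$; but its image lies in the proper direct summand $\bigoplus_{x \neq x_0} N$, so $g$ is not surjective, a contradiction. The main obstacle I anticipate is precisely this last step: one must cook up an endomorphism $g$ with $fg = f$ that visibly fails to be surjective, and the asymmetric construction above (adjusting only the $x_0$-component while freezing all others) is what makes the non-surjectivity transparent directly from the summability data.
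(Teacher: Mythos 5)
Your proposal is correct and follows essentially the same route as the paper's own proof: in (1) the same reduction via the coproduct universal property and the same use of summability to build the lift componentwise; in (2) the same extraction of the summable family from the components of the factoring map; in (3) the same endomorphism $g$ adjusting only the $x_0$-th component, with the cover property contradicted. The only cosmetic difference is that the paper remarks the constructed endomorphism is neither mono nor epi, whereas you single out non-surjectivity (the image lying in $\bigoplus_{x\ne x_0}N$), which is the cleaner of the two observations and entirely sufficient.
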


\begin{proof}
(1) Let $Y$ be an index set  with a homomorphism $h\colon N^{(Y)} \to  M.$
For each $y \in Y$,  write  $h \iota_y\colon  N \to  M$  in the
form $\sum_{x \in X} xg_{x,y}$, where the elements $g_{x,y} \in S$  form a summable family. 
 So for each $y$ there is a map $h_y \colon  N_y = N \to N^{(X)}$ such that 
$h_y (n) = (g_{x,y} (n))_{x \in X}$. By the universal property of direct sum for 
$N^{(Y)}$, there exists a  homomorphism $\beta\colon  N^{(Y)} \to  N^{(X)}$
such that $\beta \iota_y = h_y$ for every $y \in Y$. 
Then it is easy to check  that $f  \beta = h$,  hence $f$ is an $\Add(N)$-precover of $M$. 

(2) If $g \colon N \to M$, then there exists $h\colon  N \to N ^{(X)} $ such that $fh= g$. 
Consider the canonical projections $p_x\colon N^{(X)} \to N$, $x\in X$, and 
note that $\{\, p_xh\mid x \in X\,\}$ is a summable family of $\End(N)$.
Moreover, $g(n) = \sum_{x \in X} f i_x p_x h (n)$ for every $n \in N$, so $g = \sum_{x\in X} (fi_x)(p_xh)$. 

(3)  Set $Y := X \setminus \{ x \}$ and assume
$fi_x = \sum _{y \in Y} fi_y g_y $, where $\{\,g_y\mid y \in Y\,\}$ is a summable family in $\End(N)$.  
Define $\varphi \colon  N^{(X)} \to N^{(X)}$ such that $\varphi i_x = \sum_{y\in Y} i_y g_y  $
and $\varphi i_y =  i_y $, for every $y \in Y$. Then $\varphi$ is neither a monomorphism 
nor an epimorphism but $f \varphi = f $, which is a contradiction. 
\end{proof}

\begin{remark}\label{remark1.3}{\rm Recall that a module $N_R$ is said to be {\em self-small} if for every set $X$ and every morphism $\varphi\colon N_R\to N_R^{(X)}$ there exists a finite subset $F$ of $X$ such that $\varphi(N_R)\subseteq N_R^{(F)}$. 
When the class $\Add(N)$ is trivial and $N$ is self-small (for example, $N$  finitely generated or, more generally, small),  then  any
$\Add(N)$-precover of a module  $M$  is of the form 
$f \colon  N^{(X)} \to M$, where $X$  
is a generating set of $\Hom(N_R,M_R)$ and  $f$  is defined as in  Theorem~\ref{mgset}(1). 
To see this, notice that if $\Add(N)$ is trivial, then any $\Add(N)$-precover of $M$ is of the form 
 $ g\colon N^{(I) } \to  M $. 
Therefore consider the homomorphisms $ g_i\colon  N \to M,\ i \in I,$  
given by $g_i := g\iota_{i}$. Then $\{g_i \mid  i \in I\}$
is a generating set for $\Hom(N,M)$. Indeed,  let  $h\colon N \to M$ be such
that $h \not \in \sum_{i \in I} g_iS$. Since $N$ is self-small, $h$ 
does not  not factor through $g$, which is impossible if $g$ is an $\Add(N)$-precover of $M$. }
\end {remark}

Therefore Theorem~\ref {mgset}(2) implies:

\begin{corollary}
Suppose $N$ self-small and $\Add(N)$ trivial. If a module $M$ has an $\Add(N)$-cover, then 
the $S$-module $\Hom(N,M)$ has a minimal generating set. In particular, if $\Add(N)$ is covering,
then for every module $M_R$ the $S$-module $\Hom(N_R,M_R)$ has a minimal generating set.
\end{corollary}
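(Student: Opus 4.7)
The plan is to assemble the conclusion directly from Remark~\ref{remark1.3} and Theorem~\ref{mgset}(3), observing only that for self-small $N$ the notions of weakly generating and generating coincide on subsets of $\Hom(N,M)$.

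First I would fix an $\Add(N)$-cover of $M$. Since $\Add(N)$ is trivial, this cover has the form $f\colon N^{(X)} \to M$ for some index set $X$, and I would denote by $i_x\colon N\to N^{(X)}$ the canonical injections. The candidate minimal generating set is $\Sigma := \{\,fi_x \mid x\in X\,\}\subseteq \Hom(N,M)$.

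Next I would argue that $\Sigma$ generates $\Hom(N,M)_S$: this is exactly the content of the discussion in Remark~\ref{remark1.3}, which uses self-smallness of $N$ (together with triviality of $\Add(N)$) to show that any $h\in\Hom(N,M)$ which is not in $\sum_{x\in X} fi_x S$ would fail to factor through $f$, contradicting the precover property. Then, by Theorem~\ref{mgset}(3), $\Sigma$ is a minimal weakly generating set of $\Hom(N,M)_S$. The final step is to upgrade ``minimal weakly generating'' to ``minimal generating'': if some $fi_{x_0}$ lay in the $S$-submodule generated by $\Sigma\setminus\{fi_{x_0}\}$, then $fi_{x_0} = \sum_{x\in X\setminus\{x_0\}} fi_x\, s_x$ with only finitely many nonzero $s_x$, and this is automatically a summable family, contradicting the minimal weakly generating property.

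The ``in particular'' clause is immediate: if $\Add(N)$ is covering, then every $M_R$ admits an $\Add(N)$-cover, and the first part applies to each such $M$. There is no real obstacle here; the only point requiring care is the (trivial) observation that a finite $S$-linear combination is a summable combination, so minimality as a weakly generating set entails minimality as a generating set.
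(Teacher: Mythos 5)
Your proof is correct and follows the paper's own route: it combines Remark~\ref{remark1.3} (to see that an $\Add(N)$-cover $f\colon N^{(X)}\to M$ produces a genuine generating set, using self-smallness) with Theorem~\ref{mgset}(3) (to see that this set is minimal as a weakly generating set), and then notes the trivial implication that a generating set which is minimal among weakly generating sets is a minimal generating set, since finite $S$-linear combinations are summable. You spell out this last step more explicitly than the paper, whose brief pointer to Theorem~\ref{mgset} leaves the combination implicit, but the argument is the same in substance.
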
 

\begin{lemma} 
Let
$U_R$ be a self-small uniserial right $R$-module  with $\Add(U)$ covering. If  
$S = \End (U_R) $   is
a left chain domain, then  $S$ is a division ring and  $\Add(U)$ is closed under direct limit.  
\end{lemma}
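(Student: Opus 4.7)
The plan is to establish the two conclusions in order: first that $S$ is a division ring, then that $\Add(U)$ is closed under direct limit.

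Since $S$ is a left chain domain it is local with unique maximal ideal $J(S)$, so $U$ is of type~$1$ and $\Add(U)$ is trivial. Combined with $U$ self-small and $\Add(U)$ covering, the preceding Corollary yields that $\Hom(U,M)_S$ has a minimal generating set for every right $R$-module $M$.

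Suppose for contradiction $J(S)\neq 0$, and fix $0\neq s\in J(S)$. Since $S$ is a domain, $s^n\neq 0$ in $S$ for every $n\geq 1$. Form
\[
L:=\varinjlim\bigl(U\xrightarrow{s}U\xrightarrow{s}U\xrightarrow{s}\cdots\bigr),
\]
with colimit injections $\iota_n\colon U\to L$ satisfying $\iota_n=\iota_{n+1}\circ s$. I aim to verify that $\Hom(U,L)\neq 0$ and $\Hom(U,L)=\Hom(U,L)\cdot J(S)$; a standard Nakayama-type argument (sketched for finitely presented modules in the Introduction via \cite[Lemma~3.1]{HR}) then forces $\Hom(U,L)_S$ to have no minimal generating set, contradicting the previous paragraph and thus forcing $J(S)=0$.

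The equality $\Hom(U,L)=\Hom(U,L)\cdot J(S)$ follows whenever each $f\colon U\to L$ factors as $f=\iota_m\circ\tilde f$ for some $m$ and $\tilde f\in S$, because then $f=\iota_{m+1}\circ s\circ\tilde f$, and $s\tilde f\in J(S)$ places $f$ in $\Hom(U,L)\cdot J(S)$. The main obstacle I expect to encounter is securing this factorization, together with the non-vanishing $L\neq 0$, for a general self-small uniserial $U$. When $U$ is cyclic with generator $v$, both are immediate: $s^n\neq 0$ forces $s^n(v)\neq 0$, so $L\neq 0$, and $f$ is determined by $f(v)\in\iota_m(U_m)$ for some~$m$. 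For non-cyclic self-small $U$ one must exploit self-smallness more delicately to bound the image of $f$ inside a single $\iota_m(U_m)$, or else refine the choice of $s\in J(S)$ using the left-chain structure of $S$ (for instance, picking a monic $s$ when $I\subsetneq K$, which makes every $\iota_n$ injective and presents $L$ as an ascending union of copies of $U$ and reduces to an analysis on cyclic subobjects).

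Once $S$ is known to be a division ring, the second conclusion is immediate: every non-zero endomorphism of $U$ is an automorphism, so any sequence of non-isomorphisms $U\to U$ is identically zero. The constant family $\{U_i\}_{i\in\N}$ with $U_i=U$ is thus trivially locally $T$-nilpotent, and Lemma~\ref{localy}(3) yields that $\Add(U)$ is closed under direct limit.
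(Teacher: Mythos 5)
Your proposal takes a genuinely different route from the paper, and the gap you flag is real and is not merely a technicality to be polished. Your plan is to build $L=\varinjlim(U\xrightarrow{s}U\xrightarrow{s}\cdots)$ for a fixed $0\neq s\in J(S)$ and show that $\Hom(U,L)_S$ has no minimal generating set, but both of the key facts you need are exactly the things that fail outside the finitely presented case. First, $L$ can very well be zero: $L=0$ precisely when $\bigcup_n\ker(s^n)=U$, which is perfectly possible for a non--finitely-generated uniserial $U$ even when every $s^n\neq0$ in the domain $S$. Second, even when $L\neq0$, the factorization $f=\iota_m\circ\tilde f$ for every $f\in\Hom(U,L)$ is a \emph{finite presentation} phenomenon (and this is exactly how the Introduction uses it); self-smallness governs maps $U\to U^{(X)}$ into coproducts, not maps $U\to\varinjlim U$, and does not hand you the factorization. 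Your suggested fix (choose $s$ monic so that $L$ is a union of copies of $U$) addresses $L\neq 0$ but not the factorization, and in any case such an $s$ need not lie in $J(S)$ in the situation at hand: when $S$ is a left chain domain and local, $J(S)$ is the larger of $I$ and $K$, and you would need a non-invertible monic, i.e.\ $I\subsetneq K$, which is an extra hypothesis. So the first half of the argument is not complete as written.

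The paper's own proof is shorter and avoids direct limits entirely. It uses the Corollary you invoke to get a minimal generating set $X$ for $M:=\Hom(U,U^I)=S^I$ as a right $S$-module, for every index set $I$. If $X$ were not free, a dependence relation $\sum x_is_i=0$ with nonzero terms can, using the left chain condition, be rewritten so that all $s_i$ are right multiples of $s_1$; since $S$ is a domain and $S^I$ is torsion-free, one can cancel $s_1$ and express $x_1$ in terms of the other generators, contradicting minimality. Hence every product $S^I$ is a free right $S$-module, so $S$ is right perfect by Chase's theorem. In a domain, perfect forces $J(S)=0$, so $S$ is a division ring, and then Lemma~\ref{localy} finishes the job as in your last paragraph (which is correct). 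If you want to pursue your own route, the natural repair is to add a finite presentation or related hypothesis, or to replace the direct limit test module $L$ by the products $U^I$ as the paper does.
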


\begin{proof}
If $\Add(U)$ is covering, then the right  $S$-module $M = \Hom (U, U^I) = S^I$ has a minimal generating set for every
index set $I$. 
Let $X$
be a minimal generating set for $M_S$. Let us show that $X$ is a free generating set for $M_S$. 
If  not, then  there exist $s_1, \dots , s_n \in S$ and $x_1, \dots, x_n \in X$ such that 
$ x_1 s_1+ \cdots  +  x_n s_n = 0$ and the elements $x_is_i $ are non-zero.
 Since $S$ is left chain, we can assume that  
$s_i =   s_i' s_1$ for every $i > 1$. 
Now since $S$ is a domain and $M=S^I$ is torsion-free, we see that  $x_1$  would be generated by the other elements in $X$, which is a contradiction.
So any product of copies of $S$ is a free right $S$-module. By 
\cite [Theorem 3.3] {Chase},  $S$ is right perfect and, 
by Lemma~\ref{localy},  $\Add(U)$ is closed under direct limit. 
\end{proof}

Let $U_R$ be a uniserial module over an arbitrarily fixed ring $R$. For every right $R$-module $N_R$, define a group topology on the abelian additive group $\Hom(U_R,N_R)$ taking as a basis of neighbourhoods of $0$ the subgroups $H_u:=\{\,f\in \Hom(U_R,N_R)\mid f(u)=0\,\}$ of $\Hom(U_R,N_R)$, where $u$ ranges in $U$. This topology on $\Hom(U_R,N_R)$ is usually called the {\em finite topology.} 
Notice that the  subgroups $H_u$ ($u\in U)$ are linearly ordered by set inclusion. It is easy to check that:

\begin{Lemma} For every right $R$-module $N_R$, $\Hom(U_R,N_R)$ is a complete Hausdorff topological group.\end{Lemma}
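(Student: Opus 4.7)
The plan has three parts, matching the three properties in the statement.

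First, that the finite topology is a group topology: each $H_u$ is an additive subgroup of $\Hom(U_R,N_R)$ (since $f,g\in H_u$ implies $(f-g)(u)=0$), and the collection $\{H_u\}_{u\in U}$ is linearly ordered by inclusion as noted just before the statement, so any two basic neighborhoods automatically contain a third. Together these two facts give a filter basis of subgroups at $0$, which generates (by translation) a topology making $(\Hom(U_R,N_R),+)$ a topological group.

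Second, Hausdorffness reduces to $\bigcap_{u\in U}H_u=0$, which is immediate: a homomorphism $f$ with $f(u)=0$ for every $u\in U$ is the zero map.

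Third, and the substantive part, completeness. Given a Cauchy net $(f_\alpha)_{\alpha\in\Lambda}$ in $\Hom(U_R,N_R)$, I would construct the limit pointwise. For each fixed $u\in U$, the Cauchy condition applied to the neighborhood $H_u$ says that $f_\alpha(u)=f_\beta(u)$ for all sufficiently late $\alpha,\beta$; I define $f(u)\in N_R$ to be this eventual common value. To check $R$-linearity, given $u_1,u_2\in U$ and $r_1,r_2\in R$, set $v:=u_1r_1+u_2r_2\in U$ and choose an index $\alpha$ past the stabilization thresholds of $u_1$, $u_2$, and $v$ simultaneously; then the identity $f_\alpha(v)=f_\alpha(u_1)r_1+f_\alpha(u_2)r_2$ transports to $f$, and additivity is analogous. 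By construction $f_\alpha-f\in H_u$ for all $\alpha$ past the stabilization index of $u$, so $f_\alpha\to f$ in the finite topology.

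The only mild subtlety is that one must argue with nets (or Cauchy filters) rather than sequences, since $U$ need not be countably generated, so the basis $\{H_u\}$ need not be countable; however, the chain property of the basis makes even this bookkeeping routine, since any finite set of indices $u_1,\ldots,u_n\in U$ has a common refinement given by whichever of the $H_{u_i}$ is smallest.
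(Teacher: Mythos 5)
Your argument is correct. The paper states this lemma without proof, describing it as easy to check; your verification that the linearly ordered subgroups $H_u$ form a filter basis of subgroups at $0$ (hence a group topology), that $\bigcap_u H_u=0$ gives Hausdorffness, and the pointwise construction of the limit of a Cauchy net (with $R$-linearity checked by passing a finite set of elements of $U$ past their common stabilization threshold in the directed set) is the standard and correct way to fill this in.
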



In particular, for $N_R=U_R$, we get a group topology on the ring $S=\End(U_R)$. Now $_SU_R$ is a bimodule and $H_u=l.ann_S(u)$ is a left ideal of $S$ for every $u\in U$, so that the finite topology turns out to be a left linear topology on $S$. (In the special case of $U_R$ artinian, all the subgroups $H_u$ are $S$-submodules of the right $S$-module $\Hom(U_R,N_R)$, but we will not need this further hypothesis $U_R$ artinian in the following.)

\begin{Lemma} For every right $R$-module $N_R$, $\Hom(_SU_R,N_R)_S$
is a topological right module over the left linearly topological ring $S$.\end{Lemma}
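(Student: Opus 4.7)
The plan is to verify the two axioms that make $\Hom(U_R,N_R)$ into a topological right $S$-module: continuity of addition, which is exactly the content of the preceding Lemma, and joint continuity of the scalar action
\[
\mu\colon \Hom(U_R,N_R)\times S\longrightarrow \Hom(U_R,N_R),\qquad (f,s)\longmapsto f\circ s.
\]
Both spaces carry topologies whose basis of neighbourhoods of $0$ consists of the chain of subgroups $H_u=\{\,f\mid f(u)=0\,\}$ on the $\Hom$-side and $H_u=\lann_S(u)$ on the $S$-side, indexed by $u\in U$. It is therefore enough to check continuity at an arbitrary point $(f_0,s_0)$ against a single basic neighbourhood $f_0s_0+H_u$ of $\mu(f_0,s_0)$.

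The key ingredient is the product-rule identity $fs-f_0s_0=(f-f_0)s_0+f(s-s_0)$, which upon evaluation at $u\in U$ yields
\[
(fs-f_0s_0)(u)=(f-f_0)(s_0u)+f\bigl((s-s_0)u\bigr).
\]
Thus if $f-f_0\in H_{s_0u}$ and $s-s_0\in H_u$, then both terms on the right vanish and $fs\in f_0s_0+H_u$. In other words, the neighbourhood $(f_0+H_{s_0u})\times(s_0+H_u)$ of $(f_0,s_0)$ is mapped by $\mu$ into $f_0s_0+H_u$, which is exactly joint continuity of the scalar action.

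There is no real obstacle here; the only conceptual point is the correct choice of neighbourhoods. On the $S$-side the basic open $s_0+H_u$ is well defined precisely because $H_u=\lann_S(u)$ is a \emph{left} ideal (which is why $S$ is only \emph{left} linearly topologized), while on the $\Hom$-side one must shrink from $H_u$ to $H_{s_0u}$ so that the first summand in the identity above is annihilated on the element $s_0u$ rather than $u$. Once these choices are made the verification is formal, and combined with the previous Lemma it delivers the topological right $S$-module structure on $\Hom({}_SU_R,N_R)_S$.
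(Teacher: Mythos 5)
Your proof is correct and, once unpacked, has exactly the same mathematical content as the paper's, merely packaged differently. The paper verifies Stenstr\"om's axioms $NM\,3$--$NM\,5$, which are the abstract conditions guaranteeing joint continuity of the scalar action, choosing $H'_u = \lann_S(u)$ for $NM\,3$, $H_{su}$ for $NM\,4$, and $H_0H'_u\subseteq H_u$ for $NM\,5$. You instead prove joint continuity of $(f,s)\mapsto fs$ directly from the product-rule decomposition $fs-f_0s_0=(f-f_0)s_0+f(s-s_0)$: controlling the first summand by shrinking to $H_{s_0u}$ on the $\Hom$-side is $NM\,4$, and controlling the second summand by $\lann_S(u)$ on the $S$-side (uniformly in $f$) is $NM\,5$, which subsumes $NM\,3$. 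So the neighbourhood choices coincide; your write-up is a little more self-contained since it does not ask the reader to recall the cited axioms, while the paper's is terser by delegating the reduction from joint continuity to axiom-checking to the reference.
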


\begin{proof} We must show that the three axioms $NM\,3$-$NM\,5$ in \cite[page 144]{St} hold. Let $H_u$ ($u\in U)$ be the subgroups of $\Hom(U_R,N_R)$ and $H'_u$ ($u\in U)$ be the left ideals in $S$.
Then
$NM\,3$ holds, because for each $f\in\Hom(U_R,N_R)$ and each $H_u$, we get that $H'_u$ has the property that $fH'_u\subseteq H_u$. $NM\,4$ holds, because for each $H_u$ and each $s\in S$, we have that $H_{su}$ is such that $H_{su}s \subseteq H_u$. Finally, $NM\,5$ holds, because, for each $H_u$ ($u\in U$), we have $H_0H'_u\subseteq H_u$.\end{proof}

Trivially, if the right $S$-module $M_S:=\Hom(U_R,N_R)$ has a minimal weakly generating set $X$ and $M'_S=\sum_{x\in X}xS$ is the $S$-submodule of $M_S$ generated by $X$, then 
$M_S$ is the completion of its topological submodule $M'_S$ with the induced topology and $X$ is a minimal generating set for $M'_S$. (To prove that $M'_S$ is dense in $M_S$, fix $f\in M_S$ and $u\in U$. One must show that $(f+H_u)\cap M'_S\ne\emptyset$. Now $f=  \sum _{x\in X} x s_x $ for some 
 summable family of elements of $S$. Hence there exists a finite subset $F$ of $X$ such that $s_x(u)=0$ for every $x\in X\setminus F$. Then $\sum _{x\in X\setminus F} x s_x \in H_u$.)
 
 \medskip
 
Notice that a family $X\subseteq S$ is summable if and only if the set $X\setminus U$ is finite for every neighourhood $U$ of $0$ in $S$. Moreover, we have the following proposition, easy to prove. It improves Lemma~\ref{localy}.

\begin{proposition} The following conditions are equivalent for a uniserial module 
$U_R$ with endomorphism ring $S$:

{\rm (1)} The family $\{\,U _i\mid{i \geq 1 }\,\}$, with $U_i=U$ for all $i\ge1$,
is locally $T$-nilpotent.

{\rm (2)} The ring $S$ is local with maximal ideal $I$, the set of all endomorphisms of $U$ that are not monic, and, for every strictly descending chain $Ss_1\supset Ss_2\supset Ss_3\supset \dots$ of principal left ideals of $S$, the family $\{\,s_i\mid n\ge 1\,\}$ is summable.
\end{proposition}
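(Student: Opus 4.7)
The plan is to prove the two implications separately, invoking Lemma~\ref{localy} throughout for the local ring structure.

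For (1)$\,\Rightarrow\,$(2), Lemma~\ref{localy}(1) already supplies that $S$ is local with maximal ideal $I$. For the summability condition, given a strictly descending chain $Ss_1 \supset Ss_2 \supset \cdots$ of principal left ideals, I would factor $s_{n+1} = t_{n+1} s_n$ with $t_{n+1} \in S$; strict descent forces each $t_{n+1}$ to be a non-unit, hence $t_{n+1} \in I$, i.e.\ non-monic. Fix $u \in U$, set $x := s_1(u)$, and apply the local $T$-nilpotency hypothesis to the sequence of non-isomorphisms $U \stackrel{t_2}{\longrightarrow} U \stackrel{t_3}{\longrightarrow} U \stackrel{t_4}{\longrightarrow} \cdots$ and the element $x$; this yields $m$ with $t_m t_{m-1}\cdots t_2(x) = 0$. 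Since $s_n = t_n \cdots t_2 s_1$ for $n \geq 2$, this gives $s_n(u) = 0$ for all $n \geq m$, and summability of $\{s_i\}$ follows.

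For (2)$\,\Rightarrow\,$(1), take any sequence $U \stackrel{f_1}{\longrightarrow} U \stackrel{f_2}{\longrightarrow} \cdots$ of non-isomorphisms; locality of $S$ forces each $f_i \in I$. Set $g_n := f_n f_{n-1}\cdots f_1$ and consider the descending chain $Sg_1 \supseteq Sg_2 \supseteq \cdots$. I would split into two cases. If this chain eventually stabilises, say $Sg_N = Sg_{N+1}$, then $g_N = r g_{N+1} = r f_{N+1} g_N$ for some $r \in S$, so $(1-rf_{N+1})g_N = 0$; since $rf_{N+1} \in I = J(S)$, the element $1 - rf_{N+1}$ is a unit, forcing $g_N = 0$ outright, hence $g_n(u)=0$ for every $u \in U$ and every $n \geq N$. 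Otherwise one extracts indices $n_1 < n_2 < \cdots$ giving a strictly descending subchain $Sg_{n_1} \supset Sg_{n_2} \supset \cdots$; hypothesis~(2) applied to this subchain supplies summability of $\{g_{n_i}\}$, so for each $u \in U$ there is $i_0$ with $g_{n_{i_0}}(u) = 0$, and composition gives $g_n(u) = 0$ for all $n \geq n_{i_0}$. Either way, the family of copies of $U$ is locally $T$-nilpotent.

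I do not anticipate any serious obstacle; the argument is largely bookkeeping. The one step that genuinely uses ring-theoretic input is the stabilisation case above, where the fact that $I$ is the Jacobson radical of a local ring is used to invert $1 - rf_{N+1}$. This is also the reason why condition~(2) cannot drop the local-ring assumption: summability along strictly descending chains alone would leave the stabilising tails of $(Sg_n)$ uncontrolled.
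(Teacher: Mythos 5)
Your proof is correct, and since the paper leaves this proposition unproved (it is just declared ``easy to prove''), your argument is presumably exactly the intended one: in (1)$\Rightarrow$(2) you get locality from Lemma~\ref{localy}(1) and then apply local $T$-nilpotency to the non-units $t_{n+1}$ arising from $s_{n+1}=t_{n+1}s_n$; in (2)$\Rightarrow$(1) you correctly note that $Sg_N=Sg_{N+1}$ forces $g_N=0$ because $1-rf_{N+1}$ is invertible (as $rf_{N+1}\in I=J(S)$), and that otherwise the chain $Sg_1\supset Sg_2\supset\cdots$ is strictly descending so hypothesis (2) applies directly (the ``subchain extraction'' step is harmless but unnecessary, since being outside Case 1 already means the full chain is strictly descending).
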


\begin{Lemma}\label{minmax} Let $M_R,N_R$ be right $R$-modules, and $S:=\End(N_R)$. If\linebreak $\Hom(N_R,M_R)_S$ is non-zero and has a minimal weakly generating set as an $S$-module, then $\Hom(N_R,M_R)_S$ has a maximal submodule.  \end{Lemma}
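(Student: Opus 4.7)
The plan is to emulate the classical argument that a module with a minimal generating set admits a maximal submodule, adapted to the weakly-generating setting. Write $H:=\Hom(N_R,M_R)_S$, let $X$ be a minimal weakly generating set for $H$, and (since $H\ne 0$) pick any $x_0\in X$. I would introduce the subset
\[
L:=\{\,f\in H\mid f\text{ is weakly generated by }X\setminus\{x_0\}\,\},
\]
i.e., the set of $f\in H$ expressible as $\sum_{x\ne x_0} xs_x$ for some summable family $(s_x)_{x\in X\setminus\{x_0\}}$ in $S$.

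First I would verify that $L$ is an $S$-submodule of $H$. Closure under sums holds because if $(s_x)$ and $(t_x)$ are summable, then for each $n\in N_R$ only finitely many of the values $s_x(n)$, $t_x(n)$, and therefore $(s_x+t_x)(n)$, are non-zero. Closure under the right action of $r\in S$ holds because $(s_xr)(n)=s_x(r(n))$ is non-zero for only finitely many $x$; hence $fr=\sum_{x\ne x_0}x(s_xr)\in L$.

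Next I would record two crucial facts. First, $x_0\notin L$: otherwise $x_0$ itself would be weakly generated by $X\setminus\{x_0\}$, contradicting the minimality of $X$. Second, $H=L+x_0S$: given $f\in H$, write $f=\sum_{x\in X}xs_x$ with $(s_x)$ summable; then the subfamily $(s_x)_{x\ne x_0}$ is still summable, so $f-x_0s_{x_0}=\sum_{x\ne x_0}xs_x$ lies in $L$ and thus $f\in L+x_0S$.

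Finally, the Zorn's lemma step. The collection of $S$-submodules of $H$ that contain $L$ but do not contain $x_0$ is non-empty (it contains $L$) and closed under unions of chains (a union of such submodules avoids $x_0$, since otherwise $x_0$ would sit in some member of the chain). Let $L^*$ be a maximal such element. If $M'$ is any submodule of $H$ strictly larger than $L^*$, then by maximality of $L^*$ we have $x_0\in M'$, whence $M'\supseteq L^*+x_0S\supseteq L+x_0S=H$. Hence $L^*$ is a maximal submodule of $H$. I do not anticipate a genuine obstacle; the only mildly delicate point is that $L$ is an $S$-submodule, and this reduces to the preservation of summability under pointwise addition of families and under right composition with an endomorphism.
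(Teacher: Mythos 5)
Your proof is correct and follows essentially the same route as the paper: both consider the submodule of elements weakly generated by $X\setminus\{x_0\}$, observe it is proper and that the quotient is cyclic (your identity $H=L+x_0S$), and then invoke the standard Zorn's lemma fact that a non-zero cyclic module has a maximal submodule, which pulls back to a maximal submodule of $\Hom(N_R,M_R)_S$. The only difference is that you unwind the Zorn's lemma step explicitly, whereas the paper cites it in one line.
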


\begin{proof} Suppose that $\Hom(N_R,M_R)_S\ne 0$ has a minimal weakly 
generating set $X$. From $\Hom(N_R,M_R)\ne 0$, we get that $X\ne\emptyset$. Fix $x\in X$. Then the set of all elements of $\Hom(N_R,M_R)_S$ weakly generated by $X\setminus\{x\}$ is a submodule $H$ of $\Hom(N_R,M_R)_S$, which is proper, because $x\notin H$. Hence $\Hom(N_R,M_R)_S/H$ is a non-zero cyclic right $S$-module. Thus $\Hom(N_R,M_R)_S/H$ has a maximal submodule, so $\Hom(N_R,M_R)_S$ has a maximal submodule.\end{proof}

The following elementary lemma is necessary for the sequel. 

\begin{lemma}\label{maximalsubmodule}
Let $S$ be a local ring, and $M$
 be its maximal ideal. Then a module $_SA $  has a maximal submodule if and only if $MA 
\neq A$.
\end{lemma}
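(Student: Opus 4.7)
The plan is to prove both implications by reducing to a standard fact about modules over division rings, using the hypothesis that $S/M$ is a division ring (which holds because $S$ is local).

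For the forward direction, I would start with a maximal submodule $N \subseteq A$ and consider the simple quotient $A/N$. Since $S$ is local with maximal ideal $M$, any simple left $S$-module is isomorphic to $S/M$, so $M$ annihilates $A/N$. This gives $MA \subseteq N \subsetneq A$, hence $MA \neq A$.

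For the backward direction, I would pass to the quotient $A/MA$, which is a non-zero left module over $S/M$. Since $S/M$ is a division ring, $A/MA$ is a non-zero vector space, and by choosing a basis (using Zorn's lemma / axiom of choice) I can find a maximal subspace $\overline{N} \subsetneq A/MA$, namely the span of all but one basis vector. Pulling back along the canonical projection $A \to A/MA$ yields a maximal submodule of $A$ containing $MA$.

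There is no real obstacle here, since both implications are essentially one-liners once one invokes the defining property of a local ring (that $S/M$ is a division field, and that every simple $S$-module is $S/M$) together with the existence of bases in vector spaces. The only small point to be careful about is to record that the correspondence between submodules of $A$ containing $MA$ and submodules of $A/MA$ preserves maximality, which is immediate from the lattice isomorphism theorem.
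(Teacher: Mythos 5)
Your proof is correct and follows the same approach as the paper: pass to $A/MA$ as a vector space over the division ring $S/M$ for one direction, and observe that $M$ annihilates the simple quotient $A/N$ for the other.
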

\begin{proof} If $MA \neq A$, then $A/MA$ is a non-zero left vector space over the division ring 
$S/M,$ hence has a maximal submodule.
Conversely, if $_SA$ has a maximal submodule $ A'$, then $A/A'$ is a simple left $S$-module,
 hence is a module over  $S/M,$ so $MA \leq A' \neq A$.
 \end{proof}

If $U_1$  and $U_2$  are  uniserial modules, we say that 
$U_1$ and $ U_2$  are in the same {\em epigeny  
class} if there are epimorphisms $ f:U_1 \to U_2$  and $g : U_2 \to U_1$. 
In this case we write $[U_1]_e = [U_2]_e$. 
Recall that for a uniserial module $U$,  $U_e$ is the union of the kernels of all epimorphisms in $S = \End (U)$.
Note that if $U_e$  is non-zero and $V$ is a submodule of $U$,
 then $[U]_e = [U/V]_e$ if and only if $V < U_e$. 
In particular, $U$ and $U/U_e$ are not in the same epigeny class. 
In fact, if $g\colon  U/V \to U$ is an epimorphism, $U_e \leq V$, 
$h \colon  U \to U$ is an epimorphism with non-zero kernel, and $\pi \colon  U\to U/ U_e$ is the canonical projection, then   
$ hg \pi \colon  U \to U$  would be an epimorphism with kernel properly containing $U_e$, which is a contradiction.  

Note that if $N \leq N'$ are submodules of $U$ such that 
$N = \ker (f_N)$  and $N' = \ker(f_{N'})$ for suitable  epimorphisms $f_N ,  f_{N'} \in S$, then
the map $f _{N, N'} \in S$, defined by $f _{N, N'} (u) = f_{N'} (x)$ if $u = f_N (x)$, is the unique map 
with the property that  $f _{N, N'} f_N  = f_{N'}$. 

Set  $T: = \{ N \leq U \mid N=\ker (f_N)$  for some epimorphism $f_N \in S \} $. 
Then we have a direct system $(U_N) _{N  \in T}$, where $U_N = U$ and, for 
$N \leq N'$,   we have $f _{N, N'} \colon  U_N \to U_{N'}$. Then the direct limit of the system is $U/U_{e}$.  

\begin{theorem}\label{Ue}
Let  $U$ be of type $1$ (that is, suppose $S$ local). If $\Add(U)$ is covering, then either $U_e = 0$ or
$U_e = U$.
\end{theorem}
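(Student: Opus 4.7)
The plan is proof by contradiction. Suppose $\Add(U_R)$ is covering and $0 \neq U_e \neq U$, and set $M := U/U_e \neq 0$. Because $U$ is of type~1, $\Add(U)$ is trivial, so any $\Add(U)$-cover of $M$ has the form $p\colon U^{(X)} \to M$. By Theorem~\ref{mgset}(3), the family $\{p i_x : x \in X\}$ is a minimal weakly generating set of $\Hom(U,M)_S$, so Lemma~\ref{minmax} yields a maximal submodule of $\Hom(U,M)_S$; the right-module analogue of Lemma~\ref{maximalsubmodule}, applied over the local ring $S$, then gives
\[
\Hom(U,M)\cdot J(S) \neq \Hom(U,M).
\]
The goal is to derive the reverse equality, contradicting the display.

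Since $U$ is of type~1, $J(S)$ equals either $I$ or $K$. If $J(S)=K$, then $I \subseteq K$, so every monomorphism in $S$ is an epimorphism and hence an automorphism; in particular every epimorphism has zero kernel, forcing $U_e=0$, a contradiction. The degenerate subcase $K=I$ of $J(S)=I$ is handled identically. Hence we may assume $J(S)=I$ and $K \subsetneq I$; in particular $S$ contains surjective non-injective endomorphisms of $U$.

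By the discussion preceding the theorem, $U \not\cong U/U_e$, so $U_e \notin T$; combined with $U_e = \bigcup_{N \in T} N$, this forces $T$ to have no maximum element. For each $N \in T$ pick $N' \in T$ with $N < N'$: the transition map $f_{N,N'}$ is a surjective non-isomorphism with kernel isomorphic to $N'/N \neq 0$, so $f_{N,N'} \in I$. The direct-system identity $\phi_N = \phi_{N'} f_{N,N'}$ puts each $\phi_N$, and in particular $\pi = \phi_0$, into $\Hom(U,M)\cdot I$. Consequently, every $f \in \Hom(U,M)$ with $\ker f \supseteq U_e$ factors as $f = \bar f \pi$ with $\bar f \in \End(M)$, and therefore lies in $\End(M)\cdot \pi \subseteq \Hom(U,M)\cdot I$.

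The main obstacle is to rule out the existence of $f \in \Hom(U,M)$ with $\ker f \subsetneq U_e$. If such an $f$ existed, the short exact sequence $0 \to U_e/\ker f \to U/\ker f \to M \to 0$ combined with the identification $f(U) \cong U/\ker f \leq M$ would exhibit a non-trivial submodule $Q$ of $f(U) \leq M$ satisfying $f(U)/Q \cong M$; pulling this data back to $U$ via the order-preserving correspondence between submodules of $M$ and submodules of $U$ containing $U_e$, I plan to combine the resulting surjections with a non-iso epimorphism $h \in I \setminus K$ (available since $K \subsetneq I$) to construct an epimorphism $U \to U$ whose kernel strictly contains $U_e$, in direct parallel with the argument used just before the theorem to establish $[U]_e \neq [U/U_e]_e$. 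Such an epimorphism contradicts $U_e = \bigcup_{N \in T} N$. This exclusion forces $\ker f \supseteq U_e$ for every $f \in \Hom(U,M)$, so $\Hom(U,M) = \End(M)\cdot \pi \subseteq \Hom(U,M)\cdot I$, contradicting the displayed inequality and completing the proof.
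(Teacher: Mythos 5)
Your overall plan---show $\Hom(U,U/U_e)_S$ has no maximal submodule by establishing $\Hom(U,U/U_e)\cdot J(S)=\Hom(U,U/U_e)$---is exactly the paper's strategy, and your reduction to the case $J(S)=I$, $K\subsetneq I$ is a correct (if slightly misstated: $I\subseteq K$ gives ``every epimorphism is a monomorphism,'' not the converse) tidying-up of a case the paper leaves implicit. Your factorisation of $\pi$ through the transition maps $f_{N,N'}\in I$ of the direct system is also fine and shows $\pi\in\Hom(U,M)\cdot I$.

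The genuine gap is in the step you yourself flag as ``the main obstacle'': you try to prove that \emph{every} $f\in\Hom(U,M)$ has $\ker f\supseteq U_e$, but this is both unproven (you only outline a plan) and stronger than what can be expected. The argument preceding the theorem only rules out $\ker f\subsetneq U_e$ when $f\colon U\to U/U_e$ is an \emph{epimorphism} (for then $[U]_e=[U/\ker f]_e=[U/U_e]_e$, contradicting $[U]_e\neq[U/U_e]_e$); nothing forces a non-surjective $f\colon U\to M$ to have kernel containing $U_e$, and the sketch you give (building a surjection $U\to U$ with kernel strictly larger than $U_e$) does not obviously produce such a map from a non-surjective $f$. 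The paper bypasses this issue entirely with a short observation you are missing: if $h\colon U\to M$ is \emph{not} onto, then $h=p+(h-p)$, where $p$ is the epimorphism you already have and $h-p$ is forced to be onto because $M$ is uniserial (otherwise $p(U)\subseteq h(U)+(h-p)(U)$ would be a proper submodule of $M$). Thus every element of $\Hom(U,M)$ is a sum of at most two epimorphisms, each epimorphism has kernel containing $U_e\supsetneq\ker\varphi$ for any epimorphism $\varphi\in S$ with nonzero kernel, hence factors through $\varphi\in I$, and $\Hom(U,M)=\Hom(U,M)\cdot I$ follows without any constraint on kernels of non-epimorphisms. Replacing your exclusion step with this sum-of-two-epimorphisms trick closes the gap.
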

\begin{proof}
Suppose $\Add(U)$ covering and $ 0 \neq U_e < U$.
Then $\Hom(U,U/U_e)$ contains a non-zero epimorphism,  $p$ say.
The kernel of every epimorphism $g\colon U \to U/U_e$ contains $U_e$.
Otherwise, if $\ker (g) \varsubsetneq U_e$, then $[U]_e = [U/\ker (g)]_e = [U/U_e]_e$, which is contradiction.
It follows that every epimorphism of $\Hom(U,U/U_e)$ is in $\Hom(U,U/U_e)f$, where 
$f\in \End(U)$ is an arbitrary epimorphism.

Note that every element of $\Hom (U,U/U_e)$ is a sum of at most two  epimorphisms. Indeed, 
if $h \colon U \to U/U_e$ is not onto, then $h = p + (h-p)$, where $p$ and $h-p$ are onto.
It follows that $\Hom(U,U/U_e) = \Hom(U,U/U_e)I$ (since $I$ contains an epimorphism).
So $\Hom(U,U/U_e)$ does not have a maximal submodule, and consequently $U/U_e$ 
does not have an $\Add(U)$-cover. \end{proof}

\section{Finitely generated uniserial modules and modules of type $1$ with $I \subset K$}\label{3}

Note that a uniserial module $U$ has a maximal submodule if and only if $U$ is finitely generated (= cyclic). 

\begin{proposition}\label{noetheriancase}
Let $U_R$ be a finitely generated uniserial module with maximal submodule $M$. Assume that $S$ is local 
with maximal ideal $K$, the set of all non-epi endomorphisms of $U_R$.   If $J$ is an infinite set of cardinality $|J| \geq |U|$, and the module   $N = (U/M) ^ {(J)}$ has an $\Add(U)$-cover, then $\{\,U_i\mid {i\ge 1}\,\}$ with $U_i = U $ for every $i\ge1$ is locally 
$T$-nilpotent. Therefore  $\Add(U)$ is covering and closed under direct limit. 
\end{proposition}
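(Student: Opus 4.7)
The plan is to translate the $\Add(U)$-covering structure into a projective cover of the right $S$-module $\Hom(U,N)$ by applying the functor $\Hom(U,-)$, and then invoke Bass's theorem on perfect rings. Because $U$ is cyclic and $S$ is local, $\Add(U)$ is trivial and the hypothesized cover has the form $f\colon U^{(L)}\to N$. Since $U$ is finitely generated, $\Hom(U,-)$ commutes with arbitrary direct sums, yielding natural $S$-linear identifications $\Hom(U,U^{(L)})\cong S^{(L)}$ and $\Hom(U,N)\cong\Hom(U,U/M)^{(J)}$.

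First I would analyse $\Hom(U,U/M)$. Every nonzero homomorphism $U\to U/M$ is surjective with kernel $M$ (since $U/M$ is simple and $M$ is the unique maximal submodule of the uniserial $U$), hence factors as $d\circ\pi$ for the canonical projection $\pi\colon U\to U/M$ and some $d$ in the division ring $D:=\End(U/M)$. The assignment $s\mapsto d_s$, where $d_s(\overline u):=\overline{s(u)}$, is a ring homomorphism $S\to D$ with kernel exactly $K$, identifying its image with a division subring $S/K\subseteq D$. Consequently $\Hom(U,U/M)_S$ is a semisimple right $S$-module isomorphic to $(S/K)^{([D:S/K])}$, and since $[D:S/K]\le|D|\le|U|\le|J|$, infinite cardinal arithmetic gives $\Hom(U,N)_S\cong (S/K)^{(|J|)}$.

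The crux will be verifying that the induced $S$-linear map $f_*\colon S^{(L)}\to\Hom(U,N)$ is a projective cover. It is surjective by the precover property of $f$, and its source is a free $S$-module. For the superfluousness of $\ker f_*=\Hom(U,\ker f)$, I would suppose $X\subseteq S^{(L)}$ is an $S$-submodule with $X+\Hom(U,\ker f)=S^{(L)}$, decompose each canonical inclusion $\iota_\ell\colon U\hookrightarrow U^{(L)}$ as $\iota_\ell=\xi_\ell+\kappa_\ell$ with $\xi_\ell\in X$ and $\kappa_\ell\in\Hom(U,\ker f)$, and define $g\in\End(U^{(L)})$ by the universal property of the direct sum via $g\iota_\ell:=\xi_\ell$. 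Since $f\kappa_\ell=0$, one obtains $fg=f$, so the covering property of $f$ forces $g$ to be an automorphism. The $S$-linear map on $S^{(L)}=\Hom(U,U^{(L)})$ induced by left composition with $g$ is therefore an automorphism carrying the standard generating set $\{\iota_\ell\}_\ell$ onto $\{\xi_\ell\}_\ell\subseteq X$, forcing $X=S^{(L)}$.

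Once $f_*$ is identified as a projective cover of $(S/K)^{(|J|)}$, uniqueness of projective covers matches $\ker f_*$ with $K^{(|J|)}\subseteq S^{(|J|)}$, which must therefore be superfluous in $S^{(|J|)}$. A routine restriction to countably infinite support gives superfluousness of $K^{(\aleph_0)}$ in $S^{(\aleph_0)}$, and Bass's theorem then yields $T$-nilpotency of $K$. For cyclic $U$ this is precisely the local $T$-nilpotency of the family $\{U_i\mid i\ge 1\}$, and Lemma~\ref{localy} delivers both that $\Add(U)$ is covering and that it is closed under direct limits. The hard part is the superfluousness step, since that is the single place where the covering property of $f$ is brought to bear; everything else is a formal translation of Bass's characterization of perfect rings through the functor $\Hom(U,-)$, valid here because $U$ is finitely generated and $\Add(U)$ is trivial.
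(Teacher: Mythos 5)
Your proof is correct, but it takes a genuinely different route from the paper's. The paper argues by contradiction inside $\Mod R$: assuming the constant family $\{\,U_i\mid i\ge 1\,\}$ is not locally $T$-nilpotent, it invokes Yamagata's theorem to produce an $h\in\End(U^{(X)})$ with $\pi_xhi_y\in K$ for all $x,y$ but $h\notin J(\End(U^{(X)}))$, and then, using $\Hom(U,U/M)K=0$ and the finite generation of $U$, it builds a non-unit $1-k$ with $\varphi(1-k)=\varphi$, contradicting the covering property. You instead transport the whole problem to $\Mod S$ via $\Hom(U,-)$: since $U$ is cyclic and $\Add(U)$ is trivial, the hypothesized cover yields a projective cover $f_*\colon S^{(L)}\to\Hom(U,N)\cong(S/K)^{(|J|)}$ of right $S$-modules; restricting to countable support and applying Bass's characterization of $T$-nilpotency by superfluousness of $K^{(\N)}$ in $S^{(\N)}$ gives $T$-nilpotency of $K$, which for cyclic $U$ is exactly local $T$-nilpotency of $\{\,U_i\,\}$. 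Your superfluousness argument for $\ker f_*$ (lift a supplement, use the universal property to get $g$ with $fg=f$, conclude $g$ is an automorphism) is correct and is the genuine point of contact with the cover hypothesis. The one place that is a bit too quick is the appeal to ``uniqueness of projective covers'' to conclude that $K^{(|J|)}$ is superfluous in $S^{(|J|)}$: uniqueness compares two projective covers, and it is not given a priori that the canonical surjection $S^{(|J|)}\to(S/K)^{(|J|)}$ is one. The clean fix is to note that since the target is semisimple and $S$ is local, the superfluous kernel of the projective cover $f_*$ must equal the radical of the source, $\ker f_*=S^{(L)}K=K^{(L)}$, whence $|L|=|J|$ by a dimension count over the division ring $S/K$; this identifies the two pairs in one step, and the rest of your argument proceeds unchanged. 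In short, the paper stays concrete in $\Mod R$ and leans on Yamagata's exchange-theoretic machinery, while your approach reduces to Bass's perfect-ring circle of ideas over $\End(U)$, which is more conceptual and makes the appearance of $T$-nilpotency essentially automatic, at the modest cost of a careful translation between $\Add(U)$-covers and projective covers over $S$.
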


\begin{proof}
Since $S$ is local, $\Add(U)$ is trivial, and therefore the $\Add(U)$-cover of $N$ is of form 
$U^{(X)}$ for some index set $X$. Let $ \varphi \colon  U^{(X)} \to N$ be the $\Add(U)$-cover of 
$N$. Since there is an epimorphism   $p\colon  U ^{(J)} \to N $, we see that $\varphi$ must be an epimorphism. 
So $X$ can not be finite. 
If  $\{\,U_i\mid i \geq 1\,\} $  is not locally $T$-nilpotent, then  $\{\,U_x\mid x \in X \,\}$ 
is not locally $T$-nilpotent. By \cite [Theorem 1.1]{Pacefic}, there exists 
$h \in \End (U^{(X)}) $ such that $\pi _x h i_y $  is in $K$ for any $x, y \in X$, 
 but $h$ is not in $J (\End (U^{(X)}) $ (here  $i_y$ is the embedding of $U $ onto the $y$-th component of $U^{(X)}$ and 
$\pi_x$ is the canonical projection from $U^{(X)}$ to $U_{x} $). 
There exists $h'\in \End  (U^{(X)})$ such that $1-h'h$ is not invertible. Since $U$ is finitely generated, for each 
$y$ there exist finitely many $t_1,\dots,t_n\in X$ 
 such that  Image$(h i_y) \leq \sum_{m=1}^n i_{t_m} (U)$. Therefore,   for 
each $x$, $\pi_x h'h i_y =  \sum_{m = 1} ^ n  \pi_x h' i_{t_m} \pi _{t_m} h i_y $,
 and so   $\pi_x h'h i_y  \in K$.  Set $k := h' h$.  
Since $U$ is finitely generated, we have 
$  \Hom (U, N) \cong   \Hom (U, U/M) ^{(J)} $, and since   $ \Hom (U, U/M) K = 0$, we get that 
$  \Hom (U, N) K = 0$. 
 Therefore 
$ \varphi k i_x  = \sum _{y\in X} \varphi  i_y \pi _y  k i_x  = 0$. 
So $\varphi (1-k) = \varphi$ and $1-k$ is not invertible, which is a contradiction.
 Now the rest of the proposition  follows from Lemma~\ref{localy}. 
\end{proof}

\begin{lemma}\label{type 2}
If $U$ is a non-zero finitely generated uniserial module and its endomorphism ring is not local, then 
$\Add(U)$ is not a covering class. 
\end{lemma}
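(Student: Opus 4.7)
The plan is to derive a contradiction from the assumption that $\Add(U_R)$ is a covering class by showing that the simple top $N := U/M$ of $U$ (where $M$ is the unique maximal submodule, which exists since $U$ is finitely generated uniserial) admits no $\Add(U)$-cover.

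Since $S$ is not local, the two completely prime ideals $I$ and $K$ of $S$ are incomparable and hence $I+K=S$; I would fix a decomposition $1=i+k$ with $i\in I$ and $k\in K$. The key computation is that $\Hom(U,U/M)\cdot K=0$: for $\kappa\in K$ the image $\kappa(U)$ is a proper submodule of the uniserial $U$ and is therefore contained in $M$, whence $h\circ\kappa=0$ for every $h\in\Hom(U,U/M)$. Consequently $h\cdot i=h\cdot(1-k)=h$ for every such $h$.

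Now suppose, for contradiction, that $f\colon P\to N$ is an $\Add(U)$-cover. By the third author's classification of $\Add(U)$ for uniserial modules, $P\cong U^{(X)}\oplus V^{(Y)}$ for suitable sets $X,Y$ and a uniserial module $V$. Writing $\iota_x$ and $j_y$ for the canonical embeddings into $P$, I would define $g\in\End(P)$ to act componentwise as $i$ on $U^{(X)}$ and as the identity on $V^{(Y)}$, i.e.\ $g\iota_x=\iota_x\circ i$ and $gj_y=j_y$. The identity $h\cdot i=h$ applied to each $f\iota_x\in\Hom(U,U/M)$ yields $fg\iota_x=f\iota_x$, while $fgj_y=fj_y$ is immediate, so $fg=f$. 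But $g|_{U^{(X)}}$ is not injective, since $i\in I$ is not injective, so $g$ is not an automorphism of $P$ (provided $X\neq\emptyset$), contradicting the cover property.

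The delicate point is the degenerate case $X=\emptyset$, in which $P=V^{(Y)}$. I would rule it out as follows. When $\Hom(V,U/M)=0$ (i.e.\ the simple tops of $V$ and $U$ are non-isomorphic), the canonical epimorphism $\pi\colon U\to U/M$ is itself an $\Add(U)$-precover, so the cover $P$ is a direct summand of $U$; indecomposability of the uniserial $U$ then forces $P\cong U$, and $f$ agrees up to an automorphism of $U/M$ with $\pi$, whose stabilizer $\{g\in S:\pi g=\pi\}=1+K$ contains the non-unit $1-k=i$, contradicting that $f$ is a cover. When $\Hom(V,U/M)\neq 0$, an analogous computation shows $\Hom(V,U/M)$ is right-annihilated by the non-epi ideal of $\End(V)$, and a parallel construction using a decomposition $1=i_V+k_V$ in $\End(V)$ produces the required non-automorphism of $V^{(Y)}$ fixing $f$. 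Carrying out this second sub-case cleanly, and in particular confirming that the companion $V$ carries enough non-invertible endomorphism structure for the $V$-side trick to go through, is the main technical obstacle.
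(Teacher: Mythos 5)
Your central argument—fix $1=i+k$ with $i\in I$, $k\in K$; note $\Hom(U,U/M)\cdot K=0$ since non-epi endomorphisms of $U$ land inside $M$ and every nonzero $h\colon U\to U/M$ kills $M$; then build an endomorphism of the cover that acts as $i$ componentwise and is hence a non-automorphism fixing $f$—is exactly the paper's argument, so the core is sound and identical in spirit.

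Where you go astray is in treating the $V^{(Y)}$ component and the degenerate case $X=\emptyset$ as a genuine obstacle. The paper's classification of $\Add(U)$ (the cited result of the third author) says that when $\Add(U)$ is non-trivial, the companion $V$ is a non-quasismall uniserial module, and a non-quasismall uniserial module is \emph{never} finitely generated, hence has no maximal submodule and admits no nonzero morphism onto a simple module. Thus $\Hom(V,U/M)=0$ \emph{always}, not just in your first sub-case. Your ``second sub-case'' $\Hom(V,U/M)\neq 0$—which you flag as the main technical obstacle—simply cannot occur, and the whole parallel construction on the $V$-side is unnecessary. Moreover, once $\Hom(V,U/M)=0$ is in hand, the cover can be taken of the form $\varphi\colon U^{(X)}\to U/M$ from the start (the $V^{(Y)}$ summand lies in $\ker\varphi$, so retaining it would contradict minimality of the cover), and $X\neq\emptyset$ is immediate because $\varphi$ must be onto the nonzero module $U/M$ (the projection $U\to U/M$ factors through it). So the delicate case you worry about evaporates, and your resolution of the first sub-case via a ``direct summand of $U$'' detour is more elaborate than needed: with $X=\emptyset$ and $\Hom(V,U/M)=0$ one gets $\varphi=0$, contradicting surjectivity.

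In short: your proof is on the right track and essentially matches the paper's, but you need the fact that the companion $V$ is not finitely generated (so $\Hom(V,\text{simple})=0$); plugging that in dissolves the sub-case analysis you were unsure about.
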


\begin{proof}
Let 
$N$ be the maximal submodule of $U$. Suppose that $U/N$ has an $\Add(U)$-cover. 
If $\Add(U)$ is not trivial, then there exists a uniserial non-quasismall  module $V$ such that every element of  
$\Add(U)$ is of form $U^{(X)} \oplus V^{(Y)} $ \cite{Pavel}. A non-quasismall uniserial module is not finitely generated, so 
$\Hom(V, U/N)$ is zero. Therefore, in both cases $\Add(U)$ trivial or non-trivial, the 
$\Add(U)$-cover of $U/N$ is of form $ \varphi \colon U^{(X)} \to U/N$ for a suitable index set $X$. The endomorphism
$S$  has two maximal ideals $I$ and $K$ and $I + K = S$. So $1 = i + k$ for some $i\in I$ and $k \in K$. 
For every $x\in X$, let $i_x$ denote the canonical mapping from 
$U$ to $U^{(X)}$. Consider the map $h \colon  U^{(X)} \to U^{(X)} $ such that $h (i_x (u)) = i_x (i(u))$ for every $u\in U$ and every $x\in X$.
Since  $\Hom (U, U/N) K = 0$, we see that $\varphi i_x = \varphi i_x i$, that is $\varphi h = \varphi$. 
 But $h$ is not a monomorphism, because $i$ is not, and this is a contradiction. 
\end{proof}

We are ready for the first two of the main results of this paper: Corollary~\ref{f.g} and Theorem~\ref{3.4}.

\begin{corollary}\label{f.g}
Let $U_R$ be a finitely generated uniserial module. If $\Add(U)$ is covering, then it is closed under direct limit.   
\end{corollary}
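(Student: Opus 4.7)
The plan is to combine Lemma \ref{type 2}, Theorem \ref{Ue}, and Proposition \ref{noetheriancase} to reduce the corollary to an application of the last of these. We may assume $U_R\neq 0$, since otherwise $\Add(U)=\{0\}$ is trivially closed under direct limits.

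First, since $\Add(U)$ is covering, Lemma \ref{type 2} forces $S=\End(U_R)$ to be local; equivalently, $U_R$ is of type~$1$, so $I$ and $K$ are comparable and the maximal ideal of $S$ is the larger of the two.

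Next, I would show that in fact $I\subseteq K$, so that $K$ is the maximal ideal. Because $U_R$ is of type $1$ and $\Add(U)$ is covering, Theorem \ref{Ue} gives $U_e\in\{0,U\}$. Writing $U=uR$ for a generator $u$, any endomorphism of $U$ whose kernel contains $u$ is the zero map, which is not an epimorphism since $U\neq 0$; hence $u\notin U_e$, ruling out $U_e=U$. Therefore $U_e=0$, so every epi endomorphism of $U$ has trivial kernel, i.e., is monic. Equivalently, $S\setminus K\subseteq S\setminus I$, that is $I\subseteq K$, so the maximal ideal of $S$ equals~$K$.

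Finally, pick any infinite $J$ with $|J|\geq|U|$. The hypotheses of Proposition \ref{noetheriancase} are now satisfied, and since $\Add(U)$ is covering the module $(U/M)^{(J)}$ has an $\Add(U)$-cover. Proposition \ref{noetheriancase} then yields that the family $\{U_i\mid i\geq 1\}$ with $U_i=U$ is locally $T$-nilpotent, and Lemma \ref{localy} delivers the desired conclusion that $\Add(U)$ is closed under direct limit. The only subtle step is the middle one identifying $K$ as the maximal ideal of $S$; once the finite generation of $U$ is used to exclude $U_e=U$, the alignment with the hypotheses of Proposition \ref{noetheriancase} is immediate, and all the real work has already been done in the three cited results.
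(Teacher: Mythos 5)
Your proof is correct, and it reaches the same skeleton as the paper (Lemma~\ref{type 2} $\Rightarrow$ $S$ local; show $U_e=0$; apply Proposition~\ref{noetheriancase}), but your middle step takes a genuinely different route. The paper establishes $U_e=0$ directly: assuming $U_e\ne 0$, it takes an epimorphism $f\in S$ that is not monic, observes that every non-zero $g\in\Hom(U,U/M)$ has kernel exactly $M\supseteq\ker f$, hence factors through $f$, so that $\Hom(U,U/M)=\Hom(U,U/M)I$; by Lemma~\ref{maximalsubmodule} this module then has no maximal submodule and hence no minimal generating set, contradicting the fact that covering plus self-small plus $\Add(U)$ trivial forces one to exist. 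You instead invoke Theorem~\ref{Ue} to narrow down to $U_e\in\{0,U\}$, and then rule out $U_e=U$ by the clean elementary observation that a generator $u$ of the cyclic module $U$ cannot lie in the kernel of any non-zero map, let alone an epimorphism, so $u\notin U_e$. This is a nice shortcut: it reuses the already-proved Theorem~\ref{Ue} rather than repeating essentially the same $\Hom$-module computation with $U/M$ in place of $U/U_e$, at the cost of depending on the slightly more general theorem. Your additional remark that $U_e=0$ forces $I\subseteq K$ so that the maximal ideal of $S$ (which is always $I\cup K$) equals $K$ correctly lines up the hypotheses of Proposition~\ref{noetheriancase}; the paper leaves this identification implicit.
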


\begin{proof}
By Lemma~\ref {type 2}, the uniserial module $U_R$ is of type $1$. Therefore either every monic endomorphism of $U_R$ is an isomorphism or every onto endomorphism of $U_R$ is an isomorphism. In both cases, we can apply \cite[Theorem~1.1(i)]{Pavel}, getting that $\Add(U)$ is trivial. Let $M$ be the maximal submodule of $U$. If $U_e \neq 0$, then there exists an epimorphism  $f\in S$ that is not monic. Let us show that $\Hom (U, U/M) = \Hom (U, U/M)f$. If  $g \in \Hom (U, U/M)$  is non-zero, then  $\ker (g) = M$. In particular, $\ker (f) \subseteq \ker (g)$ and, since $f$ is onto,
there exists a $\widetilde{g} \in \Hom(U,U/M)$ such that $g = \widetilde{g} f \in \Hom(U,U/M) f$.  
Therefore $\Hom (U, U/M) =  \Hom (U, U/M)I$, where $I$ is the maximal ideal of
$\End(U)$,  that is, by Lemma \ref{maximalsubmodule}, 
  $\Hom(U, U/M)$  is without maximal submodules, and so without minimal generating set, which contradicts the assumption that $\Add(U)$ is covering (Remark \ref{remark1.3}). 
So $U_e = 0$, and thus we can apply 
Proposition \ref{noetheriancase}. In particular,  $\Add(U)$ is closed under direct limit.  
\end{proof}

\begin{theorem}\label{3.4}
Let $U$ be a uniserial module of type $1$ such that $S = \End(U)$ contains a monomorphism 
which is not onto. Then there exists a uniserial module $V$ without an 
$\Add(U)$-cover.  
\end{theorem}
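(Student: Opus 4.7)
The plan is to construct a uniserial $V$ (depending on $U$) for which $\Hom(U,V)_S$ has no minimal weakly generating set, by showing that $\Hom(U,V)\cdot K = \Hom(U,V)$. Combined with Theorem~\ref{mgset}(3) and Lemmas~\ref{minmax} and~\ref{maximalsubmodule}, this precludes any $\Add(U)$-cover of $V$.

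First I would observe that the hypothesis (a monic non-epi $f\in S$, together with $U$ of type $1$, so that $I$ and $K$ are comparable) forces $I\subsetneq K$, so $S$ is local with maximal ideal $K$. In particular $\Add(U)$ is trivial by \cite[Theorem~2.52]{libro}, so every hypothetical $\Add(U)$-cover of $V$ is of the form $\varphi\colon U^{(X)}\to V$. By Theorem~\ref{mgset}(3), the family $\{\varphi i_x:x\in X\}$ would be a minimal weakly generating set for $\Hom(U,V)_S$; applying Lemma~\ref{minmax} and Lemma~\ref{maximalsubmodule} gives $\Hom(U,V)\cdot K\subsetneq \Hom(U,V)$. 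Hence proving the reverse equality yields a contradiction.

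My candidate is $V := L = \varinjlim\bigl(U\xrightarrow{f}U\xrightarrow{f}U\xrightarrow{f}\cdots\bigr)$ with canonical maps $\iota_n\colon U\to V$. Because each transition is injective, $V$ is uniserial with $V=\bigcup_n \iota_n(U)$ and $\iota_n(U)\subsetneq \iota_{n+1}(U)$ for all $n$; the defining relation $\iota_n=\iota_{n+1}\circ f$ already places every $\iota_n$ in $\Hom(U,V)\cdot f\subseteq \Hom(U,V)\cdot K$. For any $g\colon U\to V$ that is not surjective, uniseriality of $V$ and the cofinality of the chain $\{\iota_n(U)\}$ in $V$ force $g(U)\subseteq \iota_n(U)$ for some $n$. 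Since $\iota_n$ is injective, one obtains a unique $g'\in S$ with $g=\iota_n g'$, so $g=\iota_{n+1}(fg')\in \Hom(U,V)\cdot K$, because $K$ is a two-sided ideal of $S$ containing $f$.

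The principal obstacle is handling the case of a surjective $g\colon U\to V$: such a $g$ realizes $V$ as a proper epimorphic image of $U$, which is possible only when $U$ admits a strictly ascending chain of proper submodules with union $U$ and $L$ happens to be of the form $U/\ker(g)$. To defuse this uniformly, I would replace $L$ by the direct limit indexed by an uncountable regular cardinal $\kappa > |U|$ (iterating $f$ at successor stages and taking colimits at limits). Then $|V|\geq \kappa>|U|$ rules out any surjection $U\to V$ on cardinality grounds, while regularity of $\kappa$ ensures that each subset of $V$ of cardinality at most $|U|$ lies in the image $\iota_\alpha(U)$ of some colimit map; consequently every $g\colon U\to V$ falls into the non-surjective case treated above, and the equality $\Hom(U,V)\cdot K=\Hom(U,V)$ follows, providing the required contradiction.
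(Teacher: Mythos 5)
Your overall strategy --- showing $\Hom(U,V)K=\Hom(U,V)$ and then invoking Theorem~\ref{mgset}(3) together with Lemmas~\ref{minmax} and~\ref{maximalsubmodule} --- is a genuinely different, and in principle viable, route from the paper's (which instead derives a contradiction to minimality directly by producing three monomorphisms $x_1,x_2,x_3\in X$ with nested images and showing $x_2\in x_1S+x_3S$). Your reduction of the non-surjective case is correct: if $g\colon U\to V$ is not onto, uniseriality forces $g(U)\subseteq\iota_n(U)$ for some $n$, and then $g=\iota_{n+1}(fg')$ with $fg'\in K$. The problem is entirely in how you propose to dispose of a possible epimorphism $g\colon U\to V$.

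The transfinite direct system you describe is not well defined. If you set $U_\alpha:=U$ at all stages with transition $f$ at successors, there is no canonical way to define the transition maps $f_{\lambda,\gamma}$ \emph{into} a limit stage $\lambda$ unless you take $U_\lambda$ to be the colimit of the earlier stages --- but that colimit need not be isomorphic to $U$, in which case you cannot ``apply $f$'' to pass to stage $\lambda+1$. In fact the paper's construction is exactly this one, and it proves that the process \emph{must} terminate at some limit ordinal $\beta$ where the colimit $V$ is not isomorphic to $U$ (because $U_\alpha$ contains a chain of proper submodules of length $\alpha$). This $\beta$ can be as small as $\omega$, so the resulting $V$ need not have cardinality exceeding $|U|$, and the cardinality/regularity argument never gets off the ground. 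Moreover, if you simply stopped at $\omega$ without checking, $L$ could be isomorphic to $U$, in which case $\Hom(U,L)\cong S$ and $\Hom(U,L)K=K\subsetneq S$, so your key identity fails outright.

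The gap is, however, repairable without any cardinality considerations, and then your route closes cleanly. Use the paper's construction to get a uniserial $V\ncong U$ that is a union of a strictly increasing chain of submodules $\iota_\alpha(U)\cong U$ with $\iota_\alpha=\iota_{\alpha+1}\circ(\text{mono non-epi})$. Then there is \emph{no} epimorphism $U\to V$ at all: if $m\in\Hom(U,V)$ is a non-epi mono (such as $\iota_0$) and $e\in\Hom(U,V)$ were a non-mono epi, the standard uniserial computation gives $\ker(m+e)=\ker m=0$ and $(m+e)(U)=e(U)=V$, so $m+e$ would be an isomorphism $U\to V$, contradicting $U\ncong V$. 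With that observation in hand, every $g\in\Hom(U,V)$ is non-surjective, your factorization argument applies to all of them, $\Hom(U,V)K=\Hom(U,V)$ follows, and the contradiction with the existence of an $\Add(U)$-cover is exactly as you set it up in your first paragraph.
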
 

\begin{proof}
We claim there exists a limit ordinal $\beta$ and a direct system $(U_{\alpha})_{\alpha < \beta}$
such that
\begin{enumerate}
\item[1.] Each module $U_{\alpha}$ is isomorphic to $U$.
\item[2.] For each $\gamma < \alpha < \beta$ the homomorphism $f_{\alpha,\gamma} \colon U_{\gamma} \to 
U_\alpha$ is injective and not surjective. 
\item[3.] If $\alpha$ is limit and $\alpha < \beta$, then $U_{\alpha}$ is a direct limit of 
the system $(U_{\gamma})_{\gamma < \alpha}$.
\item[4.] The direct limit of the system $(U_{\alpha})_{\alpha < \beta}$ is not isomorphic to $U$.
\end{enumerate}

The construction of the direct system begins with $U_{0} := U$. If $U_{\alpha}$ and $ f_{\gamma,\delta} \colon 
U_{\delta}\to U_{\gamma}$ have been defined for every $\delta<\gamma < \alpha$, set
$U_{\alpha+1}:=U$, $f_{\alpha+1,\gamma}:= ff_{\gamma}$ for every $\gamma<\alpha$ and 
$f_{\alpha+1,\alpha} = f$, where $f \in \Hom(U_{\alpha},U_{\alpha+1})$ is injective and not 
surjective.
Assume that $\alpha$ is limit and the system $(U_{\gamma})_{\gamma < \alpha}$ has been defined. 
If the direct limit of this system is not isomorphic to $U$, set $\beta := \alpha$ and 
conclude. Otherwise, $(U_{\alpha},(f_{\alpha,\gamma})_{\gamma < \alpha})$ is defined by the 
direct limit of the system $(U_{\gamma})_{\gamma < \alpha}$.

Notice that each $U_\alpha$ contains a chain of proper submodules of length $\alpha$, so 
the construction must terminate.

Let $V$ be the direct limit of the constructed system. Then $V$ is a uniserial module which is not 
isomorphic to $U$, so $\Add(U)$ is not closed under direct limit. On the other hand, 
$V$ is a union of an infinite chain of submodules isomorphic to $U$.

Thus it is enough to show that $\Hom(U,V)$ does not contain a minimal weakly 
generating set. Assume that $X$ is a minimal weakly generating set. 
First observe that $X$ must contain a monomorphism: Let $f\in \Hom(U,V)$ 
be a monomorphism and assume $f = \sum_{x \in X} xs_x$ for some summable family indexed in $X$. Let $u\ne 0$ be an element of $U$. Then there exists a finite set  $X_u\subseteq X$
such that $s_x(u) = 0$ for any $x \in X\setminus X_u$. Then 
$f = \sum_{x \in X_u} xs_x+g$, where $g = \sum_{x \in X \setminus X_u} xs_x$ is not a monomorphism. 
Since $f$ is a monomorphism, there exists $x \in X_u$ which is a monomorphism. 

Moreover, $U$ and $V$ are not isomorphic and $\Hom(U,V)$ contains a monomorphism, so that 
there is no epimorphism in $\Hom(U,V)$. 
Observe that if $f,g \in \Hom(U,V)$ are monomorphisms, then $fS \subseteq gS$ or 
$gS \subseteq fS$. Indeed, let $U \stackrel{\pi_g}{\leftarrow} X\stackrel{\pi_f}{\rightarrow} U$
be a pullback of $U \stackrel{g}{\rightarrow} V \stackrel{f}{\leftarrow} U$. That is 
$f\pi_{f} = g \pi_g \colon X \to V$. Then $\pi_f,\pi_g$ are monomorphisms. If $f(U) \subseteq g(U)$, 
then $\pi_f$ is onto and $f = g \pi_g (\pi_f)^{-1} \in gS$. If $g(U) \subseteq f(U)$, then
$\pi_g$ is onto and $g = f\pi_f (\pi_g)^{-1} \in fS$.
On the other hand, $V$ is a union of submodules isomorphic 
to $U$, hence also $V = \bigcup_{x \in X} x(U)$. Then $X$ contains three elements 
$x_1,x_2,x_3$ such that $x_1$ is a monomorphism and $x_1(U) \subset x_2(U) \subset x_3(U)$.
For $i = 2,3$, let $y_i\in \{x_i,x_1+x_i\}$ be a monomorphism. Note  that 
$y_i(U) = x_i(U)$, and so $y_2 \in y_3S$. It follows that $x_2 \in x_1S + x_3S$, thus  $X$ is not minimal.   
\end{proof}

\section {The general case of artinian uniserial modules}\label{artinianuniserial}\label{4}

{\em In this section, $U$  will always be a non-zero artinian uniserial module. }
Therefore there exists an ordinal number 
$\alpha$ such that the chain of submodules of $U$ is 
$0 < U_1 < \dots < U_\alpha = U$. In this case, we will say that $U$ is of {\em length $ \alpha  + 1$.}  Clearly, $U$ is finitely generated if and only if 
$ \alpha$  is a successor, because in this case $U$ has a maximal submodule. 
The case of a finitely generated module $U$ was studied in Corollary \ref{f.g}. Therefore, we will now consider the case of $\alpha$ a limit ordinal. 

We will show that the behavior of the artinian uniserial module $U_R$ is mainly described, for our aims, by two ordinal numbers: the ordinal $\alpha$ (=the ordinal number such that $U_R$ is of length $\alpha+1$, that is, the lattice of submodules of $U_R$ is $0=U_0<U_1<\dots<U_\alpha=U_R$) and the ordinal $\beta$, defined by $\beta:=\min \{\,\gamma\mid \gamma\le\alpha,\ U_\gamma=\ker f$ for some $f\in S$, $\ker f\ne 0\,\}$. Thus we have a pair $(\alpha,\beta)$ of ordinal numbers attached to any artinian uniserial module $U_R$. We have that:

\medskip

(1) $\beta\le\alpha$.

\medskip

(2) $U_R$ is finitely generated if and only if $U_R$ is cyclic, if and only if the ordinal $\alpha$ is not a limit ordinal, if and only if cf$(\alpha)=1$, where cf$(\alpha)$ denotes the cofinality of $\alpha$.

\medskip

(3) $U_R$ is countably generated if and only if cf$(\alpha)\le\aleph_0$.  In particular, if cf$(\alpha)>\aleph_0$ or cf$(\alpha)=1$, then $U_R$ is small.

\medskip

(4) The following conditions are equivalent:
\begin{enumerate}
\item[(a)] $\beta \omega\ge\alpha.$
\item[(b)]  For any set $I$, the family $(U_i)_{i\in I}$,  where $U_i=U_R$ for every $i\in I$, is locally $T$-nilpotent.
\end{enumerate}
Moreover, if any of these two equivalent conditions holds, then $\Add(U)$ is a covering class.

\begin{proof} 
(a)${}\Rightarrow{}$(b). Suppose $\beta \omega\ge\alpha$. Let $x$ be a non-zero element of $U=U_\alpha$. Then $xR=U_\tau$ for some non-limit ordinal $\tau\le\alpha$ and $\beta\omega$ is a limit ordinal. Therefore $\tau<\beta\omega$. It follows that there is a finite ordinal $n$ such that $\tau\le\beta n$. Let $n$ be the smallest such finite ordinal, so that $\beta(n-1)<\tau\le\beta n$. Let us prove by induction on $t$ that $f_t\dots f_1(U_{\beta t})=0$ for every $t=1,2,\dots,n-1$ and every sequence $f_1,f_2,f_3,\dots$ of non-isomorphisms in $\End(U_R)$. We have that $f_1(U_\beta)=0$ because $U_\beta$ is contained in the kernel of all non-isomorphisms $U\to U$. Assume $f_t\dots f_{2}(U_{\beta (t-1)})=0$. It suffices to prove that $f_1(U_{\beta t})\subseteq U_{\beta(t-1)}$. Let $U_\delta$ be the kernel of $f_1$, so that $\delta\ge\beta$. By the correspondence theorem for submodules, there is a one-to-one correspondence $$\begin{array}{ccc}\Cal A:= \{\,U_\varepsilon\mid \delta\le\varepsilon\le\alpha\}& \leftrightarrow & \Cal B:=\{\, U_\vartheta\mid \vartheta\le\alpha,\ U_\vartheta\subseteq f_1(U)\,\}\\
X\in\Cal A&\mapsto & f_1(X)\in\Cal B\\
f_1^{-1}(Y) &\leftmapsto&Y\in\Cal B\\
U_{\delta+\vartheta} & \leftmapsto &U_\vartheta\in\Cal B.\end{array}$$ 
Assume $U_{\beta(t-1)} \subseteq f_1(U)$. Then $U_{\beta(t-1)}\in\Cal B$ corresponds to $U_{\delta+\beta(t-1)}\supseteq 
U_{\beta+\beta(t-1)}\supseteq U_{\beta t}
$. Thus $f_1(U_{\beta t})\subseteq U_{\beta(t-1)}$. This concludes the proof by induction on $t$. In particular, we get, for $t=n-1$, that $f_n\dots f_2(U_{\beta (n-1)})=0$. Similarly, the one-to-one correspondence above shows that  $f_1(U_\tau)\subseteq U_{\beta (n-1)}$. Thus $f_n\dots f_1(U_\tau)=0$, i.e.,  $f_n\dots f_1(x)=0$.

(b)${}\Rightarrow{}$(a).
Suppose $\beta \omega<\alpha.$ Let $f$ be an endomorphism of $U_R$ with $\ker(f)= U_\beta$. Consider the sequence $U\stackrel{f}{\rightarrow}U\stackrel{f}{\rightarrow}U\stackrel{f}{\rightarrow}\cdots$. Let $x$ be a generator of the cyclic module $U_{\beta\omega+1}\subseteq U_\alpha$. Let us show that $f^n(x)\ne 0$ for every $n\ge 1$. It suffices to show that $\ker(f^n)=U_{\beta n}$ for every $n\ge 1$. We prove this by induction on $n$. The case $n=1$ holds trivially. Suppose $U_{\beta(n-1)}=\ker(f^{n-1})$. By the correspondence theorem for submodules, there is a one-to-one correspondence $\{\,U_\varepsilon\mid \beta\le\varepsilon\le\alpha\} \leftrightarrow \{\, U_\vartheta\mid \vartheta\le\alpha,\ U_\vartheta\subseteq f(U)\,\}$. Note that if $f(U) \subseteq U_{\beta(n-1)}$, then 
$\alpha \leq \beta + \beta(n-1) = \beta n$, which is not possible. Hence  $U_{\beta(n-1)} \subseteq f(U)$,
so that $\ker(f^n)=f^{-1}(\ker(f^{n-1}))=f^{-1}(U_{\beta(n-1)})=U_{\beta+\beta(n-1)}=U_{\beta n}$.

Finally, if any of these two equivalent conditions holds, the class $\Add(U)$ is covering by Lemma~\ref{localy}.
\end{proof}

 As a consequence of (4), when $\alpha=\omega$, the class $\Add(U_R)$ is always covering.
 
\medskip

(5) $S$ is division ring if and only if $\alpha=\beta$.

\medskip

(6) Assume that the ordinal $\alpha$ is additively indecomposable, i.e., of the form $\omega^\gamma$ for some ordinal $\gamma$. Then every non-zero endomorphism of $U_R$ is surjective and $S$ is a left chain domain.

\begin{proof}
An ordinal number 
$ \alpha$  is an additively indecomposable ordinal, that is, 
$\beta,\beta' < \alpha$ implies  $\beta+ \beta' < \alpha$, if and only if $\alpha=\omega^\gamma$ for some ordinal $\gamma$. 
It follows that every non-zero endomorphism of $U$
 is surjective (because (1) non-zero endomorphism means with a kernel $U_\beta$, where  
  $\beta < \alpha$, and (2)
 the first isomorphism theorem says that $\beta +  \beta' = \alpha,$ where $U_
 {\beta'}$  is the image of the endomorphism, so we must have $\beta'=\alpha,$
 i.e., the morphism is surjective.) As the composite mapping of two surjective mappings is a surjective mapping, we get that
 $S$ is a domain. 
We will now show that $S$ is a left chain ring. 
Take any two non-zero endomorphisms $\varphi , \psi $ of  $U.$ Then 
$\varphi , \psi $ are both surjective, and without loss of generality we can suppose that $\ker \varphi\subseteq \ker\psi$. Since $\varphi$ is onto there exists $\tau \in S$ such that 
$\psi = \tau \varphi$, i.e.,\linebreak $\psi \in S \varphi$.\end{proof}

(7) The Jacobson radical $J(S)$ is nilpotent if and only if $\beta n\ge\alpha$ for some finite ordinal $n$.

\bigskip

As we have just seen, the pair of ordinal numbers $(\alpha,\beta)$ allows us to describe some properties of the artinian uniserial module $U_R$ and its endomorphism ring $S$. We now refine this idea.

It is easily seen that, for every fixed ordinal $\alpha$, the set $\alpha+1$ of all ordinals $\le\alpha$ is a monoid with respect to the operation $\oplus\colon (\alpha+1)\times(\alpha+1)\to \alpha+1$ defined, for every $\gamma,\delta\le\alpha$, by $\gamma\oplus\delta:=\min\{\gamma+\delta,\alpha\}$. The ordinal $0$ is the identity of the monoid. The ordinal $\alpha$ is a zero element of the monoid. The monoid $(\alpha+1,\oplus)$ is a left cancellative monoid with zero, in the sense that $\gamma\oplus\delta=\gamma\oplus\delta'\ne\alpha$ implies $\delta=\delta'$. There is a monoid antihomomorphism $\kappa\colon S\to\alpha+1$ that associates to every $f\in S$ the ordinal $\gamma\le\alpha$ such that $\ker(f)=U_\gamma$. (Recall that the submodules of $U_R$ are $0=U_0<U_1<\dots<U_\alpha$.) We extend the definition of $\kappa$ to all homomorphisms with domain $U$.
If $f \in \Hom(U,V)$, where $V$ is an arbitrary $R$-module, we define 
$\kappa(f) = \beta$ if $\ker(f) = U_{\beta}$.  
It is now easy to see that $\kappa$ is a monoid antihomomorphism (Lemma~\ref{3.1}).

\begin{Lemma}\label{3.1} Let $f\colon U_R\to V_R$ and $f'\colon V_R\to W_R$ be morphisms between artinian uniserial right $R$-modules $U_R,V_R,W_R$. If $\ker(f)=U_\beta$ and $\ker(f')=V_{\beta'}$, then $\ker(f'\circ f)=U_{\beta\oplus \beta'}$\end{Lemma}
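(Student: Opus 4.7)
The plan is to compute $\ker(f'\circ f)=f^{-1}(\ker f')=f^{-1}(V_{\beta'})$ by analysing the image of $f$ inside $V$. Since $V$ is uniserial, $V_{\beta'}$ and $f(U)$ are comparable, so $V_{\beta'}\cap f(U)$ is either $V_{\beta'}$ (case~A) or $f(U)$ (case~B); these two alternatives will turn out to correspond precisely to $\beta+\beta'\le\alpha$ and $\beta+\beta'>\alpha$, where $\alpha$ is the top index of $U$.

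The main technical step is to identify the submodules of $f(U)$. Since $\ker f=U_\beta$, the first isomorphism theorem gives $f(U)\cong U/U_\beta$, and the submodules of $U$ containing $U_\beta$ are $U_{\beta+\delta}$ for $0\le\delta\le\eta$, where $\eta$ is the unique ordinal with $\beta+\eta=\alpha$. Pulling back through $f$, the submodules of $f(U)$ are exactly the $f(U_{\beta+\delta})$ with $0\le\delta\le\eta$, and each of them has preimage $U_{\beta+\delta}$ under $f$. Moreover, as a submodule of the uniserial module $V$, $f(U)$ must equal $V_\eta$: the chains of submodules of $f(U)$ and of $V_\eta$ both have order type $\eta+1$, and in an artinian uniserial module the ordinal label of a submodule is determined by the order type of its own chain of submodules.

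In case~A, $V_{\beta'}\subseteq V_\eta=f(U)$ yields $\beta'\le\eta$, so $\beta+\beta'\le\alpha$ and $\beta\oplus\beta'=\beta+\beta'$. Matching chain types once more, $V_{\beta'}=f(U_{\beta+\beta'})$, and therefore $\ker(f'\circ f)=f^{-1}(V_{\beta'})=U_{\beta+\beta'}=U_{\beta\oplus\beta'}$. In case~B, $f(U)=V_\eta\subsetneq V_{\beta'}$ forces $\eta<\beta'$, so $\beta+\beta'>\alpha$ and $\beta\oplus\beta'=\alpha$; and then $\ker(f'\circ f)=f^{-1}(V_{\beta'})=f^{-1}(f(U)\cap V_{\beta'})=f^{-1}(f(U))=U=U_\alpha=U_{\beta\oplus\beta'}$.

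The only real subtlety, rather than a genuine obstacle, is the ordinal bookkeeping: one must invoke the fact that left addition $\delta\mapsto\beta+\delta$ gives an order-isomorphism from $\{0,\dots,\eta\}$ onto $\{\beta,\dots,\alpha\}$. This is what legitimises writing the submodules of $U$ above $U_\beta$ as $U_{\beta+\delta}$ and what lets the chain-length comparisons translate cleanly into genuine ordinal identities, such as $\delta_0=\beta'$ in case~A.
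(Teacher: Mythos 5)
Your proposal is correct and follows essentially the same route as the paper: reduce to $f^{-1}(V_{\beta'})$, invoke the correspondence theorem for submodules to match the chain $\{U_\varepsilon : \beta\le\varepsilon\le\alpha\}$ with the chain of submodules of $f(U)$, and split into the two cases $V_{\beta'}\subseteq f(U)$ versus $f(U)\subseteq V_{\beta'}$. The only difference is one of exposition: you spell out, via comparison of the order types of submodule chains, why the correspondence must send $U_{\beta+\delta}$ to $V_\delta$, whereas the paper states this identification directly.
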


\begin{proof} Trivially, $\ker(f'\circ f)=f{}^{-1}(\ker(f'))=f{}^{-1}(V_{\beta'})$.
By the correspondence theorem for submodules, the morphism $f$ induces a one-to-one correspondence $$\begin{array}{ccc}\Cal A:= \{\,U_\varepsilon\mid \beta\le\varepsilon\le\alpha\}& \leftrightarrow & \Cal B:=\{\, V_\vartheta\mid \vartheta\le\alpha',\ V_\vartheta\subseteq f(U)\,\}\\
X\in\Cal A&\mapsto & f(X)\in\Cal B\\
f^{-1}(Y) &\leftmapsto&Y\in\Cal B\\
U_{\beta\oplus\vartheta} & \leftmapsto &V_\vartheta\in\Cal B.\end{array}$$ 
If $f(U)\subseteq \ker(f')$, then $\alpha \leq \beta + \beta'$. Therefore  
$\ker (f'\circ f) = U_{\alpha} = U_{\beta \oplus \beta'}$. If $\ker(f') \subseteq f(U)$, 
the inverse image via $f$ of $\ker(f')=V_{\beta'}$ is $U_{\beta\oplus \beta'}$.\end{proof}

As $\kappa\colon S\to\alpha+1$ is a monoid antihomomorphism, its image $\kappa(S)$ is a submonoid with zero of the monoid $(\alpha+1,\oplus)$. The cardinal $\beta$ considered in the first part of this section is exactly $\beta=\min(\kappa(S)\setminus\{0\})$. It is in this sense that considering the submonoid $\kappa(S)$ of $(\alpha+1,\oplus)$ is a refinement of the idea of considering the pair $(\alpha,\beta)$.

Notice that the antihomomorphism $\kappa\colon S\to\alpha+1$ determines not only the kernel of any $f\in S$ ($\ker(f)=U_{\kappa(f)}$), but also the image of $f$, because $f(U_R)=U_\gamma$, where $\gamma$ is the unique ordinal such that $\kappa(f)+\gamma=\alpha$. Moreover, $\kappa(f)$ determines suitable factorizations of $f$, as the next lemma shows.

\begin{Lemma} Let $f\colon U_R\to V_R$ and $f'\colon U_R\to W_R$ be morphisms between artinian uniserial right $R$-modules $U_R,V_R,W_R$. Then $\kappa(f)\le\kappa(f')$ if and only if $f'=g\circ f|^{V_\gamma}$ for some morphism $g\colon V_\gamma\to W$. Here $f|^{V_\gamma}\colon U\to V_\gamma$ denotes the corestriction of $f$ to the image $V_\gamma$ of $f$.\end{Lemma}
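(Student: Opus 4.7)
The plan is to reduce the ordinal inequality $\kappa(f)\le\kappa(f')$ to the set-theoretic inclusion $\ker(f)\subseteq\ker(f')$ and then run a standard factor theorem argument, noting that $f|^{V_\gamma}$ is by construction surjective.

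First I would record the translation: by definition of $\kappa$, we have $\ker(f)=U_{\kappa(f)}$ and $\ker(f')=U_{\kappa(f')}$, and since the submodules of the uniserial module $U_R$ are linearly ordered with $U_\delta\subseteq U_\varepsilon$ exactly when $\delta\le\varepsilon$, the condition $\kappa(f)\le\kappa(f')$ is equivalent to $\ker(f)\subseteq\ker(f')$.

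For the ``if'' direction I would just substitute: assuming $f'=g\circ f|^{V_\gamma}$, any $u\in\ker(f)$ satisfies $f|^{V_\gamma}(u)=0$, hence $f'(u)=g(0)=0$, so $\ker(f)\subseteq\ker(f')$, giving $\kappa(f)\le\kappa(f')$ by the translation above.

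For the ``only if'' direction I would use the universal property of the quotient. The corestriction $f|^{V_\gamma}\colon U\to V_\gamma=f(U)$ is surjective with kernel $U_{\kappa(f)}$, so it induces an isomorphism $\bar f\colon U/U_{\kappa(f)}\to V_\gamma$. Under the hypothesis $\ker(f)\subseteq\ker(f')$, the morphism $f'\colon U\to W$ factors uniquely through the canonical projection $U\to U/U_{\kappa(f)}$ as $f'=\bar f'\circ\pi$. Defining $g:=\bar f'\circ\bar f^{-1}\colon V_\gamma\to W$, one gets $g\circ f|^{V_\gamma}=\bar f'\circ\bar f^{-1}\circ\bar f\circ\pi=\bar f'\circ\pi=f'$, as required. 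No step here is a real obstacle; the only point to be careful about is to factor through the image $V_\gamma$ rather than through $V$ itself, which is exactly why the statement uses the corestriction $f|^{V_\gamma}$ (otherwise existence of $g\colon V\to W$ would fail whenever $V_\gamma\subsetneq V$ and $W$ lacks a suitable extension).
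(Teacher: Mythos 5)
The paper states this lemma without proof, treating it as an immediate consequence of the factor theorem once one observes that $\kappa(f)\le\kappa(f')$ is the same as $\ker(f)\subseteq\ker(f')$. Your proof fills in exactly that argument: the translation between the ordinal inequality and kernel containment via the linear ordering of submodules of $U_R$, the easy ``if'' direction by substitution, and the ``only if'' direction via the first isomorphism theorem applied to the surjection $f|^{V_\gamma}$. This is correct and is the intended (and essentially unique) elementary argument; your closing remark about why one must corestrict to the image $V_\gamma$ rather than target $V$ is also accurate, since a factorization through all of $V$ would require extending $g$ from $V_\gamma$ to $V$, which need not be possible.
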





\begin{Lemma} Let $f\colon U_R\to V_R$ be an $\Add(U_R)$-precover, where $U_R$ is artinian uniserial. Then $\kappa(f)=\min\{\,\kappa(g)\mid g\in\Hom(U_R,V_R)\,\}$.\end{Lemma}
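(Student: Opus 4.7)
The plan is to combine the factorization property of precovers with the monoid antihomomorphism property of $\kappa$ recorded in Lemma~\ref{3.1}. Since $U_R\in\Add(U_R)$, the defining property of the precover $f$ ensures that every $g\in\Hom(U_R,V_R)$ factors as $g=f\circ h$ for some $h\in S=\End(U_R)$. The strategy is simply to apply Lemma~\ref{3.1} to this composition and then exploit the inequality $\gamma\oplus\delta\geq\delta$ for the truncated ordinal sum.

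Explicitly, Lemma~\ref{3.1} applied with $h\colon U_R\to U_R$ in the role of the first-applied morphism and $f\colon U_R\to V_R$ in the role of the second yields
\[
\kappa(g)=\kappa(f\circ h)=\kappa(h)\oplus\kappa(f).
\]
Ordinal addition satisfies $\gamma+\delta\geq\delta$ for all ordinals $\gamma,\delta$, and $\kappa(f)\leq\alpha$, so
\[
\kappa(h)\oplus\kappa(f)=\min\{\kappa(h)+\kappa(f),\,\alpha\}\geq\min\{\kappa(f),\,\alpha\}=\kappa(f).
\]
Hence $\kappa(g)\geq\kappa(f)$ for every $g\in\Hom(U_R,V_R)$.

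Since $f$ itself lies in $\Hom(U_R,V_R)$, the value $\kappa(f)$ is attained, and therefore $\kappa(f)=\min\{\kappa(g)\mid g\in\Hom(U_R,V_R)\}$. There is no substantive obstacle here: the entire argument is a two-step consequence of, first, unpacking the precover property as the factorization $g=fh$ and, second, applying the antihomomorphism identity of Lemma~\ref{3.1}, so no additional ideas are required beyond those already set up in the paper.
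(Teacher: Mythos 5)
Your proof is correct and takes essentially the same route as the paper: unpack the precover property to obtain $g = f\circ h$ for some $h\in S$, apply Lemma~\ref{3.1} (in the correct order, since $h$ is applied first) to get $\kappa(g)=\kappa(h)\oplus\kappa(f)$, and conclude $\kappa(g)\geq\kappa(f)$. You supply slightly more justification than the paper (spelling out why $\kappa(h)\oplus\kappa(f)\geq\kappa(f)$ and noting that the minimum is attained at $g=f$), but there is no difference in method.
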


\begin{proof} If $f\colon U_R\to V_R$ is an $\Add(U_R)$-precover, then every morphism\linebreak $g\in\Hom(U_R,V_R)$ factors through $f$, i.e., $g=fs$ for some $s\in S$. By Lemma \ref{3.1} $\kappa(g) = 
\kappa(s)\oplus\kappa(f)$, so $\kappa(f) \leq \kappa(g)$.
\end{proof}

\begin{proposition}\label{3.5} Let $\kappa(S)$ be the submonoid of $\alpha+1$ associated to the endomorphism ring $S$, so that $\kappa(S)\cong\vartheta+1$ as an ordered set for some ordinal $\theta\le\alpha$. Then $U$ is not self-small if and only if $\kappa(S\setminus\{0\})$ is cofinal in $\alpha$ and cf$(\vartheta)=\aleph_0$.\end{proposition}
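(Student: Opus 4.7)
The plan is to recast the condition ``$U$ is not self-small'' as the existence of a summable family in $S$ with infinitely many non-zero terms, and then translate this via $\kappa$ into an assertion about $\kappa(S)\setminus\{\alpha\}$. Recall that a morphism $\varphi\colon U\to U^{(X)}$ is the same data as a summable family $\{f_x\}_{x\in X}$ in $S$ (set $f_x:=\pi_x\varphi$), and $\varphi(U)\subseteq U^{(F)}$ for a finite $F$ iff $f_x=0$ for every $x\notin F$. Hence $U$ is not self-small iff there is a summable family in $S$ with infinitely many $f_x\ne 0$. Moreover, since every non-zero $u\in U$ generates $U_\sigma$ for a unique non-limit $\sigma\le\alpha$, and $f_x(u)=0$ iff $\sigma\le\kappa(f_x)$, summability translates to: for every non-limit $\sigma\le\alpha$, the set $\{x\mid\kappa(f_x)<\sigma\}$ is finite.

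For the ``only if'' direction, first observe that if $\alpha$ is a successor then $U$ is cyclic, hence self-small, and $\sup(\kappa(S)\setminus\{\alpha\})<\alpha$, so both sides of the equivalence fail; thus assume $\alpha$ is a limit. Given a summable family with infinitely many non-zero terms, extract a countable subfamily $\{f_n\}_{n\in\N}\subseteq S\setminus\{0\}$. If $\tau:=\sup_n\kappa(f_n)<\alpha$, pick a non-limit $\sigma$ with $\tau<\sigma<\alpha$: then $\{n\mid\kappa(f_n)<\sigma\}=\N$, contradicting summability. So $\kappa(S\setminus\{0\})$ is cofinal in $\alpha$, and transporting the cofinal $\omega$-sequence $(\kappa(f_n))$ through the order-isomorphism $\kappa(S)\setminus\{\alpha\}\cong\vartheta$ yields a cofinal $\omega$-sequence in $\vartheta$, forcing $\vartheta$ to be a limit ordinal with cf$(\vartheta)=\aleph_0$.

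For the converse, cf$(\vartheta)=\aleph_0$ together with the cofinality of $\kappa(S\setminus\{0\})$ in $\alpha$ produces, via the order-iso, a strictly increasing sequence $\gamma_0<\gamma_1<\cdots$ in $\kappa(S)\setminus\{\alpha\}$ with $\sup_n\gamma_n=\alpha$. Picking $f_n\in S$ with $\kappa(f_n)=\gamma_n$ gives a family where, for every non-limit $\sigma<\alpha$, the sequence $\gamma_n$ is eventually $\ge\sigma$, so $\{n\mid\kappa(f_n)<\sigma\}$ is finite and $\{f_n\}_{n\in\N}$ is summable. Since every $f_n\ne 0$, the associated morphism $U\to U^{(\N)}$ witnesses that $U$ is not self-small.

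The main obstacle is the careful bookkeeping between limit and non-limit ordinals, since only non-limit $\sigma\le\alpha$ correspond to cyclic submodules and hence yield conditions on summability, and the forward argument needs a non-limit $\sigma$ strictly between $\tau$ and $\alpha$ (which is where one uses that $\alpha$ is a limit). Apart from this, both directions are a routine transfer of a cofinal $\omega$-sequence across the order-isomorphism $\kappa(S)\cong\vartheta+1$.
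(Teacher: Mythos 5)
Your proposal is correct and takes essentially the same approach as the paper: both directions reduce ``not self-small'' to the existence of a summable family in $S$ with infinitely many non-zero terms, translate this via the kernels $U_{\kappa(f_n)}$ into the existence of a cofinal $\omega$-sequence in $\kappa(S\setminus\{0\})$, and argue that a successor $\vartheta$ (equivalently, a greatest element of $\kappa(S\setminus\{0\})$) is incompatible with cofinality in the limit ordinal $\alpha$. Your version simply makes the translation through the monoid antihomomorphism $\kappa$ a bit more explicit than the paper does.
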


\begin{proof} Suppose $\kappa(S\setminus\{0\})$ cofinal in $\alpha$ and cf$(\vartheta)=\aleph_0$. Then $\bigcup_{0 \neq f\in S}\ker(f)=U_R$ and there exists a countable family of elements $f_i,\ i\in\N$, of $S\setminus\{0\}$ such that for every $f\in J(S)\setminus\{0\}$ there exists $i\in\N$ with $\ker(f)\subseteq\ker(f_i)$. Hence $\bigcup_{i\in \N}\ker(f_i)=U_R$. Therefore it is possible to extract from the sequence $f_i$ a subsequence $f_{i_n},\ n\in\N,$ with $\ker(f_{i_1})\subset \ker(f_{i_2})\subset \ker(f_{i_3})\subset \cdots.$ Then the morphism $(f_{i_n})_{n\in\N}\colon U\to U^{(\N)}$ shows that $U$ is not self-small.

Conversely, suppose that $U$ is not self-small. Then there exists a morphism $U\to U^{(I)}$ that shows that $U$ is not self-small, $I$ an infinite set. Without loss of generality, $I=\N$. Let $f=(f_n)_{n\in\N}\colon U\to U^{(\N)}$ be such a morphism, so that, for every $x\in U_R$, $f_n(x)=0$ for almost all $n\in\N$, and $f_n\ne 0$ for all $n\in\N$. Then $\{\,\ker(f_n)\mid n\in\N\,\}$ is cofinal in $U_R=\bigcup_{n\in\N}\ker(f_n)$. Therefore cf$(\vartheta)\le\aleph_0$ and $\kappa(S\setminus\{0\})$ is cofinal in $\alpha$. Suppose cf$(\vartheta)<\aleph_0$. Then cf$(\vartheta)=1$, i.e., $\vartheta $ is a non-limit ordinal, that  is, $\kappa(S\setminus\{0\})$ has a greatest element. Hence there exists $f\in S$, $f\ne0$, such that $\ker(f)\supseteq\ker(g)$ for every $g\in S$, $g\ne0$. If $x\in U\setminus\ker(f)$, then $U_R\supset\ker(f)\supseteq \bigcup_{n\in\N}\ker(f_n)$, a contradiction. This proves that cf$(\vartheta)=\aleph_0$.
\end{proof}

\begin{theorem}
Let $U$  be a uniserial artinian right $R$-module of length  
$\omega+1.$ Then $U$  is self-small if and only if $ S = \End(U)$  is a division ring.
\end{theorem}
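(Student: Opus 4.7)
The plan is to apply Proposition~\ref{3.5} to the pair $(\alpha,\kappa(S))$ with $\alpha=\omega$. Since $\alpha=\omega=\omega^1$ is additively indecomposable, property~(6) above tells us that $S$ is a domain and every non-zero endomorphism of $U$ is surjective. In particular $\ker(f)=U_\gamma$ with $\gamma<\omega$ (i.e.\ $\gamma$ a non-negative integer) for every $f\in S\setminus\{0\}$, while $\kappa(0)=\omega$. Thus $\kappa(S)\setminus\{\omega\}$ is a subset of $\mathbb{N}$ which, because $\gamma\oplus\gamma'=\gamma+\gamma'$ whenever $\gamma,\gamma'\in\mathbb{N}$, is actually an additive submonoid of $\mathbb{N}$ via the Lemma~\ref{3.1} antihomomorphism.

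Assume first that $S$ is a division ring. Then every non-zero $f\in S$ is invertible, hence injective, so $\ker(f)=0=U_0$ for every non-zero $f$. Therefore $\kappa(S)=\{0,\omega\}$ and in particular $\kappa(S\setminus\{0\})=\{0\}$ is not cofinal in $\alpha=\omega$. By Proposition~\ref{3.5}, $U$ is self-small.

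Conversely, assume $S$ is not a division ring. Since $S$ is a domain (by~(6)), there exists $f\in S$ which is non-zero but not invertible; being a non-invertible surjective endomorphism, $f$ must fail to be injective, so $\ker(f)=U_\gamma$ for some integer $\gamma\ge 1$. By Lemma~\ref{3.1}, $\kappa(f^n)=\underbrace{\gamma\oplus\cdots\oplus\gamma}_{n}=\gamma n<\omega$ for every $n\in\mathbb{N}$, so all powers $f^n$ are non-zero. Hence the set $\{\gamma n\mid n\in\mathbb{N}\}\subseteq\kappa(S\setminus\{0\})$ is cofinal in $\omega=\alpha$. Moreover $\kappa(S)\setminus\{\omega\}$ is an infinite subset of $\mathbb{N}$, so its order type is $\omega$; writing $\kappa(S)\cong\vartheta+1$ as in Proposition~\ref{3.5}, we get $\vartheta=\omega$ and $\operatorname{cf}(\vartheta)=\aleph_0$. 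Both hypotheses of Proposition~\ref{3.5} are satisfied, so $U$ is not self-small.

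The only real subtlety is justifying the cofinality condition in the non-division-ring direction: one needs $\kappa(S)\setminus\{\omega\}$ to be an \emph{infinite} subset of $\mathbb{N}$, and the iteration $f\mapsto f^n$ (which works precisely because $\alpha=\omega$ is additively indecomposable, so no power of $f$ collapses to zero until the kernel would exceed $\omega$) is what produces the cofinal sequence. Everything else is a direct bookkeeping check against Proposition~\ref{3.5}.
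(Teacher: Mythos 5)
Your proof is correct and follows essentially the same route as the paper: both hinge on Proposition~\ref{3.5} together with the observation that $\kappa(S\setminus\{0\})$ sits inside the additive monoid $\mathbb{N}$ when $\alpha=\omega$, and that iterating a non-monic endomorphism produces a cofinal sequence of kernel indices. The only (cosmetic) difference is in the easy direction: where you invoke Proposition~\ref{3.5} again to conclude self-smallness from $\kappa(S\setminus\{0\})=\{0\}$, the paper gives a two-line direct verification that a division-ring endomorphism ring forces any map $U\to U^{(I)}$ to have almost all components zero; both are valid, and yours is marginally more uniform while the paper's is marginally more elementary.
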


\begin{proof} Let $U$  be a uniserial artinian self-small right $R$-module of length 
$\omega+1$. We can apply Proposition~\ref{3.5} to the module $U_R$. In this specific case, we have that $\alpha=\omega$, so that $\kappa(S\setminus\{0\})$ is a submonoid of the additive monoid $\N$. All non-zero additive submonoids of $\N$ are infinite. Thus we have that either $\kappa(S\setminus\{0\})=0$ or $\kappa(S\setminus\{0\})$ is cofinal in $\N$. If $\kappa(S\setminus\{0\})=0$, then all non-zero endomorphisms in $S$ are monic, hence isomorphisms, so $S$ is a division ring in this case.
If $\kappa(S\setminus\{0\})$ is cofinal in $\N$, then $\vartheta=\omega$ in the notation of Proposition~\ref{3.5}, hence cf$(\vartheta)=\aleph_0$, so $U$ is not self-small by Proposition~\ref{3.5}, a contradiction. This proves that $ S = \End(U)$  is a division ring.

Conversely, suppose $S$ a division ring. Fix any morphism $(f_i)_{i\in I}\colon U\to U^{(I)}$. For any non-zero $x\in U$, we have that $f_i(x)=0$ for almost all $i$. But $x\ne0$ and $f_i(x)=0$ implies $f_i=0$, because all non-zero elements of $S$ are automorphisms of $U$. Hence $f_i=0$ for almost all $i\in I$. This proves that $U$ is self-small. \end{proof}

\section{The case $ K\subset I $}\label{5}

Throughout this section, {\em 
$U_R$ is a uniserial $R$-module and $S := {\rm End}(U_R)$. 
We assume that $K\subset I \subset S$, i.e., every monomorphism in $S$ is onto and 
there exists an epimorphism in $S$ which is not monic.  
Moreover, we assume that the set $\{\,\ker f \mid f \in J(S)\, \}$ has a least element. 
This least kernel will be denoted by $V$. Observe that if 
$f \in S$, then $f(V)=0$ is equivalent to $f \in J(S)$.}
Note that every uniserial artinian module $U$ with $U_e \neq 0$ satisfies our assumptions.

\begin{lemma} \label{Vproperties} In the notation introduced above, the following statements hold.

\begin{enumerate}
  \item[{\rm (1)}] There exists an epimorphism $\varphi \in S$ such that 
$V = \ker \varphi$. In particular, $J(S) = S \varphi$.
  \item[{\rm (2)}] $V$ is a fully invariant submodule of $U$
  \item[{\rm (3)}] Let $\varphi \in S$ be an epimorphism with kernel $V$ and $V_i := \ker \varphi^i$.
            Then $V_i$ is a fully invariant submodule of $U$ for every $i \in \mathbb{N}$.
	\item[{\rm (4)}] If $f \in S$ satisfies $0 \neq \ker f \varsubsetneq V_{\omega}:= \sum_{i \in \N} 
	          V_i$, then $\ker f = V_i$ for some $i \in \mathbb{N}$.
	\item[{\rm (5)}]The family $(U_i)_{i \geq 1}$, where $U_i = U, i \in \mathbb{N}$,  
            is locally $T$-nilpotent if and only $V_{\omega} = U$.
\end{enumerate}
\end{lemma}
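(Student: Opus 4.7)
The overall strategy is to use part~(1) to obtain a distinguished surjective generator $\varphi$ of $J(S)$, after which the ascending filtration $V_i = \ker\varphi^i$ is controlled by the powers $J(S)^i = S\varphi^i$. The remaining parts then reduce to manipulations involving $\varphi$ and the ideal $S\varphi$.

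For part~(1), begin with some $\pi\in J(S)$ with $\ker\pi = V$, furnished by the hypothesis, and a surjective $\rho\in J(S)\setminus K$ (which exists since $I\setminus K\neq\emptyset$), so that $\ker\rho\supseteq V$. If $\ker\rho = V$ we are done with $\varphi = \rho$. Otherwise, using uniseriality, one pins down the kernel of $\pi+\rho$: on $\ker\rho\setminus V$ the sum $(\pi+\rho)$ equals $\pi$ and is nonzero, so $\ker(\pi+\rho)\cap\ker\rho = V$, and uniseriality forces $\ker(\pi+\rho) = V$. The main obstacle is to check that $\pi+\rho$ is surjective, which should follow from $\rho$ being surjective and $\pi$ having its kernel strictly inside $\ker\rho$; this is the delicate step of the plan. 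Once a surjective $\varphi$ with $\ker\varphi = V$ is in hand, the identity $J(S) = S\varphi$ is immediate: every $f\in J(S)$ has $\ker f\supseteq V=\ker\varphi$, so surjectivity of $\varphi$ yields $f = g\varphi$ for some $g\in S$, while $S\varphi\subseteq J(S)$ is obvious.

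For part~(2), split according to whether $f\in J(S)$. If yes, then $f(V) = 0\subseteq V$ by the observation in the setup. If $f$ is a unit (so $f\notin J(S)$ by locality of $S$), then for every $g\in J(S)$ the product $gf$ lies in $J(S)$ (two-sided), so $(gf)(V)=0$, i.e.\ $g(f(V)) = 0$, and hence $f(V)\subseteq \bigcap_{g\in J(S)}\ker g = V$. For part~(3), the essential remark is that $J(S) = S\varphi$ being two-sided forces $\varphi S\subseteq S\varphi$, and by induction $J(S)^i = S\varphi^i$. Then $V_i = \ker\varphi^i$ coincides with $\{u\in U\mid J(S)^i u = 0\}$: $\varphi^i u=0$ implies $S\varphi^i u=0$, and conversely taking $g=\varphi^i\in J(S)^i$ recovers $\varphi^i u=0$. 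Full invariance of $V_i$ then follows: for $f\in S$ and $u\in V_i$, $gf\in J(S)^i S\subseteq J(S)^i$ for every $g\in J(S)^i$, so $g(f(u)) = (gf)(u) = 0$, whence $f(u)\in V_i$.

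For part~(4), take $f\in S$ with $0\neq\ker f\subsetneq V_\omega$. Since $\ker f\neq 0$, $f\in J(S) = S\varphi$, so $f = s_1\varphi$. Iterate: either $s_j$ is a unit, in which case $\ker f = \varphi^{-j}(\ker s_j) = V_j$ and we stop; or $s_j\in J(S) = S\varphi$, so $s_j = s_{j+1}\varphi$ and $f = s_{j+1}\varphi^{j+1}$. If the iteration never terminates, then $\ker f \supseteq \ker\varphi^j = V_j$ for every $j$, hence $\ker f\supseteq V_\omega$, contradicting $\ker f\subsetneq V_\omega$. For part~(5), in the forward direction every non-isomorphism $f_j$ lies in $S\varphi$ and, using $\varphi S\subseteq S\varphi$ repeatedly, any composition $f_n\cdots f_1$ rewrites as $t\varphi^n$ for some $t\in S$; if $V_\omega = U$, then each $u$ lies in some $V_n$, so $\varphi^n(u)=0$ and therefore $f_n\cdots f_1(u) = 0$. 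Conversely, applying local $T$-nilpotence to the constant sequence $f_j = \varphi$ shows $\varphi^n(u) = 0$ eventually for every $u\in U$, so $U\subseteq V_\omega$.
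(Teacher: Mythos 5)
Your overall strategy coincides with the paper's: produce a surjective $\varphi$ generating $J(S)$, exploit $\varphi S\subseteq S\varphi$ (equivalently $J(S)^i = S\varphi^i$), and deduce (2)--(5) from there. Parts (2)--(5) are correct and essentially identical to the paper's argument.

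The only real gap is the step you yourself flagged in (1): you do not prove that $\pi+\rho$ is surjective, and the reason you gesture at---that surjectivity ``should follow from $\rho$ being surjective and $\pi$ having its kernel strictly inside $\ker\rho$''---is false as a general implication. Indeed, if some $\psi\in K$ has $\ker\psi=V$, then $\pi:=\psi-\rho$ also has $\ker\pi = V \varsubsetneq \ker\rho$ (by exactly your uniseriality computation), yet $\pi+\rho=\psi$ is not surjective. The correct fix is a further case split, which the paper makes explicitly: if $\pi$ is already surjective, simply take $\varphi=\pi$ and stop. Otherwise $\pi\in K$; since $\rho\notin K$ and $K$ is an additive subgroup of $S$ (it is a two-sided ideal), one gets $\pi+\rho\notin K$, i.e.\ $\pi+\rho$ is surjective. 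Combined with your verification that $\ker(\pi+\rho)=V$, this yields the desired $\varphi$, and the rest of your proof then goes through unchanged.
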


\begin{proof}
(1) Clearly, there exist $f \in S$ such that $\ker f = V$ and 
 an epimorphism $g \in J(S)$. Observe that $\ker g$
contains $V$. If $f$ is onto, 
then $\varphi$  can be $f$. If $\ker g = V$, $\varphi$ can be $g$. 
If $f$ is not onto and $\ker g \neq V$, set $\varphi := f + g$.
The last statement in (1) follows from the homomorphism theorem. 
\smallskip

(2) Let $f\in S$ be a morphism with kernel $V$. Assume there exist $g \in S$ and 
$v \in V$ such that $g(v) \not \in V$. Then $fg \in J(S)$ but $fg(v) \neq 0$. So 
$fg$ is an element of $J(S)$ having its kernel strictly contained in $V$. This is not 
possible. 
\smallskip

(3) Let $g \in S$ be such that $g(v) \not \in V_i$ for some $v \in V_i$. That is, 
$\varphi^ig(v) \neq 0$. Note that $\varphi g \in J(S)$ and, by (1), $J(S) = S \varphi$.
Applying this rule $i$ times, we get that $\varphi^i g = g'\varphi^i$ for some $g'\in S$.
Evaluating both sides on $v$ we get a contradiction $0 \neq \varphi^i g (v) = g' \varphi^i(v) = 0$.
\smallskip

(4) As before, let $f$ be a non-zero element of $J(S)$ and  $\varphi\in S$ be an 
epimorphism with kernel $V$. By (1), there exists $f_1 \in S$ such that $f = f_1 \varphi$.
If $f_1$ is not monic, we can write $f_1 = f_2\varphi$ for some $f_2 \in S$. Hence 
$f = f_2\varphi^2 $. Repeating this process, we find $f_1,f_2,\dots$ such that 
$f = f_k \varphi^k$ for each $k$. Since the kernel of $f$ is strictly contained in 
$V_{\omega}$, this process has to stop. That is, for some $t \in \mathbb{N}$ we have 
$f = f_t \varphi^t$, where $f_t$ is a monomorphism. Then, obviously, $\ker f = 
V_t$.
\smallskip

(5) If $V_{\omega} \neq U$, consider the sequence 
$$U \stackrel{\varphi}{\to} U \stackrel{\varphi}{\to} U \stackrel{\varphi}{\to} \cdots,$$
where $\varphi$ is an epimorphism with kernel $V$. If $V_{\omega} \neq U$, this sequence shows that the family considered is not locally $T$-nilpotent.

Conversely, fix a sequence of non-isomorphisms 
$$U\stackrel{f_1}{\to} U\stackrel{f_2}{\to} U\stackrel{f_3}{\to} \cdots $$ 

By (1), we can write $f_i = g_i \varphi$ for some $g_i \in S$. Then 
for every $n \in \mathbb{N}$ there exists $h_n$ such that 
$f_n \cdots f_2f_1 = h_n \varphi^n$. So $f_n\cdots f_1 (V_n) = 0$ for every 
$n$. Since $U = \bigcup_{n  \geq 1} V_n$, the family of modules considered  is 
locally $T$-nilpotent. 

\end{proof}

\begin{remark} {\rm 
Notice that the modules $V_i,\ i \in \N,$ and $V_{\omega}$ were defined via the 
endomorphism $\varphi \in S$. But there exists another description of these submodules 
which is independent of $\varphi$. Namely, $V_i = \{\,u \in U \mid  f(u) = 0$  for every $ f \in J^{i}\,\}$. 

The module $V_{\omega}$ is a fully invariant submodule of $U$ contained in $U_e$. In general, 
$V_{\omega} \neq U_e$. As an example, consider the artinian module of length $\omega^2+1$ having 
all its non-zero factors isomorphic presented in \cite {Facsal}. }
\end{remark}

\smallskip

\begin{lemma} \label{independence}
Assume that $A$ is an $R$-module such that the $S$-module $\Hom(U_R,A_R)$ 
has a minimal weakly generating set $X$.  
Then $X$ satisfies the following independence property: If $\{\,s_x\mid x \in X\,\}$ is a
summable family of $S$ such that $\sum_{x \in X} xs_x = 0$. Then 
$s_x(V_{\omega}) = 0$ for every $x \in X$.
\end{lemma}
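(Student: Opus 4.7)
The plan is to argue by contradiction: assume $\sum_{x\in X} xs_x = 0$ for a summable family $\{s_x\mid x\in X\}\subseteq S$, yet $s_{x_0}(V_\omega)\neq 0$ for some $x_0\in X$. I will deduce from this that a cleverly chosen $x^*\in X$ is itself weakly generated by $X\setminus\{x^*\}$, contradicting the minimality of $X$.

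First I would collect structural information about the relevant kernels. Set $T := \{\,x\in X\mid s_x(V_\omega)\neq 0\,\}$, nonempty by assumption, and observe that for $x\in T$ we have $\ker s_x\varsubsetneq V_\omega$. Lemma \ref{Vproperties}(4) then forces $\ker s_x$ to be either $0$ (so $s_x$ is monic, hence a unit, by the hypothesis $K\subset I$) or $V_{j_x}$ for some $j_x\geq 1$; in either case one can write $s_x = t_x\varphi^{j_x}$ for a unit $t_x\in S$, with the convention $j_x:=0$ and $\varphi^0:=1_U$ when $s_x$ is a unit. Choose $x^*\in T$ minimizing $j_{x^*}$, set $j:=j_{x^*}$, and write $s_{x^*}=t\varphi^j$ with $t$ a unit.

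The key step is to argue that every $s_x$ with $x\neq x^*$ kills $V_j$: if $x\in T$, then $j_x\geq j$, giving $V_j\subseteq V_{j_x}=\ker s_x$; if $x\notin T$, then $V_j\subseteq V_\omega\subseteq\ker s_x$. Consequently each such $s_x$ factors uniquely as $s_x = s'_x\varphi^j$, since $\varphi^j\colon U\to U$ is an epimorphism with kernel $V_j$. Substituting into the relation yields
\[
\Bigl(x^* t + \sum_{x\neq x^*} x s'_x\Bigr)\varphi^j = 0,
\]
and the surjectivity of $\varphi^j$ lets us cancel it, giving $x^* t = -\sum_{x\neq x^*}xs'_x$ and hence $x^* = -\sum_{x\neq x^*}x(s'_xt^{-1})$. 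Summability of $\{\,s'_xt^{-1}\,\}_{x\neq x^*}$ is routine: for $u\in U$, pick $u''\in U$ with $\varphi^j(u'') = t^{-1}(u)$, then $s'_xt^{-1}(u) = s_x(u'')$ vanishes for all but finitely many $x$ by summability of $\{s_x\}$. This is the desired contradiction to the minimality of $X$.

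The main obstacle I foresee is that one cannot in general invert the coefficient $s_{x_0}$ to isolate $x_0$ from the relation, since $s_{x_0}$ typically has a nonzero kernel. The trick is to choose $x^*\in T$ so that the corresponding index $j_{x^*}$ is \emph{minimal}: this guarantees that $V_{j_{x^*}}$ is contained in every kernel $\ker s_x$ for $x\neq x^*$, which in turn allows the factor $\varphi^{j_{x^*}}$ to be cancelled uniformly from every summand. Lemma \ref{Vproperties}(4) (forcing the relevant kernels to be among the $V_i$) is exactly what makes this uniform choice possible.
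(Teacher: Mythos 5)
Your proof is correct and essentially identical to the paper's: both select the $x\in X$ whose coefficient $s_x$ has smallest kernel (which Lemma \ref{Vproperties}(4) forces to be $0$ or $V_j$), factor every coefficient through the epimorphism $\varphi^j$, cancel $\varphi^j$ by surjectivity (checking summability exactly as you do), and contradict minimality of $X$ because the surviving coefficient of the chosen term is monic and hence a unit by the standing hypothesis $K\subset I$. The only cosmetic difference is that you absorb the $\ker=0$ case into the convention $j=0$, whereas the paper disposes of it up front by observing that a unit coefficient already violates minimality.
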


\begin{proof} Let $\varphi \in S$ be an epimorphism with kernel $V$.
Assume that $\sum_{x \in X} xs_x = 0$ and $s_x(V_{\omega}) \neq 0$ for some 
$x \in X$. Let $x'\in X$ be such that $\ker s_{x'} = {\rm min} \{\ker s_x \mid x \in X\}$. 
Note that $0 \neq \ker s_{x'}$ (since $X$ is minimal) and $\ker s_{x'} = V_i$ 
for some $i \in \mathbb{N}$ by Lemma \ref{Vproperties}(4). Then for every $x \in X$ there exists $t_x \in S$ such that 
$s_x = t_x\varphi^i$. The family $\{\,t_x\mid x \in X\,\}$ is also summable and 
$0 = (\sum_{x \in X} xt_x)\varphi^{i}$. But $\varphi$ is onto, so
$\sum_{x \in X} xt_x =0$. As $t_{x'}$ is a monomorphism, we get a contradiction
to the minimality of $X$.
\end{proof}

Now, in the notation of Lemma~\ref{independence}, we want to define a map $\pi_x \colon \Hom(U_R,A_R)\to T:=\Hom_R(V_{\omega},V_{\omega})$ for each $x \in X$. Note that $T$ has a canonical right $S$-module structure, because $V_{\omega}$ is a fully invariant 
submodule of $U$: for every $t \in T$ and $s \in S$ define $ts:= ts|_{V_{\omega}}$.
 
If $f \in \Hom(U_R,A_R)$, express $f$ as $f = \sum_{x \in X} xs_x$, where $\{\,s_x\mid x \in X\,\}$ 
is a summable family of $S$. Now set $\pi_x(f) := s_x|_{V_{\omega}}$. 
By Lemma \ref{independence}, $\pi_x$ is well defined, and  it is easy to verify that $\pi_x$ is a 
homomorphism of $S$-modules. Note that, for every $f \in \Hom_R(U,A)$, the family 
$\{\,\pi_x(f)\mid x \in X\,\}$ is summable.  If $V_{\omega} \neq U$, almost all the morphisms in 
this family are zero, because $s_x|_{V_{\omega}} = 0$ whenever $s_x(u) = 0$ for 
some $u \in U \setminus V_{\omega}$. 

\medskip

The following result is similar to results by S.~U.~Chase in \cite{Chase}. We keep the notations of all this section. In particular, $T:=\Hom_R(V_{\omega},V_{\omega})$.

\begin{theorem} \label{products} 
Set $X_0 := \{\,x \in X \mid \pi_x \neq 0\,\}$. Assume $U \neq V_{\omega}$. 
Then $\Hom_R(U,A)$ is not isomorphic to any module of the form $\prod_{i \in I} N_i$, where 
\begin{enumerate}
\item[(a)] $I$ is of infinite  cardinality $|I|\ge |T|$.
\item[(b)] $N_j\varphi^n \neq N_j\varphi^{n+1}$ for every $n \in \mathbb{N}_0$, every $j \in I$ and every epimorphism 
$\varphi \in S$ with kernel $V$.
\end{enumerate}
\end{theorem}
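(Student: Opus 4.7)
The plan is a proof by contradiction: suppose $M := \Hom_R(U,A) \cong \prod_{i \in I} N_i$ satisfies (a) and (b), and exploit the map $\Pi \colon M \to T^{(X_0)}$ already introduced.

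\emph{Reduction.} By Lemma~\ref{independence} and the discussion following it, $\ker \Pi = \bigcap_n M\varphi^n$. Under the product isomorphism, $M\varphi^n = \prod_i N_i\varphi^n$ coordinatewise, so $\ker\Pi = \prod_i \bigcap_n N_i\varphi^n$. The quotients $N_i/\bigcap_n N_i\varphi^n$ still satisfy~(b), because $(N_i/D_i)\varphi^n/(N_i/D_i)\varphi^{n+1} \cong N_i\varphi^n/N_i\varphi^{n+1} \ne 0$; replacing each $N_i$ by its quotient, I may assume $\bigcap_n N_i\varphi^n = 0$, making $\Pi$ injective. The hypothesis $U \ne V_\omega$ then ensures, as in the paragraph preceding the theorem, that $\Pi$ maps $M$ into the direct sum $T^{(X_0)}$.

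\emph{Finding a basis of $M/M\varphi$.} Set $D := S/J(S)$; this is a division ring since $S$ is local with maximal ideal $J(S) = I$. I would then prove that $\{\bar x \mid x \in X_0\}$ is a $D$-basis of $M/M\varphi$. For spanning, take $m = \sum_x x s_x$ and fix $u_0 \in U \setminus V_\omega$: the set $F := \{x : s_x(u_0) \ne 0\}$ is finite by summability, and for $x \notin F$ uniseriality of $U$ forces $\ker s_x \supsetneq V_\omega$, so $s_x \in J(S) = S\varphi$ by Lemma~\ref{Vproperties}(1); writing $s_x = t_x\varphi$ (and noting $(t_x)$ is summable since $\varphi$ is onto) gives $m \equiv \sum_{x\in F} x s_x \pmod{M\varphi}$, a finite $D$-combination of the $\bar x$. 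For independence, a relation $\sum_{x \in F} x d_x \in M\varphi$ with $d_x \notin J(S)$ yields, after writing the element of $M\varphi$ as $\sum_y y(t_y\varphi)$, a relation $\sum_y y c_y = 0$ in $M$; Lemma~\ref{independence} then forces each $c_y$ to annihilate $V_\omega$, and for $y \in F$ this translates into $d_y|_{V_\omega} \in T\varphi$, hence $d_y(V)=0$, i.e.\ $d_y \in J(S)$, a contradiction. Thus $\dim_D M/M\varphi = |X_0|$.

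\emph{The contradiction --- main obstacle.} On the product side, each $N_i/N_i\varphi$ is a nonzero $D$-vector space, so $D^I \hookrightarrow \prod_i(N_i/N_i\varphi) \cong M/M\varphi$, and Erd\H{o}s--Kaplansky gives $|X_0| = \dim_D M/M\varphi \ge \dim_D D^I = |D|^{|I|}$. The embedding $\Pi$ also respects the $\varphi$-filtration: $\Pi^{-1}((T\varphi^n)^{(X_0)}) = M\varphi^n$ for every $n$, so $M/M\varphi^n$ embeds in $(T/T\varphi^n)^{(X_0)}$. The main obstacle is to combine these structural facts with hypothesis~(a) to produce the final contradiction, since a naive cardinality comparison is inconclusive ($|X_0|$ may be much larger than $|I|$). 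My anticipated route is to pick, for each $i \in I$ and each $n \in \N_0$, an element $n_i^{(n)} \in N_i\varphi^n \setminus N_i\varphi^{n+1}$ supplied by~(b), and to track the residues $\bigl(\pi_x(e_i(n_i^{(n)})) + T\varphi^{n+1}\bigr)_{x \in X_0} \in (T/T\varphi^{n+1})^{(X_0)}$. Using the independence part of the previous paragraph to cut the genuine range of these residues down to a set of size at most~$|T|$, the hypothesis $|I| \ge |T|$ should force two indices $i \ne i'$ (for some~$n$) whose residues coincide, yielding $e_i(n_i^{(n)}) - e_{i'}(n_{i'}^{(n)}) \in M\varphi^{n+1}$. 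But the $i$-th coordinate of this difference is $n_i^{(n)} \notin N_i\varphi^{n+1}$, contradicting $M\varphi^{n+1} = \prod_j N_j\varphi^{n+1}$.
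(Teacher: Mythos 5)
Your write-up correctly sets up the auxiliary map $\Pi = (\pi_x)_{x \in X_0}$, and the reduction $\ker\Pi = \bigcap_n M\varphi^n$ together with the claim that $\{\bar x \mid x \in X_0\}$ is a $D$-basis of $M/M\varphi$ are both correct (and nice structural observations that the paper does not make explicit). The problem is that the argument never closes: you acknowledge this yourself, and the ``anticipated route'' you sketch for the contradiction does not work. The residues $\bigl(\pi_x(e_i(n_i^{(n)})) + T\varphi^{n+1}\bigr)_{x\in X_0}$ live in the \emph{direct sum} $(T/T\varphi^{n+1})^{(X_0)}$, whose cardinality is roughly $|X_0|\cdot|T|$, and you have just shown $|X_0| \ge |D|^{|I|}$, which is enormously larger than $|T|$ and $|I|$. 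Different indices $i$ can perfectly well produce residues with pairwise disjoint supports in $X_0$, so nothing forces two of them to coincide; ``cutting the genuine range down to a set of size at most $|T|$'' is precisely the missing idea, and no mechanism for doing so is supplied. Moreover, even granting such a collision, your hypothesis (b) only forbids $n_i^{(n)} \in N_i\varphi^{n+1}$ \emph{for the $i$ you chose}; you would still need to rule out the collision happening ``inside one coordinate,'' which your sketch does not address.

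The paper's argument is genuinely different in structure and avoids exactly this difficulty. Rather than trying to compare the images of basis vectors $e_i$, it fixes a decreasing chain $I = I_0 \supset I_1 \supset \cdots$ with all $|I_n| = |I|$ and $\bigcap_n I_n = \emptyset$, and builds a single element $m = \sum_n m_n$ step by step. At step $n$ it works inside the finite set $F_0 \cup \cdots \cup F_{n-1}$ (the accumulated support of the previously chosen $\pi_x$-images), and uses the Chase-style counting estimate there: $(T/T\varphi^{n+1})$ restricted to a \emph{finite} index set has at most $|T|$ elements, while $M_n\varphi^n/M_n\varphi^{n+1}$ has at least $2^{|I|} \ge 2^{|T|}$ elements, so one can choose $m_n \in M_n\varphi^n \setminus M_n\varphi^{n+1}$ whose $\pi_x$-image vanishes on the accumulated finite set. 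This forces a fresh index $i_n$ at each step with $\ker\pi_{i_n}(m_n) = V_n$, and the series $m = \sum_n m_n$ (which converges because $\bigcap I_n = \emptyset$) then has $\ker s_{i_n} = V_n$ for infinitely many distinct $i_n$, which contradicts summability of $(s_x)_x$ at any $u \in U \setminus V_\omega$. In short, the paper's pigeonhole is applied \emph{per step against a finite set of coordinates}, not globally against the index set $I$, and the final contradiction is against summability rather than against the product filtration. You would need to restructure your argument along these lines; the linear-algebra framework you set up, while correct, does not by itself produce the contradiction.
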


\begin{proof}
Assume $\Hom_R(U,A) = M = \prod_{i \in I} N_i$.
We start with a sequence $I = I_0 \supset I_1 \supset I_2 \supset \dots$
such that all  the subsets $I_n$ have cardinality $|I|$ and $\bigcap_{n \in \mathbb{N}_0} 
I_n = \emptyset$. Set $M_n:= \prod_{j \in I_n} N_j$. We will view $M_n$ canonically 
as a submodule  of $M$.

Now consider the following construction.
Start with an element $m_0 \in M \setminus M \varphi$. Note that there 
exists an index $i_0 \in X_0$ such that $\pi_{i_0} (m_0)$ is a monomorphism. 
On the other hand, there exists a finite set $F_0 \subseteq X_0$ such that 
$\pi_x (m_0) = 0$  every $x \in X_{0} \setminus F_0$.

Using the cardinality argument by Chase (namely, there are two elements of $M_1\varphi$ 
having different classes in the group $M_1\varphi /M_1 \varphi^2$ and the same image 
under $\pi_x$ for every $x \in F_0$, because $F_0$ is finite and $M_1\varphi/M_1\varphi^2$ has 
at least $2^{|T|}$ elements), we get that
 there exists $m_1 \in M_1 \varphi \setminus M_1\varphi^2$
such that $\pi_x(m_1) = 0$ for every $x \in F_0$. Since $m_1 \in M\varphi$ and
$m_1 \not \in M \varphi^2$, there exists $i_1 \in X_0$ such that $\pi_{i_1}(m_1)$ has 
kernel $V_1$. Of course $i_1 \not \in F_0$, in particular $i_1 \neq i_0$. 
Also, there exists a finite set $F_1 \subseteq X_0$ such that $\pi_x(m_1) = 0$ 
for every $x \in X \setminus F_1$.

It is now clear how to continue. At the $n$-th step, define $m_n \in M_n\varphi^n \setminus 
M_n \varphi^{n+1}$ such that $\pi_x(m_n) = 0$ for every $x \in F_0 \cup F_1 \cup \dots \cup F_{n-1}.$
Then there exists $i_n \in X_0$ such that $\pi_{i_n}(m_n)$ has kernel $V_n$. Note that  $i_n\not \in 
\{i_0,\dots,i_{n-1}\}$. Moreover, there exists a finite set $F_{n} \subseteq X_0$ such 
that $\pi_x(m_n) = 0$ for every $x \in X_0 \setminus F_n$. 

In the end we get a sequence of elements $m_0,m_1,m_2,\dots$, where $m_n \in M_n$. In $M = \prod_{i\in I} N_i$
we can define $m := \sum_{n\in \mathbb{N}} m_n$ because $\bigcap_{n \in \mathbb{N}_0} I_n = \emptyset $. We claim that, for every $n \in \mathbb{N}$,
$\pi_{i_n}(m)$ has kernel $V_n$. Consider $u_n := \sum_{i>n} m_i$ and note that 
$u_n \in M \varphi^{n+1}$. Then $m = m_1+m_2+\cdots+ m_{n-1} + m_n + u_n$. Since 
$i_n \not \in F_0\cup \cdots \cup F_{n-1}$, we have $\pi_{i_n}(m_1+ \cdots +m_{n-1}) = 0$.
By our construction, $\pi_{i_n}(m_n)$ has kernel $V_n$ and $\pi_{i_n}(u_n)$ is in 
$T\varphi^{n+1}$, so its kernel strictly contains $V_{n}$. Hence the morphism $\pi_{i_n}(m)$ has 
kernel $V_n$.

Now it is easy to conclude. Write $m = \sum_{x \in X} xs_x$, where 
$\{\,s_x\mid x \in X\,\}$ is a summable family in $S$. Notice that $s_x|_{V_\omega} = \pi_x(m)$.
We know that $s_{i_n}$ has kernel $V_{n}$. So, if $u \in U \setminus V_{\omega}$, we get that 
$s_{i_n}(u) \neq 0$ for every $n \in \mathbb{N}$. This contradicts the 
summability.
\end{proof}

\begin{theorem}\label{lastcases}
Under the hypotheses in this section on $U$, one has that  $\Add(U)$ is covering if and only if it is closed under direct limit. 
\end{theorem}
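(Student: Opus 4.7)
\emph{Plan.} The ``if'' direction is immediate, since $\Add(U)$ is always precovering and any precovering class closed under direct limits is covering. For the converse, assume $\Add(U)$ is covering. By Lemma \ref{Vproperties}(5) combined with Lemma \ref{localy}(3), it is enough to show $V_{\omega}=U$: in that case the family $(U_i)_{i\ge 1}$ with $U_i=U$ is locally $T$-nilpotent, hence $\Add(U)$ is closed under direct limits.

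Suppose for a contradiction that $V_{\omega}\subsetneq U$. The hypothesis $K\subset I$ makes $U$ a uniserial module of type $1$, hence $\Add(U)$ is trivial and every $\Add(U)$-cover has the form $f\colon U^{(X)}\to A$. Pick an infinite set $\Lambda$ with $|\Lambda|\ge |T|$, where $T=\Hom(V_{\omega},V_{\omega})$, and set $A:=U^{\Lambda}$. Since $\Add(U)$ is covering, $A$ has a cover of the above form, and Theorem \ref{mgset}(3) yields a minimal weakly generating set of the right $S$-module $\Hom(U,A)\cong S^{\Lambda}$.

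Now identify $S^{\Lambda}=\prod_{\lambda\in\Lambda}N_{\lambda}$ with $N_{\lambda}:=S$. Condition (a) of Theorem \ref{products} is satisfied by the choice of $\Lambda$. For condition (b), fix any epimorphism $\varphi\in S$ with $\ker\varphi=V$: since $\varphi$ is not injective it has no left inverse in $S$, so an equation $\varphi^{n}=s\varphi^{n+1}$ would force $s\varphi=1$ (evaluate, using that $\varphi^{n}$ is surjective), a contradiction. Hence $\varphi^{n}\notin S\varphi^{n+1}$ for every $n\in\mathbb{N}_0$, and therefore $N_{\lambda}\varphi^{n}=S\varphi^{n}\ne S\varphi^{n+1}=N_{\lambda}\varphi^{n+1}$. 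Theorem \ref{products} now forbids $\Hom(U,A)$ from having this product shape, contradicting the tautological isomorphism $\Hom(U,U^{\Lambda})=S^{\Lambda}$. Hence $V_{\omega}=U$, and $\Add(U)$ is closed under direct limits.

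The only delicate point is the choice of the test module $A$: one wants $\Hom(U,A)$ to simultaneously carry a minimal weakly generating set (which covering plus triviality of $\Add(U)$ guarantees via Theorem \ref{mgset}(3)) and to split as a product satisfying conditions (a) and (b) of Theorem \ref{products}. Taking $A=U^{\Lambda}$ with $|\Lambda|\ge |T|$ achieves both, and the failure of left invertibility of $\varphi$---which is exactly what the strict inclusion $K\subset I$ provides---delivers (b) essentially for free.
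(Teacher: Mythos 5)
Your proof is correct and follows essentially the same route as the paper's: reduce (via Lemma~\ref{Vproperties}(5) and Lemma~\ref{localy}) to showing $V_\omega = U$, suppose not, take $A = U^\Lambda$ for $\Lambda$ large, note $\Hom(U,U^\Lambda)\cong S^\Lambda$ satisfies conditions (a) and (b) of Theorem~\ref{products}, and conclude that $U^\Lambda$ has no $\Add(U)$-cover. You are more explicit than the paper in two places where the paper is terse---why triviality of $\Add(U)$ (from $K\subset I$ implying type $1$) is needed to invoke Theorem~\ref{mgset}(3), and why $S\varphi^n\ne S\varphi^{n+1}$ holds---but the underlying argument is identical.
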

\begin{proof}
Let $\Add(U)$ be covering. By Lemma \ref{localy}, it is enough to show that 
the family $(U_i)_{i \geq 1}$, where $U_i = U$ for every $ i \in \mathbb{N}$,  
            is locally $T$-nilpotent. 
Assume the contrary. By Lemma~\ref{Vproperties}(5), $U \neq V_{\omega}$. 
In Theorem \ref{products}, apply  $A = U^I$, where 
$I$ is sufficiently large. Then $\Hom_R(U,U^{I}) \simeq S^{I}$, and 
$S \varphi^{n} \neq S \varphi^{n+1}$ for every $n \in \mathbb{N}$. 
Therefore $U^{I}$ has not an $\Add(U)$-cover.   
\end{proof}

Let  $U$  be an artinian uniserial module. Let $\alpha+1$ be the length of $U$ ($\alpha$ an ordinal number), so 
that the chain of submodules of $U$ is 
$0 < U_1 < \dots < U_\alpha = U$.  
If $U\ne0$ is a uniserial artinian module of length $\alpha+1$, and 
 $\beta:=\min \{\,\gamma\mid \gamma\le\alpha,\ U_\gamma=\ker f, \ker f\ne 0, f\in S\}$,
 then $U$ is not isomorphic to any 
proper factor if and only if $\beta+\alpha>\alpha$.
In fact, if $U$ is isomorphic to a proper factor $U/V$,
then there is an endomorphism $f$ of $U$, surjective and with kernel $V$. Hence,
if $V= U_\gamma$, then $\gamma+\alpha=\alpha$. But $\beta\le\gamma,$ so $\beta+\alpha\le \alpha$.
Conversely, if we don't have $\beta+\alpha > \alpha$, then clearly $\beta+\alpha = \alpha$. Hence if
$g$ is the endomorphism of $U$ with kernel $U_\beta,$ then $g$ is surjective, hence $U/U_\beta\cong U$.

We can finally state the third of the main results of this paper:

\begin{theorem}\label{5.6}
If $U$ is a uniserial artinian module, then $\Add(U)$ is covering if and only if it is closed under direct limit. 
\end{theorem}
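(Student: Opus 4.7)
The plan is to reduce the theorem to two results already proved in the paper (Corollary~\ref{f.g} and Theorem~\ref{lastcases}), and to handle the one remaining case by a short ordinal-arithmetic argument. The implication ``closed under direct limit implies covering'' is the general fact recalled in the introduction (every precovering class closed under direct limits is covering, and $\Add(U)$ is always precovering), so I only need the converse: if $\Add(U)$ is covering, then it is closed under direct limit.

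Write the length of $U$ as $\alpha+1$. If $\alpha$ is a successor ordinal, then $U$ has a maximal submodule and is therefore cyclic (= finitely generated), and Corollary~\ref{f.g} finishes this case. Otherwise $\alpha$ is a limit ordinal. Since $U$ is artinian uniserial, $S=\End(U)$ is local with maximal ideal $I$, so $K\subseteq I$ automatically. I would then split on whether $U_e$ is zero. If $U_e\ne 0$, pick an epimorphism $f\in S$ with $\ker f\ne 0$; then $f\in I\setminus K$, so $K\subsetneq I$, and $\{\ker g\mid g\in J(S)=I\}$ has a least element because $U$ is artinian. These are exactly the standing hypotheses of Section~\ref{5}, so Theorem~\ref{lastcases} yields the result.

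The remaining case is $U_e=0$ with $\alpha$ a limit ordinal. Here I would prove that $\Add(U)$ is automatically closed under direct limit (the covering hypothesis is in fact not needed). By Lemma~\ref{localy} it suffices to show that the family $(U_i)_{i\ge 1}$ with $U_i=U$ is locally $T$-nilpotent, and by item~(4) of Section~\ref{4} this is equivalent to $\beta\omega\ge\alpha$, where $\beta$ is the invariant defined there. If $\beta=\alpha$ this is immediate. Otherwise pick $f\in S$ with $\ker f=U_\beta$; then $f$ is non-monic, hence non-iso since $S$ is local with maximal ideal $I$; and since $U_e=0$ forces every epimorphism of $S$ to be an isomorphism, $f$ must also be non-epi, so $\mathrm{im}(f)=U_{\alpha_0}$ with $\alpha_0<\alpha$ and $\beta+\alpha_0=\alpha$.

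The main technical step, and the only real obstacle, is the ordinal inequality $\alpha_0<\beta\omega$. Assume for contradiction $\alpha_0\ge\beta\omega$, and write $\alpha_0=\beta\omega+\delta$ (ordinal subtraction). Using $\beta+\beta\omega=\beta\cdot(1+\omega)=\beta\omega$, one obtains $\alpha=\beta+\alpha_0=\beta+\beta\omega+\delta=\beta\omega+\delta=\alpha_0$, contradicting $\alpha_0<\alpha$. Hence $\alpha_0<\beta\omega$, and then $\alpha=\beta+\alpha_0<\beta+\beta\omega=\beta\omega$, which completes the proof.
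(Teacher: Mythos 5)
Your proof is correct, and it takes a genuinely different and, in one respect, more careful route than the paper's. The paper's own argument, after reducing to the limit-ordinal case, invokes Theorem~\ref{Ue} (covering forces $U_e=0$ or $U_e=U$) and then dismisses the possibility $U_e=0$ with the sentence ``$\beta+\alpha>\alpha$; as $\alpha$ is limit, this is impossible.'' That sentence is not literally true: when $S$ is a division ring one has $\beta=\alpha$ and hence $\beta+\alpha=\alpha\cdot 2>\alpha$ with $\alpha$ limit, and such modules do occur (Theorem~4.8 of the paper produces self-small artinian uniserials of length $\omega+1$ with $S$ a division ring). So the paper's dichotomy ``$U_e\ne 0$, apply Theorem~\ref{lastcases}'' silently drops a case. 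You instead bypass Theorem~\ref{Ue} entirely, split on $U_e=0$ vs.\ $U_e\ne 0$, and in the $U_e=0$ case prove directly that the constant family is locally $T$-nilpotent — i.e., that $\Add(U)$ is automatically closed under direct limits, with no covering hypothesis — via a short and correct ordinal computation: from $\beta+\alpha_0=\alpha$ with $\alpha_0<\alpha$ and the identity $\beta+\beta\omega=\beta\omega$ you deduce $\alpha_0<\beta\omega$ and hence $\alpha<\beta\omega$, which is exactly item~(4) of Section~\ref{4}. This cleanly absorbs the division-ring subcase ($\beta=\alpha$ gives $\beta\omega>\alpha$ trivially) and makes the $U_e\ne 0$ case a straightforward appeal to Theorem~\ref{lastcases} once one checks the standing hypotheses of Section~\ref{5} ($K\subsetneq I$ and existence of a least kernel in $J(S)$, both immediate from artinianity and $U_e\ne 0$). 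In short: same two main ingredients (Corollary~\ref{f.g} and Theorem~\ref{lastcases}), but a different and tighter way of handling the residual $U_e=0$ case, which in fact repairs the paper's one loose step.
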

\begin{proof}
Let $U$  be an  artinian uniserial module of length $ \alpha  + 1$. As we have said above, we can assume that $\alpha$ is limit (if $\alpha$ is successor, then 
 $U$ is finitely generated and we are done). By Theorem~\ref{Ue},
$\Add(U)$  covering implies that either $U_e = 0$ or
$U_e = U$. We claim that $U_e \neq 0$. 
Otherwise,  by the above remark, $\beta+\alpha>\alpha$. As $\alpha$ is limit, this is impossible. 
Therefore  $U_e \neq 0$ and we can apply  Theorem~\ref{lastcases}. 
\end{proof} 

\section{Examples}\label{6}

In view of Theorem~\ref{5.6}, in this section we give some examples of artinian uniserial modules describing their endomorphism ring. 
For every ordinal $\alpha$, there exist 
 artinian uniserial modules of length $ \alpha + 1$ \cite[Example~3]{Fuchs}.
 
Now let $R$ and $T$ be any two rings and $_TU_R$ be a $T$-$R$-bimodule. Let $\Cal L(_TU)$ be the lattice of all submodules of the left $T$-module $_TU$ and $\Cal L(R_R)$ be the lattice of all right ideals of $R$. There is a mapping $\alpha\colon \Cal L(_TU)\to \Cal L(R_R)$, defined, for every $X\in  \Cal L(_TU)$, by $\alpha(X)=r.ann_R(X)=\{\,r\in R\mid xr=0$ for every $x\in X\,\}$. This mapping $\alpha$ is inclusion reversing, that is, 
$X\subseteq Y$ implies $\alpha(X) \supseteq \alpha(Y)$.

Let $\Cal L_c(_TU)$ be the partially ordered subset of $\Cal L(_TU)$ consisting of all cyclic submodules $Tu$ of the left $T$-module $_TU$, and let $\alpha|_c\colon \Cal L_c(_TU)\to \Cal L(R_R)$ be the restriction of $\alpha$ to $\Cal L_c(_TU)$.

\begin{Lemma} If the module $U_R$ is injective and $T:=\End(U_R)$, then the mapping $\alpha|_c\colon \Cal L_c(_TU)\to \Cal L(R_R)$ is injective. Moreover,  for every $X,Y\in  \Cal L_c(_TU)$, $X\subseteq Y$ if and only if $\alpha(X) \supseteq \alpha(Y)$.\end{Lemma}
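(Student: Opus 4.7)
The plan is to reduce the statement about cyclic $T$-submodules to a statement about single elements, and then invoke the injectivity of $U_R$ to produce the required endomorphism.

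First I would observe that for any $u \in U$, the right annihilator $\alpha(Tu)$ coincides with $\alpha(uR)$, that is, with $r.ann_R(u)$. This is because $u = 1\cdot u \in Tu$, so any $r$ killing everything in $Tu$ in particular kills $u$; conversely, if $ur=0$, then for every $t \in T$ one has $(tu)r = t(ur) = 0$, using that $U_R$ is a right $R$-module via elements of $T$ acting $R$-linearly (that is, exploiting the bimodule structure). Thus the map $\alpha|_c$ can be re-expressed as sending $Tu$ to $r.ann_R(u)$.

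The forward implication of the biconditional ($X\subseteq Y \Rightarrow \alpha(X)\supseteq \alpha(Y)$) is already noted just before the lemma and requires no further argument. For the reverse implication, suppose $X = Tu$, $Y = Tv$ with $r.ann_R(u) \supseteq r.ann_R(v)$. I would define an $R$-homomorphism $\varphi\colon vR \to U_R$ by $\varphi(vr) := ur$. This is well defined precisely because $vr = 0$ forces $r \in r.ann_R(v) \subseteq r.ann_R(u)$, whence $ur = 0$; and it is $R$-linear by construction. Since $vR \subseteq U_R$ and $U_R$ is injective, $\varphi$ extends to some $t \in \Hom_R(U_R,U_R) = T$. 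Then $u = \varphi(v) = t(v) \in Tv$, so $Tu \subseteq Tv$, as required.

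Injectivity of $\alpha|_c$ then follows immediately: if $\alpha(X) = \alpha(Y)$ for $X,Y \in \Cal L_c({}_TU)$, applying the biconditional in both directions gives $X \subseteq Y$ and $Y \subseteq X$, hence $X = Y$.

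I don't expect a serious obstacle: the only non-formal ingredient is the standard use of injectivity to extend a partial homomorphism, and the set-up is already well-suited for this since both $u$ and $v$ live inside the injective module $U_R$. The only small point to be careful about is the direction of the annihilator inclusions and correctly identifying $\alpha(Tu)$ with $r.ann_R(u)$, which relies on $T$ acting by $R$-linear endomorphisms.
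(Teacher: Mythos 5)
Your argument is correct and follows essentially the same route as the paper: identify $\alpha(Tu)$ with $r.ann_R(u)$, use injectivity of $U_R$ to extend the well-defined map $vr\mapsto ur$ on $vR$ to an endomorphism $t\in T$, and conclude $Tu=Ttv\subseteq Tv$. The paper phrases the extension step by first passing through $R_R/r.ann(v)\to R_R/r.ann(u)$, but that is the same homomorphism you build directly on $vR$.
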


\begin{proof} Let us first prove that, for every $X,Y\in  \Cal L_c(_TU)$, $\alpha(X) \supseteq \alpha(Y)$ implies $X\subseteq Y$. If $X,Y\in  \Cal L_c(_TU)$, then $X=Tu$ and $Y=Tv$ for suitable $u,v\in{}_TU_R$. Now $\alpha(X) \supseteq \alpha(Y)$, i.e., $r.ann_R(X)=r.ann_R(Tu)=r.ann_R(u)\supseteq r.ann_R(Y) = 
r.ann_R(Tv)=r.ann_R(v)$. Since
$r.ann (u) \supseteq r.ann (v)$, there is the canonical projection $R_R/r.ann (v) \to R_R/ r.ann (u)$, 
which is a right $R$-module morphism. Hence there is a right $R$-module morphism $vR\to uR$, $v\mapsto u$, which 
extends to the injective right $R$-module $U_R   \to U_R $, so that there is a right $R$-module 
morphism $t\colon U_R\to U_R$ with $t(v)=u$, i.e., an element $t\in T$ with $tv=u$. Therefore 
$X=Tu=Ttv \subseteq Tv=Y$. 

As far as the injectivity of $\alpha|_c$ is concerned, suppose $X,Y\in  \Cal L_c(_TU)$ and $\alpha(X) = \alpha(Y)$. Then $\alpha(X) 
\supseteq \alpha(Y)$ and $\alpha(X) \subseteq \alpha(Y)$, so that $X\subseteq Y$ and $X\supseteq Y$. Therefore 
$X=Y$ and $\alpha|_c$ is an injective mapping.\end{proof}

{\em From the next proposition to the end of Remark~\ref{rem}, we will consider the case of a right noetherian right chain ring $R$ and $U_R$ an injective $R$-module. In the next proposition, we will prove that $U$ turns out to be an artinian uniserial left module over its endomorphism ring $T:=\End(U_R)$. Since $R$ is a right noetherian right chain ring, its right ideals form a well ordered descending chain $R=I_0\supset I_1\supset I_2 \supset \dots\supset I_{\alpha}=0$. }This notation will be kept until the end of Remark~\ref{rem}.

\begin{proposition}\label{6.2} Let $R$ be a right noetherian right chain ring, $U_R$ be an injective module and $T:=\End(U_R)$. Then $_TU$ is an artinian uniserial left $T$-module.\end{proposition}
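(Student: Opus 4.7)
The plan is to exploit the order-reversing injection $\alpha|_c \colon \Cal L_c(_TU) \to \Cal L(R_R)$ provided by the previous lemma, transferring the two hypotheses on $R$ (right chain and right noetherian) into the two properties we need for $_TU$ (uniserial and artinian). Since $R$ is right chain, $\Cal L(R_R)$ is totally ordered by inclusion, so the image of $\alpha|_c$ is totally ordered, and the biconditional part of the lemma ($X \subseteq Y \Leftrightarrow \alpha(X) \supseteq \alpha(Y)$) then forces $\Cal L_c(_TU)$ itself to be totally ordered. From comparability of cyclic submodules, I would deduce that $_TU$ is uniserial by the standard symmetric-difference argument: if submodules $M, N$ were incomparable, picking $m \in M \setminus N$ and $n \in N \setminus M$ would give comparable cyclic submodules $Tm, Tn$, and either inclusion contradicts the choice of $m$ and $n$.

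For artinianity, I would use that $R$ right noetherian gives ACC on $\Cal L(R_R)$. Since $\alpha|_c$ reverses order, this translates into DCC on $\Cal L_c(_TU)$: no infinite strictly descending chain of cyclic $T$-submodules can exist. The remaining step is to promote DCC from cyclic submodules to arbitrary submodules. I would argue by contradiction: given a hypothetical strictly descending chain $M_1 \supsetneq M_2 \supsetneq \cdots$ in $\Cal L(_TU)$, choose $u_i \in M_i \setminus M_{i+1}$. Then $Tu_i \subseteq M_i$ but $u_i \notin M_{i+1} \supseteq Tu_{i+1}$, so in particular $Tu_i \not\subseteq Tu_{i+1}$; comparability (from uniseriality just established) forces $Tu_{i+1} \subsetneq Tu_i$, producing an infinite strictly descending chain of cyclic submodules and contradicting the DCC already proved.

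The main obstacle is the last passage: promoting DCC from cyclic submodules to arbitrary ones. The previous lemma is stated only for cyclic submodules, since the inverse image map $\alpha$ need not be injective on $\Cal L(_TU)$ in general, so the transfer has to be made internally using uniseriality rather than by invoking the lemma a second time. Everything else reduces to a careful bookkeeping of the biconditional in the preceding lemma together with the two hypotheses on $R$.
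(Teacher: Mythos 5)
Your proposal is correct and takes essentially the same approach as the paper: both reduce the claim, via the preceding lemma, to showing that $\Cal L_c(_TU)$ is a linearly ordered artinian poset. The only difference is that the paper then cites \cite[Theorem 10.20]{libro} for the implication ``linearly ordered, artinian poset of cyclic submodules $\Rightarrow$ uniserial artinian module,'' whereas you prove that implication directly (comparability of cyclic submodules gives comparability of arbitrary submodules, and DCC lifts from cyclic to arbitrary submodules by the standard choice of $u_i \in M_i \setminus M_{i+1}$); both arguments are sound.
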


\begin{proof} By the previous Lemma, $\Cal L_c(_TU)$ is order antiisomorphic to a partially ordered subset of $\Cal L(R_R)$. Now $\Cal L(R_R)$ is a linearly ordered noetherian set, so that $\Cal L_c(_TU)$ is a linearly ordered artinian set. From this, it follows that $_TU$ is  uniserial artinian \cite [Theorem 10.20] {libro}. 
\end{proof}

In the proof of the previous Proposition, notice that if $\Cal L(_TU)$ is isomorphic to an ordinal $\alpha$, then $\Cal L_c(_TU)$ consists of all ordinals $\beta<\alpha$ that are not limit ordinals or are $0$. 

\begin{proposition} Let $R$ be a right noetherian right chain ring, $U_R$ be the injective module of the unique simple right $R$-module and $T:=\End(U_R)$. Then the artinian linearly ordered set $\Cal L(_TU)$ is antiisomorphic to the noetherian linearly ordered set $\Cal L(R_R)$.\end{proposition}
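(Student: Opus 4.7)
The plan is to construct mutually inverse inclusion-reversing maps between the two lattices. Let $\alpha\colon \Cal L(_TU)\to \Cal L(R_R)$ be the right annihilator map $V\mapsto r.ann_R(V)$ already introduced in this section, and define $\beta\colon \Cal L(R_R)\to \Cal L(_TU)$ by $\beta(I):=\{u\in U\mid uI=0\}$; the latter is a left $T$-submodule because the left $T$-action commutes with the right $R$-action, and both $\alpha$ and $\beta$ are inclusion-reversing. For $\alpha\circ\beta=\operatorname{id}_{\Cal L(R_R)}$, the inclusion $I\subseteq \alpha(\beta(I))$ is trivial; conversely, for $r\in R\setminus I$ the nonzero cyclic right $R$-submodule $(r+I)R$ of $R/I$ surjects onto the unique simple right $R$-module $R/J$ (where $J$ is the maximal right ideal), which embeds into $U=E(R/J)$. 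The composite is a homomorphism $(r+I)R\to U$ sending $r+I$ to a nonzero element; injectivity of $U_R$ extends it to some $\varphi\colon R/I\to U$, and then $u:=\varphi(1+I)$ lies in $\beta(I)$ with $ur=\varphi(r+I)\neq 0$, whence $r\notin\alpha(\beta(I))$.

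For $\beta\circ\alpha=\operatorname{id}_{\Cal L(_TU)}$, the key observation is that for every $0\neq u\in U$ the right ideal $r.ann_R(u)$ equals $I_\delta$ for some \emph{successor} ordinal $\delta$: the cyclic module $uR\cong R/r.ann_R(u)$ is a nonzero submodule of $U$, and since $R/J$ is essential in $U$ it must contain a copy of $R/J$, so $R/r.ann_R(u)$ has a simple submodule, which forces $r.ann_R(u)$ to admit an immediate successor from above in the chain and hence to carry a successor index. Now take $V\in \Cal L(_TU)$ and $u\in \beta(\alpha(V))$ with $u\neq 0$. By uniseriality of $_TU$ either $Tu\subseteq V$ (and then $u\in V$), or $V\subsetneq Tu$. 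In the latter case every nonzero $v\in V$ satisfies $Tv\subsetneq Tu$, so by the preceding lemma on cyclic submodules $r.ann_R(v)\supsetneq r.ann_R(u)$, whence $\alpha(V)=\bigcap_{v\in V} r.ann_R(v)\supseteq r.ann_R(u)$; combined with $u\cdot\alpha(V)=0$, this forces $\alpha(V)=r.ann_R(u)$.

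It remains to derive a contradiction from $V\subsetneq Tu$. If $V=Tv_0$ is cyclic, then $r.ann_R(v_0)=r.ann_R(u)$ and the preceding lemma gives $Tv_0=Tu$, contradicting $V\subsetneq Tu$. If $V$ is non-cyclic, then the cyclic submodules $\{Tv\mid 0\neq v\in V\}$ have no maximum, so the set of successor ordinals $D:=\{\delta(v)\mid 0\neq v\in V\}$ (defined by $r.ann_R(v)=I_{\delta(v)}$) has no maximum, making $\lambda:=\sup D$ a limit ordinal; continuity of the well-ordered chain $\Cal L(R_R)$ at limits then gives $\alpha(V)=\bigcap_{v}I_{\delta(v)}=I_\lambda$, contradicting the key observation applied to $u$, since $r.ann_R(u)=\alpha(V)$ cannot simultaneously carry successor and limit index. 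The principal obstacle is precisely the key observation together with its dual consequence for non-cyclic $T$-submodules; once both are in hand, the argument closes and yields the desired antiisomorphism.
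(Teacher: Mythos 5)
Your proof is correct, and it takes a somewhat different route from the paper's. The paper restricts $\alpha$ to the poset $\Cal L_c({}_TU)$ of cyclic $T$-submodules, shows the image of $\alpha|_c$ is exactly the set of right ideals $I$ for which $R_R/I$ has nonzero socle (equivalently, the non-limit elements of $\Cal L(R_R)^{\mathrm{op}}$), and then concludes via a quick order-theoretic observation: an order-preserving map between two complete well-ordered lattices which restricts to a bijection on the successor (and minimal) elements must itself be an isomorphism, since every element is the supremum of the successor elements below it. You instead make the inverse $\beta(I)=\{u\in U\mid uI=0\}$ explicit and verify both compositions directly: $\alpha\circ\beta=\mathrm{id}$ via injectivity of $U_R$ (extending a nonzero homomorphism $(r+I)R\to U$ to $R/I$), and $\beta\circ\alpha=\mathrm{id}$ by splitting into the cyclic case (handled by injectivity of $\alpha|_c$, as in the paper's preceding lemma) and the non-cyclic case (handled by your ``key observation'' plus continuity of the well-ordered chain at limit ordinals). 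Both proofs hinge on the same essential input — that $r.\mathrm{ann}_R(u)$ has an immediate successor for every nonzero $u\in U$, because $U=E(R/J)$ has essential simple socle — but yours replaces the paper's final order-theoretic step with a hands-on analysis of limit ordinals, which is more verbose but leaves nothing implicit. One small point worth making explicit in your case analysis: you should note (as you do implicitly via ``the cyclic $T$-submodules have no maximum'') that all ordinals $\delta(v)$ are strictly below $\lambda=\sup D$, which is what lets you apply the continuity $I_\lambda=\bigcap_{\gamma<\lambda}I_\gamma$ and then contrast a limit index with the necessarily successor index of $r.\mathrm{ann}_R(u)$.
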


\begin{proof} Consider the order reversing mapping $\alpha\colon \Cal L(_TU)\to \Cal L(R_R)$ and its restriction $\alpha|_c\colon \Cal L_c(_TU)\to \Cal L(R_R)$. The image of $\alpha|_c$ consists of all right ideals $I\in \Cal L(R_R)$ such that $R_R/I$ has a non-zero socle (equivalently, an essential socle, because $R_R/I$ is uniserial). These are the right ideals $I\in \Cal L(R_R)$ that have an immediate successor in $\Cal L(R_R)$. If $\Cal L(R_R)$ is order antiisomorphic to an ordinal $\alpha$, then $\alpha|_c\Cal L_c(_TU)$ corresponds to the set of all ordinals $\beta<\alpha$ that are not limit ordinals or are $0$. Hence $\alpha\colon \Cal L(_TU)\to \Cal L(R_R)^{op}$ is an increasing mapping between two well-ordered sets which induces an antiisomorphism between the elements with an immediate predecessor. Thus $\alpha\colon \Cal L(_TU)\to \Cal L(R_R)^{op}$ is an isomorphism of  well-ordered sets.\end{proof} 

\begin{proposition} Let $R$ be a right noetherian right chain ring with a non-zero
 right socle, that is, let $R$ be a ring with $\Cal L(R_R)$ antiisomorphic to $\alpha+1$ for some non-limit ordinal $\alpha$. Let $U_R$ 
be the injective envelope of the unique simple right $R$-module, $T:=\End(U_R)$ and $S:=\End(_TU)$. Then $S$ is 
canonically isomorphic to $R$, that is, every left $T$-module endomorphism of $_TU$ is given by right multiplication 
by a unique element of $R$.
\end{proposition}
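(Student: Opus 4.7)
The plan is to show that the natural map $\rho\colon R\to S$, $r\mapsto \rho_r$ with $\rho_r(u):=ur$, is bijective; combined with the relation $\rho_r\rho_{r'}=\rho_{r'r}$, this yields the canonical anti-identification of $R$ with $S$.  Each $\rho_r$ lies in $S=\End({}_TU)$ because every $t\in T$ is right $R$-linear, so $\rho_r(tu)=t(u)\,r=t(ur)=t\rho_r(u)$.

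To set up both injectivity and a distinguished generator of ${}_TU$, I would first produce $u_0\in U$ with $\rann_R(u_0)=0$.  The right socle of $R$ is the minimum non-zero right ideal $I_\gamma$, which is isomorphic to the unique simple right $R$-module and therefore embeds into $U_R$ as its socle; by injectivity of $U_R$, this embedding extends to a morphism $R_R\to U_R$, which remains injective because $I_\gamma$ is essential in $R_R$.  Let $u_0$ be the image of $1$.  Then $\rho_r=0\Rightarrow u_0r=0\Rightarrow r=0$ establishes injectivity of $\rho$.  Moreover, under the antiisomorphism $\Cal L({}_TU)\leftrightarrow \Cal L(R_R)$ from the preceding proposition, the cyclic submodule $Tu_0$ corresponds to $\rann_R(u_0)=0$, the minimum element of $\Cal L(R_R)$, so $Tu_0$ is the largest cyclic left $T$-submodule of $U$.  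Since $\alpha$ is non-limit, ${}_TU$ has a maximum proper submodule and is itself cyclic, forcing $Tu_0=U$.

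Surjectivity of $\rho$ reduces to the following key claim: for every $v\in U$ with $\lann_T(u_0)\subseteq \lann_T(v)$, one has $v\in u_0R$.  Granting the claim, fix $s\in S$.  The identity $t\,s(u_0)=s(tu_0)$ gives $\lann_T(u_0)\subseteq \lann_T(s(u_0))$, so the claim provides $r\in R$ with $s(u_0)=u_0r$.  Since $U=Tu_0$, for every $u=tu_0\in U$ we then obtain $s(u)=t\,s(u_0)=t(u_0r)=(tu_0)r=ur=\rho_r(u)$, so $s=\rho_r$.

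The main obstacle is the key claim, which I would prove by contrapositive using that $U_R$ is an injective cogenerator.  Suppose $v\in U\setminus u_0R$; the task is to exhibit $t\in T$ with $t(u_0)=0$ and $t(v)\neq 0$.  The class $v+u_0R$ is non-zero in $U/u_0R$, and the cyclic right $R$-submodule $(v+u_0R)R$ admits a simple top $R/I_1$ because $R$ is local with unique simple right module $R/I_1$.  Composing this projection with the inclusion of $R/I_1$ into the socle of $U$ yields a right $R$-linear map $(v+u_0R)R\to U$ that is non-zero on $v+u_0R$; injectivity of $U_R$ extends it to $\varphi\colon U/u_0R\to U$ with $\varphi(v+u_0R)\neq 0$, and $t:=\varphi\circ\pi\in T$, where $\pi\colon U\to U/u_0R$ is the canonical projection, satisfies $t(u_0)=0$ and $t(v)=\varphi(v+u_0R)\neq 0$, contradicting $\lann_T(u_0)\subseteq \lann_T(v)$.
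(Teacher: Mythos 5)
Your proof is correct and takes essentially the same approach as the paper's: produce a generator $u_0$ of $_TU$ with $\rann_R(u_0)=0$ (the paper identifies $U_R$ with $E(R_R)$ and uses $1_R$), then invoke the cogenerator property of $U_R$ to force $s(u_0)\in u_0R$ for every $s\in S$. The paper derives $_TU=T\cdot 1_R$ directly from injectivity of $U_R$ rather than via the lattice antiisomorphism, and it writes endomorphisms of $_TU$ on the right so as to obtain a ring isomorphism (rather than the anti-isomorphism your left-action convention produces), but these differences are presentational rather than mathematical.
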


\begin{proof} 
Since $R_R$ has an essential simple socle, we have that $U_R=E(R_R)$, 
the injective envelope of $R_R$. As a consequence, $_TU$ is a cyclic left $T$-module, generated by $1_R$. Hence every left $T
$-module endomorphism $f$ of $_TU$ is completely determined by the image $(1_R)f$ of $1_R$, which is an 
element of $U$. Assume by contradiction that $ (1_R)f\notin R_R$. Since $U_R$ is the minimal injective cogenerator 
in $\Mod R$, there exists a morphism $t'\colon U_R/R_R\to U_R$ that maps the non-zero element $(1_R)f+ R_R
$ of $U_R/R_R$ to a non-zero element of $U_R$. Hence there exists an endomorphism $t\colon U_R\to U_R$ such 
that $t(1_R)=0$ and $t((1_R)f)\ne 0$. Thus $0\ne t(1_R)f=(t(1_R))f= (0)f=0$, which is a contradiction. The 
contradiction shows that $(1_R)f\in R_R$. Set $r:=(1_R)f$. Then $(x)f=xr$ for every $x\in {}_TU$, because 
every element $x\in {}_TU$ is of the form $t'1_R$ for some $t'\in T$. The element $r\in R$ is unique, because the 
minimal injective cogenerator $U_R$ is faithful.
\end{proof}

\begin{proposition} Let $R$ be a right noetherian right chain ring with a zero right socle, that is, let $R$ be a ring with $\Cal L(R_R)$ antiisomorphic to $\alpha+1$ for some limit ordinal $\alpha$ via the antiisomorphism $\gamma<\alpha\mapsto I_\gamma$. Let $U_R$ be the injective envelope of the unique simple right $R$-module, $T:=\End(U_R)$ and $S:=\End(_TU)$. Then $S$ is canonically isomorphic to $\displaystyle \lim_{\longleftarrow} {}R/I_{\gamma+1}$, the completion of $R$ in the topology on $R$ with basis of neighborwoods of $0$ the ideals $I_{\gamma+1}$, where $\gamma+1$ ranges in the set of non-limit ordinals $\gamma+1\le\alpha$.\end{proposition}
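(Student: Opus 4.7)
The plan is to reduce, for each successor ordinal $\gamma+1\le\alpha$, to the previous proposition applied to the right noetherian right chain ring $\overline R:=R/I_{\gamma+1}$ (which has nonzero right socle $I_\gamma/I_{\gamma+1}$), and then to assemble the resulting maps into the desired isomorphism $S\to\varprojlim R/I_{\gamma+1}$. First I would define, for each such $\gamma+1$, the $T$-$R$-subbimodule $U_{\gamma+1}:=\{u\in U\mid uI_{\gamma+1}=0\}$. By the argument in the proof of Proposition~6.2, $U_{\gamma+1}=Tu_{\gamma+1}$ for any element $u_{\gamma+1}\in U$ with $\rann_R(u_{\gamma+1})=I_{\gamma+1}$, and such a $u_{\gamma+1}$ exists precisely because $\gamma+1$ is a successor ordinal. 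Moreover $U=\bigcup_{\gamma+1<\alpha}U_{\gamma+1}$, since any nonzero $u\in U$ satisfies $R/\rann_R(u)\hookrightarrow U$, so $R/\rann_R(u)$ has nonzero socle and hence $\rann_R(u)$ has an immediate successor in $\Cal L(R_R)$.

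The heart of the argument is the identification $\End({}_TU_{\gamma+1})\cong R/I_{\gamma+1}$. Because $I_{\gamma+1}$ is two-sided and $U_R$ is injective, $U_{\gamma+1}$ is injective as a right $\overline R$-module. It contains the copy $u_{\gamma+1}\overline R\cong\overline R_{\overline R}$, which is essential in $U_{\gamma+1}$ since the unique simple $R$-submodule $\soc(U)$ is contained in $u_{\gamma+1}\overline R$ (any nonzero submodule of $U$ meets $\soc(U)$). Hence $U_{\gamma+1}=E(\overline R_{\overline R})$, the setup of the previous proposition. Injectivity of $U_R$ also guarantees that the restriction map $T\twoheadrightarrow T_{\gamma+1}:=\End((U_{\gamma+1})_R)=\End((U_{\gamma+1})_{\overline R})$ is surjective, so $\End({}_TU_{\gamma+1})=\End({}_{T_{\gamma+1}}U_{\gamma+1})$. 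The previous proposition applied to $\overline R$ then yields a canonical isomorphism sending $g\in\End({}_TU_{\gamma+1})$ to the unique $r+I_{\gamma+1}\in R/I_{\gamma+1}$ with $g(u_{\gamma+1})=u_{\gamma+1}r$.

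Each $f\in S$ preserves every $U_{\gamma+1}$, so restriction gives a ring homomorphism $\rho_{\gamma+1}\colon S\to R/I_{\gamma+1}$. These are compatible with the canonical projections $R/I_{\gamma'+1}\twoheadrightarrow R/I_{\gamma+1}$ (whenever $I_{\gamma+1}\supseteq I_{\gamma'+1}$), because right multiplication by a representative $r\in R$ that realizes $f|_{U_{\gamma'+1}}$ also realizes $f|_{U_{\gamma+1}}$ upon restriction. One therefore obtains $\rho\colon S\to\varprojlim R/I_{\gamma+1}$. Injectivity follows from $U=\bigcup U_{\gamma+1}$. For surjectivity, a coherent family $(r_{\gamma+1}+I_{\gamma+1})$ defines $f\colon U\to U$ by $f(u):=ur_{\gamma+1}$ for any $\gamma+1$ with $u\in U_{\gamma+1}$; the coherency gives well-definedness, and $T$-linearity follows from the fact that elements of $T$ commute with right $R$-multiplication. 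Finally, $\varprojlim R/I_{\gamma+1}$ is by definition the completion of $R$ in the topology whose basis of neighbourhoods of $0$ is the family $\{I_{\gamma+1}\}$, and this topology is Hausdorff because $\alpha$ being a limit ordinal forces $\bigcap_{\gamma+1<\alpha}I_{\gamma+1}=0$.

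The main technical obstacle is verifying that $U_{\gamma+1}$ is really the injective envelope of $\overline R_{\overline R}$: the injectivity of $U_R$ must transfer to injectivity of $U_{\gamma+1}$ as an $\overline R$-module (using that $I_{\gamma+1}$ is two-sided), and one must check essentiality of $u_{\gamma+1}\overline R$ inside $U_{\gamma+1}$ via the unique simple socle of $U$. Once this identification is in place, the proof is a mechanical assembly of the previous proposition with the standard realization of a completion as an inverse limit of quotient rings.
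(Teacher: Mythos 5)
Your proof is correct and follows essentially the same route as the paper: exhaust $U$ by the cyclic $T$-submodules $U_{\gamma+1}=\{\,u\in U\mid uI_{\gamma+1}=0\,\}$, identify $\End({}_TU_{\gamma+1})$ with $R/I_{\gamma+1}$ by reducing to the preceding proposition over $\overline R:=R/I_{\gamma+1}$, and assemble the restriction maps into an isomorphism $S\to\varprojlim R/I_{\gamma+1}$. In a couple of places you are actually more careful than the paper's terse proof: you explicitly check that $U_{\gamma+1}=E(\overline R_{\overline R})$ (injectivity of $U_{\gamma+1}$ over $\overline R$ from $I_{\gamma+1}$ being two-sided, essentiality from the unique simple socle), and you note that $\End({}_TU_{\gamma+1})=\End({}_{T_{\gamma+1}}U_{\gamma+1})$ because the restriction $T\twoheadrightarrow T_{\gamma+1}$ is onto by injectivity of $U_R$ --- a point the paper passes over silently.

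The one genuine gap is the unsupported assertion that ``each $f\in S$ preserves every $U_{\gamma+1}$.'' This is what makes the restriction maps $\rho_{\gamma+1}$ well defined, and it does not follow just from $U_{\gamma+1}$ being a $T$-submodule; it needs to be shown that $U_{\gamma+1}$ is \emph{fully invariant} in ${}_TU$. The paper supplies the argument: by Proposition~6.2, ${}_TU$ is artinian uniserial, so its $T$-submodules are totally ordered by ordinal ``length''; any $f\in S$ maps $U_{\gamma+1}$ onto a quotient of $U_{\gamma+1}$ sitting inside ${}_TU$, and a quotient of a uniserial module of length $\gamma+1$ has length $\le\gamma+1$, forcing $f(U_{\gamma+1})\subseteq U_{\gamma+1}$. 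Add this (or an equivalent) justification, and also note explicitly that the intersection of all $I_{\gamma+1}$ with $\gamma+1<\alpha$ is $I_\alpha=0$ precisely because $\alpha$ is a limit ordinal, which is what makes your inverse limit coincide with the Hausdorff completion claimed in the statement. With those additions the proof is complete.
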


\begin{proof} We know that $_TU$ is a uniserial artinian $T$-module (Proposition~\ref{6.2}). Its $T$-submodules are in one-to-one 
correspondence with the right ideals of $R$. In correspondence to the ideals $I_{\gamma+1}$, there are the cyclic $T
$-submodules $l.ann_U(I_{\gamma+1})$ of $_TU$, and $_TU=\bigcup_{\gamma+1}l.ann_U(I_{\gamma+1})$. Hence 
$$S=\End_T(\bigcup_{\gamma+1}l.ann_U(I_{\gamma+1}),{}_TU) \cong 
 \displaystyle \lim_{\longleftarrow} {}\Hom(_Tl.ann_U(I_{\gamma+1}),{}
_TU). $$ 
Any $T$-module morphism $_T l.ann_U(I_{\gamma+1})\to {}_TU$ 
induces an order preserving mapping of 
$\Cal 
L(ann_U(I_{\gamma+1}))\cong\gamma+1\to\Cal L(_TU)\cong\alpha+1$. It follows that any $T$-module morphism 
$_Tl.ann_U(I_{\gamma+1})\to {}_TU$ has its image contained in $_Tl.ann_U(I_{\gamma+1})$, that is, every 
$_Tl.ann_U(I_{\gamma+1})$ is a fully invariant left $T$-submodule of $_TU$. This proves that every 
$_Tl.ann_U(I_{\gamma+1})$ 
is a right submodule of $U_S$. 
Thus $S\cong \displaystyle \lim_{\longleftarrow} {}\End(_Tl.ann_U(I_{\gamma+1})).$
 Now $l.ann_U(I_{\gamma+1})$ is the minimal injective 
cogenerator for the category $\Mod R/I_{\gamma+1}$, and $R/I_{\gamma+1}$ is a right noetherian right chain ring 
with a non-zero right socle. Therefore $\End(_Tl.ann_U(I_{\gamma+1}))\cong R/I_{\gamma+1}$ by the previous 
proposition.
\end{proof}

\begin{remark}\label{rem}{\rm If $\alpha$ is a non-limit ordinal, the topology on $R$ with basis of neighborwoods of $0$ the ideals $I_{\gamma+1}$, where $\gamma+1$ ranges in the set of non-limit ordinals $\gamma+1\le\alpha$, is the discrete topology. Hence, in this case also, we have that $S$ is canonically isomorphic to $\displaystyle \lim_{\longleftarrow} {}R/I_{\gamma+1}$, the completion of $R$ in the topology on $R$ with basis of neighborwoods of $0$ the ideals $I_{\gamma+1}$, where $\gamma+1$ ranges in the set of non-limit ordinals $\gamma+1\le\alpha$.}\end{remark}

\begin{remark}\label{rem'}{\rm For any right noetherian right chain ring $R$, $_TU$ turns out to be uniserial. The ideals $I_{\gamma+1}$ are the annihilators of the cyclic $T$-submodules of $_TU$. Therefore the topology on the completion $S=\displaystyle \lim_{\longleftarrow} {}R/I_{\gamma+1}$ coincides with the finite topology on $S=\Hom({}_TU, {}_TU)$.}\end{remark}

{\em Now we pass to study the endomorphism ring $S$ of any artinian uniserial module $U_R$ over an arbitrary ring $R$. }

\begin{theorem} Let $U_R$ be a non-zero artinian uniserial module. Then 
$S:=\End(U_R)$ is a local ring, its maximal ideal is $J(S)=\{\,f\in S\mid f(\soc(U_R))=0\,\}$, and $ _SU$ is a $\Sigma$-pure-injective left $S$-module.\end{theorem}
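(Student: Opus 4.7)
My plan is to handle the three assertions in sequence. For the first two, I would recall from the preliminaries of Section~\ref{s2} that for any artinian uniserial $U_R$, the ring $S$ is local with maximal ideal $I := \{\,f\in S \mid f \text{ is not injective}\,\}$. Since $U_R$ is uniserial and artinian, $\soc(U_R)$ is the unique minimal non-zero submodule of $U$; hence for $f\in S$ one has $f(\soc(U_R))=0$ iff $\soc(U_R)\subseteq \ker f$ iff $\ker f \neq 0$ iff $f\in I$. Thus $J(S)=I=\{\,f\in S \mid f(\soc(U_R))=0\,\}$.

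The substantive part is the $\Sigma$-pure-injectivity of $_SU$. I would invoke the classical Zimmermann/Gruson--Jensen characterization: a left $S$-module $M$ is $\Sigma$-pure-injective if and only if the lattice of pp-definable subgroups of $M$ satisfies the descending chain condition. The key observation is that every pp-definable subgroup of $_SU$ is in fact an $R$-submodule of $U_R$. Indeed, a pp-subgroup in one free variable is the first-coordinate projection of the set of solutions $(u,v_1,\dots,v_n)\in U^{1+n}$ of a finite system of equations of the form $s_{i,0}\,u + s_{i,1} v_1 + \cdots + s_{i,n} v_n = 0$ with coefficients $s_{i,j}\in S$. Since each $s_{i,j}$ acts $R$-linearly on $U$, each such equation cuts out the kernel of an $R$-linear map $U^{1+n}\to U$; the solution set of the system is therefore an $R$-submodule of $U^{1+n}$, and its image under the $R$-linear projection to the first coordinate is an $R$-submodule of $U$.

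Because $U_R$ is artinian, the lattice of $R$-submodules of $U$ satisfies DCC, and hence so does the lattice of pp-definable subgroups of $_SU$. By the characterization cited above, $_SU$ is $\Sigma$-pure-injective, completing the proof.

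The main obstacle I anticipate is being careful about the syntax of pp-formulas and verifying the reduction; the conceptual content is simply that, in the $S$-$R$-bimodule $_SU_R$, left multiplication by an element of $S$ is an $R$-endomorphism, so every pp-condition over $S$ carves out an $R$-submodule of $U_R$. Once this is in place, the proof amounts to quoting the DCC characterization of $\Sigma$-pure-injectivity together with the hypothesis that $U_R$ is artinian.
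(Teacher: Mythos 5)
Your proof is correct and follows essentially the same route as the paper. The first two assertions are established by the same elementary argument (injective endomorphisms of an artinian module are automorphisms, so the non-units are exactly those killing the minimal submodule $\soc(U_R)$, which is a two-sided ideal), and for the $\Sigma$-pure-injectivity of $_SU$ the paper simply cites Zimmermann's Example~2.2; your argument via the DCC characterization of $\Sigma$-pure-injectivity, using that pp-definable subgroups of $_SU$ are $R$-submodules of the artinian module $U_R$ because left multiplication by $S$ is $R$-linear, is precisely the content of that citation spelled out in full.
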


\begin{proof} Every injective endomorphism of an artinian module $U_R$ is an automorphism. Therefore the set of all endomorphisms of $U_R$ that are not invertible in $S:=\End(U_R)$ consists of all endomorphisms of $U_R$ with non-zero kernel. Since $U_R$ is artinian, the smallest non-zero submodule of $U_R$ is $\soc(U_R)$. Hence the set of all endomorphisms of $U_R$ that are not invertible in $S:=\End(U_R)$ is $\{\,f\in S\mid f(\soc(U_R))=0\,\}$. This is clearly a two-sided ideal of $S$. Hence it is the Jacobson radical of the local ring $S$. Finally, the left $S$-module $ _SU$ is  $\Sigma$-pure-injective by \cite[Ex.~2.2]{Zimmermann}.\end{proof}

As we mentioned in Section \ref{artinianuniserial}, if
 $U_R$ is an artinian uniserial module, then $\Cal L(U_R)\cong \alpha + 1$ for some ordinal $\alpha$.  
 Let $$U_0=0<U_1=\soc(U_R)<U_2<\dots<U_{\alpha}=U_R$$ be all the submodules of $U_R$. Here $U_1=\soc(U_R)$ is a simple right $R$-module. More generally, $U_{\beta+1}/U_\beta=\soc(U_R/U_\beta)$ is a simple right $R$-module for every ordinal $\beta<\alpha$. All the submodules $U_\beta$ of $U_R$ are fully invariant submodules of $U_R$, i.e., all the submodules $U_\beta$ of $U_R$ are submodules of the left module $_SU$. Thus $\Cal L(U_R)\subseteq \Cal L(_SU)$. The ordinal $\alpha$ is not a limit ordinal if and only if $U_R$ is cyclic, if and only if $U_R\cong R_R/I$ for some right ideal $I$ of $R$. In this case, $S\cong E_I/I$, where $E_I\subseteq R$ is the idealizer of $I$.

\begin{Lemma} For every finite ordinal $n<\alpha $, the left $S$-module $U_{n+1}/U_n$ is semisimple, that is, $U_{n+1}/U_n$ is a left vector space over the division ring $S/J(S)$. In particular, $_SU_\omega={}_S\soc_\omega(U_R)$ is a semiartinian left $S$-module.\end{Lemma}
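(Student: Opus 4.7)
The plan is to prove semisimplicity of the left $S$-module $U_{n+1}/U_n$ by showing that the Jacobson radical $J(S)$ annihilates this factor; since $S$ is local this forces $U_{n+1}/U_n$ to be a module over the division ring $S/J(S)$, i.e.\ a vector space, so automatically semisimple. The ``in particular'' part should then drop out by iterating and taking a directed union, using the fact that the $R$-module socle filtration of the uniserial artinian module $U_R$ climbs precisely through the $U_n$, so $\soc_\omega(U_R)=U_\omega$.

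The technical heart is to check, for each $f\in J(S)$ and each finite $n<\alpha$, that $f(U_{n+1})\subseteq U_n$. Because $J(S)=\{f\in S\mid f(U_1)=0\}$, the kernel of such an $f$ is a submodule of $U_R$ containing $U_1$, hence of the form $U_\beta$ with $\beta\geq 1$. If $\beta\geq n+1$, then $U_{n+1}\subseteq U_\beta=\ker f$ and $f(U_{n+1})=0\subseteq U_n$. Otherwise $\beta\leq n$, so $\beta$ is finite and we may write $n+1=\beta+\vartheta$ with $\vartheta=n+1-\beta\leq n$; applying the submodule correspondence across $f$ in the same way as in the proof of Lemma~\ref{3.1}, we obtain $f(U_{n+1})=f(U_{\beta+\vartheta})=U_\vartheta\subseteq U_n$. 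In either case $f(U_{n+1})\subseteq U_n$, so $J(S)\cdot U_{n+1}\subseteq U_n$ as desired.

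With $J(S)\cdot U_{n+1}\subseteq U_n$ established, $U_{n+1}/U_n$ becomes a module over the division ring $S/J(S)$, hence a vector space, hence a semisimple left $S$-module. For the final assertion, iterating gives for each finite $n$ a filtration $0=U_0\subset U_1\subset\cdots\subset U_n$ of $U_n$ with semisimple $S$-module factors, so ${}_SU_n$ is semiartinian of Loewy length at most $n$. Then ${}_SU_\omega=\bigcup_{n<\omega}U_n$ is a directed union of semiartinian left $S$-submodules, therefore semiartinian itself; the identification $U_\omega=\soc_\omega(U_R)$ is just the elementary fact that the $R$-module socle filtration of an artinian uniserial module ascends through its chain of submodules.

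The only place where anything nontrivial happens is the ordinal-arithmetic argument with the kernel correspondence in the main step; everything else is formal.
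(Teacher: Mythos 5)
Your proof is correct and follows essentially the same route as the paper's: reduce to showing $J(S)U_{n+1}\subseteq U_n$ using that $S$ is local, then verify this for each $f\in J(S)$. The paper dismisses that verification with ``This is trivial,'' whereas you fill it in with a clean case split on $\ker f=U_\beta$ and the kernel--image correspondence of Lemma~\ref{3.1}; you also spell out the directed-union argument for the ``in particular'' clause, which the paper leaves implicit.
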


\begin{proof}In order to prove that $U_{n+1}/U_n$ is semisimple as a left $S$-module, it suffices to show that $J(S)(U_{n+1}/U_n)=0$, that is, that $J(S)U_{n+1}\subseteq U_n$. Equivalently, it is sufficient to prove that if $f$ is an endomorphism of $U_R$ whose kernel contains $U_1$, then $f$ maps $U_{n+1}$ into $U_n$. This is trivial.\end{proof}

\begin{proposition} If $U_R$ is a uniserial artinian right $R$-module such that any non-zero element 
of $S$ is surjective, for example 
 if $\Cal L(U_R)\cong \omega+1$, then 
$S$  is a left noetherian left chain ring.\end{proposition}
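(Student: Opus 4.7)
The plan is to prove three things in succession: (a) $S$ is a domain, (b) the principal left ideals of $S$ are totally ordered by inclusion, and (c) every left ideal of $S$ is principal. Together, (b) gives the left chain property, and (c) gives left noetherian.

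For (a), suppose $fg=0$ with $f,g\in S$ and $g\ne 0$. By hypothesis $g$ is surjective, so $f(U)=fg(U)=0$, forcing $f=0$. Hence $S$ has no zero-divisors.

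For (b) and (c), the key structural input is that a non-zero $f\in S$, being surjective, induces an isomorphism $\bar f\colon U/\ker f\to U$. Consequently, any $g\in S$ whose kernel contains $\ker f$ factors through $\bar f$, and composing with $\bar f^{-1}$ produces an element $h\in S$ with $g=hf$, i.e.\ $g\in Sf$. Given this, (b) is immediate: for any two non-zero $f,g\in S$, the submodules $\ker f$ and $\ker g$ are comparable because $U_R$ is uniserial; whichever kernel is smaller gives containment of principal left ideals.

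For (c), fix a non-zero left ideal $L\le{}_SS$. The set $\{\ker f\mid f\in L\setminus\{0\}\}$ is a family of submodules of the artinian module $U_R$, hence has a minimal element $\ker f_0$. Since submodules of $U_R$ are linearly ordered, every non-zero $g\in L$ satisfies $\ker g\supseteq \ker f_0$, so by the factorization argument above, $g\in Sf_0$. Thus $L=Sf_0$, every left ideal is principal, and $S$ is left noetherian. The main (minor) subtlety is making sure the factorization lives inside $S$ rather than just in $\Hom_R$, which is exactly where surjectivity of $f_0$ is used to identify $U/\ker f_0$ with $U$.
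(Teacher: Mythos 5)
Your proof is correct, and it takes a somewhat more self-contained route than the paper's. The paper cites \cite[Proposition~1.2(b)]{AM} as a black box for the order-reversing correspondence between non-zero principal left ideals $Sf$ and kernels $\ker f$; from this it first deduces that $S$ is a left chain ring, then rules out non-noetherian behavior by a contradiction: a strictly ascending chain of left ideals would refine to a strictly ascending chain of principal left ideals, hence a strictly descending chain of kernels, contradicting artinianness of $U_R$. You instead reprove the ideal--kernel correspondence directly, and this is exactly where the standing hypothesis that non-zero endomorphisms are surjective enters: it lets you identify $U/\ker f$ with $U$, so that comparability of kernels (uniseriality) really does translate into comparability of principal left ideals inside $S$. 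You then get noetherianness by the stronger direct observation that every non-zero left ideal is principal (pick the generator with minimal kernel, using artinianness). Your domain observation (a) is correct but not needed for the statement. The key ingredients --- uniseriality for comparability of kernels, surjectivity to lift the comparison into $S$, and artinianness of $U_R$ to bound descending chains of kernels --- are the same in both proofs; yours trades the citation for an explicit factorization argument and replaces the contradiction with a constructive ``every left ideal is principal'' step, which arguably makes the role of the surjectivity hypothesis more transparent.
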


\begin{proof}  Since $U_R$ is uniserial, any two non-zero principal left ideals of $S$ are comparable \cite[Proposition~1.2(b)]{AM}. Hence $S$ is a left chain ring. Suppose $S$ not left noetherian. Then there is a strictly ascending chain $I_0\subset I_1\subset I_2\subset \dots$ of left ideals of $S$. But $S$ is left chain, so that there is a strictly ascending chain $Sf_0\subset Sf_1\subset Sf_2\subset \dots$ of principal left ideals of $S$. By \cite[Proposition~1.2(b)]{AM} again, the chain $\ker(f_0)\supset \ker(f_1)\supset \ker(f_2)\supset \dots$ of submodules of $U_R$ is strictly descending. But $\Cal L(U_R)\cong \alpha$ implies $U_R$ artinian, a contradiction.\end{proof}

We conclude with an example of a uniserial artinian module of length $\omega 2+ 1$ whose endomorphism ring $S$ is a  chain domain. 

\begin{example}\label{Alberto's example}
{\rm
Let $\Z_ p$ be the localization of $\Z$ at a maximal ideal $(p)$, $p$
 a fixed prime number. Let $A:=\Q^{(\N)}=\bigoplus_{n\ge 0}\Q a_n$ be a vector space of countable dimension with basis $\{\,a_n\mid n\ge0\,\}$ over the field of fractions $\Q$ of $\Z_p.$
 Thus $A$ is a $\Z_p$-module containing $B:=\Z_p a_0$ as a cyclic $\Z_ p$-submodule.
 Our uniserial module will be $U:=A/B,$ as follows. Let $R$ be the subring of $\End_{\Z_p}(U)$
 consisting of all the endomorphisms of $U_{\Z_p}$ induced by the endomorphisms
  $f\in  \End_{\Z_p}(A) $ such that $f(a_0)\in B$ and $f(a_n)\in \bigoplus_{i =0}^n \Q a_i$ for all
$n\ge 1$.
Then $ _RU$ is a uniserial module of length $\omega 2+1$.

To see that $ _RU$  is uniserial, take  $u, v \in U$ non-zero. For every non-zero $u\in U$, write $u$ in the form $u=\sum_{n\ge 0}q_n a_n +B$ with the coefficients $q_n\in \Q$ almost all zero, and let $\delta(u)$ be the greatest of the indices $n\ge 0$ with $q_n\ne 0$. Without loss of generality, we can suppose $\delta(u)\ge\delta(v)$. Let us prove that $Ru\supseteq Rv$, that is, that $v\in Ru$. Assume that $\delta(u)\ge1$.  The canonical projection of $A$ onto its direct summand $\Q a_{\delta(u)} $ multiplied by $q_{\delta(u)}^{-1} $ is an endomorphism of $A$ that maps $\sum_{n\ge 0}q_n a_n$ to $a_{\delta(u)}$.
Hence this endomorphism of $A$ induces an endomorphism of $U_{Z_p}$, which is an element $r\in R$ such that
 $ru=a_{\delta(u)} + B$.
Now $\delta(u)\ge\delta(v)$ implies that $v$ can be written in the form $v=\sum_{n=0}^{\delta(u)}q'_n a_n +B$. Hence, for every $n=0,1,2,\dots, \delta(u)$, there is an endomorphism of the vector space $A$ that sends $a_{\delta(u)}$ to $q'_na_n$ and all the other elements $a_i$ with $i\ne\delta(u)$ to $0$. This endomorphism of $A$ induces an endomorphism of $U_{Z_p}$, which is an element $r_n\in R$ such that $r_na_{\delta(u)}+B=q'_na_n+B$. It follows that $(\sum_nr_nr)u=\sum_nq'_na_n+B=v$, as desired. Finally, if $\delta(u)=0$, then $\delta(v)=0$, so $u$ and $v$ are in the uniserial $\Z_p$-module $\Q a_0/\Z_p a_0$. But $\Z_p$ is contained in the center of $R$, so that clearly either $Ru\supseteq Rv$ or $Rv\supseteq Ru$.

The proper $R$-submodules of $_RU$ are the subgroups $U_n$ of the Pr\"ufer group $\Q a_0/B$ with $p^n$ elements ($n\ge 0$), and the $R$-submodules $U_{\omega+n}:=\bigoplus_{i= 0}^n\Q a_i/B$ ($n\ge 0$).
Notice that the first  $\omega$
 composition factors $U_{n+1}/U_n$ of $U_R$ are all cyclic groups of order $p$ as abelian groups, and are all isomorphic simple left $R$-modules.
The second
$\omega $ composition factors $U_{\omega+n+1}/U_{\omega+n}$ of $U_R$ are all pair-wise non-isomorphic simple left $R$-modules, though they are all isomorphic, as abelian groups, to
$ \Q.$ Thus endomorphisms of $ _RU$ are only multiplications by elements of $\Z_p,$
 which is in the center of $\End_{\Z_p}(U)$. It follows that $\End(_RU) \cong \Z_ p.$
}
\end{example} 

We conclude with an example of artinian uniserial module of length $\omega + 2$. 

\begin{example}
{\rm 
Consider the abelian group $U:=\Q\oplus\Z(p^\infty)$, and let $R$
 be the endomorphism ring of the abelian group $U$.
 Then $ _RU$ is a left $R$-module. Every element $(q,x)\in \Q\oplus\Z(p^\infty)$ with $q\ne 0$ generates 
$_RU$. It follows that the submodules of $_RU$ are the subgroup ann$_U(p^n)$, for
$ n\ge0$, $\Z(p^\infty)$ and $U$. Thus $ _RU$ is a uniserial artinian module, its lattice of submodules is isomorphic to $\omega+2$. Multiplication by $p^n$  is an epi endomorphism of $_RU$ with kernel ann$_U(p^n)$. Hence
 $U_e=\Z(p^\infty)$. 
}
\end{example}


\begin{thebibliography}{99}
\bibitem{AM} M. Altun-\"Ozarslan and A. Facchini, {\em Relations between endomorphism rings, injectivity, surjectivity and uniserial modules,} J. Algebra Appl., published online, 2019, https://doi.org/10.1142/S0219498820500759
\bibitem{procedinglondonlidia} L. Angeleri H\"ugel, {\em Covers and envelopes via endoproperties of modules,} Proc. London Math. Soc. {\bf 87} (2003), 525--542. 
\bibitem{lidia} L. Angeleri H\"ugel and M. Saor\'in, {\em Modules with perfect decomposition,} Math. Scand. {\bf 98} (2006), 1--25. 
\bibitem{HR} M. Hrbek and P. R\accent23 u\v{z}i\v{c}ka, 
{\em Weakly based modules over Dedekind domains,} J. Algebra {\bf 399} (2014), 251--268.
\bibitem{Chase} S. U. Chase, {\em Direct product of modules,} Trans. Amer. Math. Soc. {\bf 97} (1960), 457--473.
\bibitem {Facchinitransaction} A. Facchini, {\em Krull-Schmidt fails for serial modules,} Trans. Amer. Math. Soc. {\bf 348} (1996), 4561--4576.
\bibitem{libro} A. Facchini, ``Module Theory. Endomorphism rings and direct sum decompositions in some classes of modules'', Progress in Math. {\bf 167}, Birkh\"auser Verlag, Basel, 1998. Reprinted in Modern Birkh\"auser Classics, Birkh\"auser Verlag, Basel, 2010. 
\bibitem{Facsal} A. Facchini and L. Salce, {\em Uniserial modules: sums and isomorphisms of subquotients,} Comm. Algebra {\bf 18}(2) (1990), 499--517.
\bibitem{Fuchs} L. Fuchs, {\em Torsion preradicals and ascending Loewy series of modules,} J. Reine Angew. Math. {\bf 239(240)} (1969), 169--179.
\bibitem{Pavel} P. P\v r\'\i hoda, $\Add(U)$ {\em of a uniserial module,}   Comment. Math. Univ. Carolin. {\bf 47} (2006), 391--398.
\bibitem{St} B. Stenstr\"om, ``Rings of Quotients'', Grundlehren Math. Wiss. {\bf 217}, Springer-Verlag, New York, 1975.
\bibitem {Pacefic} K. Yamagata, {\em The exchange property and direct sums of indecomposable injective modules,} Pacific J. Math. {\bf 55} (1974), 301--317. 
\bibitem {Zelmanowitz} J. M. Zelmanowitz, {\em Radical endomorphisms of decomposable modules,} J. Algebra  {\bf 279}(1) (2004), 135--146. 
\bibitem {Zimmermann} W. Zimmermann, {\em Rein injektive direkte Summen von Moduln,} Comm. Algebra {\bf 5}(10) (1977), 1083--1117.
\end{thebibliography}
\end{document}